\newcommand{\cc}{\mathfrak{c}}
\newcommand{\al}{\alpha}
\newcommand{\PP}{\mathcal{P}}
\newcommand{\QQ}{\mathcal{Q}}
\newcommand{\F}{\mathbb{F}}
\newcommand{\Z}{\mathbb{Z}}
\newcommand{\X}{\mathcal{X}}
\newcommand{\PGU}{\mathrm{PGU}}
\numberwithin{equation}{section}
\theoremstyle{plain}
\newtheorem{thm}[equation]{Theorem}
\newtheorem{cor}[equation]{Corollary}
\newtheorem{lem}[equation]{Lemma}
\newtheorem{prop}[equation]{Proposition}
\theoremstyle{definition}
\newtheorem{defn}[equation]{Definition}
\theoremstyle{remark}
\newtheorem{rem}[equation]{Remark}
\newtheorem{ex}[equation]{Example}
\begin{document}

\title[On a maximal curve with the third largest genus]{Weierstrass semigroups and automorphism group of a maximal curve with the third largest genus}

\author{Peter Beelen} \address{Department of Applied Mathematics and Computer Science, Technical University of Denmark, Kongens Lyngby 2800, Denmark} \email{pabe@dtu.dk} \thanks{}
\author{Maria Montanucci} \address{Department of Applied Mathematics and Computer Science, Technical University of Denmark, Kongens Lyngby 2800, Denmark} \email{marimo@dtu.dk} \thanks{}
\author{Lara Vicino} \address{Department of Applied Mathematics and Computer Science, Technical University of Denmark, Kongens Lyngby 2800, Denmark}  \email{lavi@dtu.dk} \thanks{}

\address{}

\date{}

 \begin{abstract}
In this article we explicitly determine the Weierstrass semigroup at any point and the full automorphism group of a known $\mathbb{F}_{q^2}$-maximal curve $\mathcal{X}_3$ having the third largest genus. This curve arises as a Galois subcover of the Hermitian curve, and its uniqueness (with respect to the value of its genus) is a well-known open problem. Knowing the Weierstrass semigroups may provide a key towards solving this problem. Surprisingly enough $\mathcal{X}_3$ has many different types of Weierstrass semigroups and the set of its Weierstrass points is much richer than the set of $\mathbb{F}_{q^2}$-rational points, as instead happens for all the known maximal curves where the Weierstrass points are known.
We show that a similar exceptional behaviour does not occur in terms of automorphisms, that is, $\mathrm{Aut}(\mathcal{X}_3)$ is exactly the automorphism group inherited from the Hermitian curve, apart from small values of $q$.
\end{abstract}

\maketitle

\vspace{0.5cm}\noindent {\em Keywords}: Maximal curve, Weierstrass semigroup, Weierstrass points

\vspace{0.2cm}\noindent{\em MSC}: Primary: 11G20. Secondary: 11R58, 14H05, 14H55

\vspace{0.2cm}\noindent

\section{Introduction}

An $\mathbb{F}_{q^2}$-maximal curve $\mathcal{X}$ of genus $g$ is a projective, geometrically irreducible, non-singular algebraic curve defined over $\mathbb{F}_{q^2}$ such that the number $|\mathcal{X}(\mathbb{F}_{q^2})|$ of its $\mathbb{F}_{q^2}$-rational points attains the Hasse-Weil upper bound, namely
$$|\mathcal{X}(\mathbb{F}_{q^2})| = q^2+1+2gq.$$
$\mathbb{F}_{q^2}$-maximal curves and especially those with large genus have been intensively investigated during the last decades also in connection with coding theory and cryptography based on Goppa’s
method, see e.g. \cites{Goppa,TV}.

It is well known that the genus $g$ of an $\mathbb{F}_{q^2}$-maximal curve satisfies $g  \leq q(q - 1)/2=:g_1$, see \cite{Y}, and that $g=g_1$ if and only if $\mathcal{X}$ is $\mathbb{F}_{q^2}$-isomorphic to the Hermitian curve
$$\mathcal{H}: x^q+x=y^{q+1},$$
see \cite{RS}.  In \cite{FT} it is proven that either $g \leq \lfloor (q - 1)^2/4 \rfloor=:g_2$, or $g = g_1$. For $q$ odd, $g = g_2$ occurs if and only if $\mathcal{X}$ is $\mathbb{F}_{q^2}$-isomorphic to the
non-singular model of the plane curve of equation
$$\mathcal{X}_2: x^q + x = y^{\frac{q+1}{2}},$$

see \cite{FGT}*{Theorem 3.1}. For $q$ even, a similar (but weaker) result is obtained in \cite{AT}. Indeed the uniqueness of an $\mathbb{F}_{q^2}$-maximal curve of genus $g=g_2$ is ensured under the extra condition that the curve has a particular Weierstrass point. If the extra condition is met, then it is proven in \cite{AT} that $g = q(q - 2)/4$ if and only if $\mathcal{X}$ is $\mathbb{F}_{q^2}$-isomorphic to the non-singular model of the plane curve of equation
$$ \mathcal{Y}_2: x^{\frac{q}{2}} + \ldots + x^2 + x = y^{q+1}.$$
The value of the third largest genus an $\mathbb{F}_{q^2}$-maximal curve has been computed in \cite{KT}, where it is proven that either $g \leq \lfloor(q^2 - q + 4)/6\rfloor=:g_3$, or $g=g_2$ or $g=g_1$. In \cite{KT}*{Remark 3.4} it is shown that $g \leq g_3$ is the best bound possible, as there exist $\mathbb{F}_{q^2}$-maximal curves of genus $g_3$, namely
\begin{itemize}
\item $\mathcal{X}_3: x^{(q+1)/3} + x^{2(q+1)/3} + y^{q+1} = 0$, if $q \equiv 2 \pmod 3$;

\item $\mathcal{Y}_3: y^q - yx^{2(q-1)/3} + x^{(q-1)/3} = 0$, if $q \equiv 1 \pmod 3$; and

\item $\mathcal{Z}_3: y^q + y + (\sum_{i=1}^{t} x^{q/p^i})^2 = 0$, if $q = 3^t$.
\end{itemize}
More precisely, $\mathcal{X}_3, \mathcal{Y}_3$ and $\mathcal{Z}_3$ are the non-singular models of the plane curves given by these equations. All the examples above arise as degree $3$ Galois subcovers of the Hermitian curve $\mathcal{H}_q$. Understanding whether or not these curves are the only $\mathbb{F}_{q^2}$-maximal curves of genus $g_3$ is a well-known open problem.

In the proofs of the uniqueness (up to isomorphism) of $\mathbb{F}_{q^2}$-maximal curves of genus $g_1$ and $g_2$ from \cite{RS} and \cite{FT} the so-called Weierstrass semigroups and Weierstrass points played a crucial role.
These objects occur also naturally in the study of algebraic-geometry (AG) codes \cite{TV}, as the main ingredient to construct one-point AG codes.

Given a point $P$ on an algebraic curve $\mathcal{X}$, the Weierstrass semigroup $H(P)$ is defined as the set of natural numbers $n$ for which there exists a function $f$ on $\mathcal{X}$ having pole divisor $(f)_\infty = nP$.
According to the Weierstrass gap Theorem, see \cite{Sti}*{Theorem 1.6.8}, the set $G(P) := \mathbb{N} \setminus H(P)$ contains exactly $g$ elements called gaps. The structure of $H(P)$ in general varies as $P \in \mathcal{X}$ varies. However, it is known that generically the semigroup $H(P)$ is the same, but that there can exist finitely many points of $\mathcal{X}$, called Weierstrass points, with a different set of gaps.

The intrinsic theoretical interest on these objects arises from St\"ohr-Voloch theory \cite{SV}, where (together with the so-called order sequence) Weierstrass semigroups and points are used to obtain characterizing properties of the curve. Apart from the two characterizations for $g_1$ and $g_2$ mentioned above, important characterization results using Weierstrass points, semigroups and automorphism groups can be found in \cite{FGT}, \cite{FT1} (for the Suzuki curve) and \cite{TT} (for the Ree curve).

In order to provide similar tools for maximal curves of third largest genus, it is natural to wonder whether Weierstrass semigroups and points can be completely determined for maximal curves of genus $g_3$.

In this paper we compute the set the Weierstrass semigroup at every point of the curve $\mathcal{X}_3$ having third largest genus $g_3$ for $q \equiv 2 \pmod 3$, as well as its set of Weierstrass points and its full automorphism group.

In all known examples of maximal curves with large enough genus the set of Weierstrass points coincides with that of rational points, see e.g. \cites{BMZ,BM,BLM,GV}. To understand this behaviour previous investigations found sufficient conditions for a maximal curve to satisfy this property, see \cite{GT1}.

In this paper we show on the contrary that the curve $\mathcal{X}_3$ has a quite large set of non-rational Weierstrass points and many different type of Weierstrass semigroups, providing the first known example with these features. The full automorphism group of $\mathcal{X}_3$ is also computed, as an application of the results mentioned above.

The paper is organized as follows. Section 2 provides the necessary preliminary results on algebraic curves, Weierstrass semigroups and the curve $\mathcal{X}_3$. Section 3 presents two family of special functions in $\mathbb{F}_{q^2}(\mathcal{X}_3)$ and their relations. These functions represent the main ingredient used to compute the Weierstrass semigroups $H(P)$ apart from a few special cases of $P$. Sections 4 and 5 are devoted to the proofs of the main theorems of the paper, namely the description of the Weierstrass semigroup at $\mathbb{F}_{q^2}$-rational and not $\mathbb{F}_{q^2}$-rational points of $\mathcal{X}_3$, respectively. In Section 6 the full automorphism group of $\mathcal{X}_3$ is computed as an application of the obtained results on Weierstass semigroups of $\mathcal{X}_3$. 

\section{Preliminaries}
\label{sec:preliminaries}
In this section, we deal with the preliminary notions and results that will be needed throughout the paper. In the first subsection, we recall the definition of the curve $\X_3$ and we focus on some particular rational functions defined on it, computing their principal divisors. In the second subsection, we collect some preliminaries on regular differentials. In particular, we compute a canonical divisor and prove Corollary \ref{cor:maintoolgaps}, that we will need in Sections \ref{sec:rational} and \ref{sec:nonrational}.

\subsection{The curve \texorpdfstring{$\mathcal{X}_3$}{X3}}

Let $q$ be a prime power such that $q \equiv 2 \ (\mathrm{mod} \ 3)$ and define $m:=\frac{q+1}{3}$. Let $\F_{q^2}$ be the finite field with $q^2$ elements and denote by $p$ the characteristic of $\F_{q^2}$. As before, let $\mathcal{X}_3$ be the non-singular model of the plane curve with affine equation
\begin{equation}\label{eq:planecurvemodel}
y^{q+1} + x^{2m} + x^m = 0.
\end{equation}
The function field $\F_{q^2}(\X_3)$ can be described as $\F_{q^2}(x,y)$, with $y^{q+1} + x^{2m} + x^m = 0$, and it is easy to see that $\F_{q^2}(\X_3)/\F_{q^2}(x)$ is a Kummer extension of degree $q+1$.

\begin{rem}
The point $(0,0)$ is a singular point of the curve defined by equation \eqref{eq:planecurvemodel}. Considering what we have just observed on the desingularization $\X_3$ and its function field, it is then easy to see that there are exactly $m$ distinct places centered on $(0,0)$ in $\F_{q^2}(\X_3)$, and we denote as $P_{0}^1,\ldots ,P_{0}^m$ the distinct points of $\X_3$ that are the centers of such places.
The point at infinity of the plane curve is singular as well. Also for this point there are exactly $m$ distinct places centered on it in $\F_{q^2}(\X_3)$.
We denote by $P_{\infty}^1,\ldots ,P_{\infty}^m$ the distinct points of $\X_3$ that are the centers of these $m$ places.
\end{rem}

Consider the $\F_{q^2}$-model of the Hermitian curve $\mathcal{H}$ given by
\begin{equation*}
    \mathcal{H}: u^{q+1} + v^{q+1} + 1 =0
\end{equation*}
and let $\F_{q^2}(\mathcal{H})$ be the function field of $\mathcal{H}$, that can be described as $\F_{q^2}(u,v)$, with $u^{q+1} + v^{q+1} + 1 =0$. The Hermitian curve is a nonsingular plane curve of genus $g(\mathcal{H})=q(q-1)/2$. The curve $\mathcal{X}_3$ is $\F_{q^2}$-maximal of genus $g(\X_3):=\frac{q^2-q+4}{6}$ and it is $\F_{q^2}$-covered by the Hermitian curve $\mathcal{H}$ via a morphism $\varphi$ of degree $3$. More precisely, the pull-back map
\begin{equation*}
    \varphi^*: \F_{q^2}(\X_3) \longrightarrow \F_{q^2}(\mathcal{H})
\end{equation*}
defines a Galois extension $\F_{q^2}(\mathcal{H})/\F_{q^2}(\X_3)$ of degree $3$, with $x:=u^3$ and $y:=uv$. In particular, the Galois group of the extension is generated by the automorphism
\begin{equation}
    \label{eq:tau}
    \tau: (u,v) \longmapsto (\zeta_3 u , \zeta_3^{2} v),
\end{equation}
where $\zeta_3$ is a primitive cube root of unity in $\F_{q^2}$.

\begin{rem}
\label{rem:unramified}
The extension $\F_{q^2}(\mathcal{H})/\F_{q^2}(\X_3)$ is unramified: indeed, by the Hurwitz genus formula, it holds
\begin{equation*}
\mathrm{deg} \ \mathrm{Diff} (\F_{q^2}(\mathcal{H})/\F_{q^2}(\X_3)) =  2g(\mathcal{H}) - 2 -3 (2g(\X_3) - 2)=0,
%
\end{equation*}
which means precisely that the extension $\F_{q^2}(\mathcal{H})/\F_{q^2}(\X_3)$ is unramified.
As a consequence of this fact, if $Q$ is a point of $\mathcal{H}$ lying over the point $P$ of $\X_3$, then for any $f\in \F_{q^2}(\X_3)$,  it holds that $v_Q(f) = v_P(f)$.
\end{rem}

For convenience, we define ${\mathcal O}_0:=\{P_{0}^1,\ldots ,P_{0}^m\}$ and ${\mathcal O}_\infty:=\{P_{\infty}^1,\ldots ,P_{\infty}^m\}$.
Throughout the paper, we denote with $P_{(a,b)}\in \X_3(\overline{\F}_{q^2})$ a point of $\X_3$ not in the set of points ${\mathcal O}_0 \cup {\mathcal O}_\infty$, with $(x,y)$-coordinates $(a,b)$. With this convention in mind, in particular, the $x$-coordinate $a$ of such a point $P_{(a,b)}$ is nonzero.
Analogously, we denote with $Q_{(A,B)}\in \mathcal{H}$ a point of $\mathcal{H}$ with $(u,v)$-coordinates $(A,B)$ and not lying over any point of $\X_3$ in the set ${\mathcal O}_0 \cup {\mathcal O}_\infty$. The points in the set ${\mathcal O}_0 \cup {\mathcal O}_\infty$ turn out to lie in the same orbit under the action of the full automorphism group of $\X_3$. In fact, the orbit turns out to be given by:
\begin{equation}
    \label{eq:O}
    \mathcal{O}:={\mathcal O}_0 \cup {\mathcal O}_\infty \cup {\mathcal O}_m,
\end{equation}
where ${\mathcal O}_m:=\{P_{(a,0)} \mid a^m + 1= 0\}.$
It will be convenient to establish some of these results already now, so that we will be able to determine the Weierstrass semigroups of points in this orbit efficiently. In Section \ref{sec:automorphisms}, we will then continue the study of automorphisms on $\X_3$.

\begin{lem} \label{lem:someauto}
The automorphism group $\mathrm{Aut}(\mathcal{X}_3)$ contains a subgroup $G$ of order $2(q+1)^2$ which is isomorphic to a semidirect product of an abelian group $A$ of order $(q+1)^2/3$ and a symmetric group of order $6$. More precisely,
$$A:=\{\theta_{\gamma,\delta}(x,y)=(\gamma x, \delta y) \mid \gamma^m=\delta^{q+1}=1\},$$
while the symmetric group of order $6$ is generated by the involution $\theta_2$ and the order $3$ automorphism $\theta_3$ given by
$$\theta_2(x,y)=\bigg( \frac{1}{x}, \frac{y}{x} \bigg) \text{ and } \theta_3(x,y)=\bigg(\frac{y^3}{x^2}, \frac{y}{x}\bigg).$$
\end{lem}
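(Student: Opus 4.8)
The plan is to verify directly that each of the listed maps is an $\mathbb{F}_{q^2}$-automorphism of $\mathcal{X}_3$, then to check the asserted group-theoretic structure. First I would treat $A$: for $\theta_{\gamma,\delta}(x,y)=(\gamma x,\delta y)$ with $\gamma^m=\delta^{q+1}=1$, substituting into the defining equation $y^{q+1}+x^{2m}+x^m=0$ gives $\delta^{q+1}y^{q+1}+\gamma^{2m}x^{2m}+\gamma^m x^m = y^{q+1}+x^{2m}+x^m$, so the equation is preserved; since $\gamma$ ranges over the $m$-th roots of unity and $\delta$ over the $(q+1)$-th roots of unity (all lying in $\mathbb{F}_{q^2}$ because $m\mid q+1$ and $q+1\mid q^2-1$), $A$ is an abelian group isomorphic to $\Z/m\Z\times\Z/(q+1)\Z$, of order $m(q+1)=(q+1)^2/3$.

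Next I would check $\theta_2$ and $\theta_3$. For $\theta_2(x,y)=(1/x,\,y/x)$: plugging in yields $(y/x)^{q+1}+(1/x)^{2m}+(1/x)^m$, and multiplying by $x^{2m}=x^{q+1}$ (using $2m\cdot\tfrac{3}{2}$... more simply $3m=q+1$, so $x^{2m}\cdot x^m=x^{q+1}$) — concretely, $x^{q+1}\big((y/x)^{q+1}+x^{-2m}+x^{-m}\big)=y^{q+1}+x^m+x^{2m}=0$, so $\theta_2$ preserves $\mathcal{X}_3$; one checks $\theta_2^2=\mathrm{id}$. For $\theta_3(x,y)=(y^3/x^2,\,y/x)$: here it is cleanest to pull back to the Hermitian curve via $x=u^3$, $y=uv$, where $\theta_3$ should correspond to a linear map on $(u,v)$ (e.g.\ $(u,v)\mapsto(v,\text{something})$); I would identify that lift explicitly, confirm it preserves $u^{q+1}+v^{q+1}+1=0$ and has order $3$, and deduce that $\theta_3$ is a well-defined order-$3$ automorphism of $\mathcal{X}_3$. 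The relation $\theta_2\theta_3\theta_2=\theta_3^{-1}$ (or $\theta_2\theta_3$ of order $2$) is then a short computation showing $\langle\theta_2,\theta_3\rangle\cong S_3$.

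Finally I would establish the semidirect product structure. The key point is that $A$ is normalized by both $\theta_2$ and $\theta_3$: a direct substitution shows $\theta_2\theta_{\gamma,\delta}\theta_2=\theta_{\gamma',\delta'}$ and $\theta_3\theta_{\gamma,\delta}\theta_3^{-1}=\theta_{\gamma'',\delta''}$ for suitable roots of unity, so $A\rtimes S_3$ is a subgroup of $\aut(\mathcal{X}_3)$. To see that $A\cap\langle\theta_2,\theta_3\rangle$ is trivial and hence $|G|=|A|\cdot 6=2(q+1)^2$, I would note that every nontrivial element of $A$ fixes the two "axes" $x=0$ and the fibre structure in a way incompatible with the action of $\theta_2,\theta_3$ on the sets $\mathcal{O}_0,\mathcal{O}_\infty,\mathcal{O}_m$ (indeed $\theta_2$ swaps $\mathcal{O}_0\leftrightarrow\mathcal{O}_\infty$ while every $\theta_{\gamma,\delta}$ preserves each of these sets), so no nontrivial word in $\theta_2,\theta_3$ can lie in $A$; this yields the claimed order and structure.

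The main obstacle I expect is pinning down $\theta_3$: verifying it is a genuine automorphism (well-definedness at the singular/infinite places of the plane model, and that it has exact order $3$) is most transparent after lifting to $\mathcal{H}$ and identifying the corresponding linear substitution in $(u,v)$, so the bookkeeping of how $\theta_2,\theta_3$ and the lift interact with the covering $\varphi$ and with $\tau$ from \eqref{eq:tau} is the delicate part; once the lifts are in hand, all remaining verifications are routine substitutions.
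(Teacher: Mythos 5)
Your proposal is correct and takes essentially the same route as the paper: direct verification that each map preserves $y^{q+1}+x^{2m}+x^m=0$, the relation $\theta_2\theta_3\theta_2=\theta_3^{-1}$ giving $\langle\theta_2,\theta_3\rangle\cong S_3$, explicit normalization of $A$, and trivial intersection yielding $A\rtimes S_3$ of order $2(q+1)^2$. Your two variations — verifying $\theta_3$ by lifting to $\mathcal{H}$ (the lift is the projectivity $(u,v)\mapsto(\zeta_3 v/u,\,\zeta_3^{-1}/u)$, not an affine-linear map) and justifying $A\cap\langle\theta_2,\theta_3\rangle=\{1\}$ via the action on $\mathcal{O}_0,\mathcal{O}_\infty,\mathcal{O}_m$ — are sound and merely make explicit what the paper does, or asserts, by direct computation.
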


\begin{proof}
By direct computation it can be checked that $\langle A,\theta_2,\theta_3 \rangle$ is an automorphism group of $\mathcal{X}_3$, that is, all the maps presented in the lemma preserve the equation $y^{q+1}+x^m+x^{2m}=0$.
The group $T$ generated by $\theta_2$ and $\theta_3$ is isomorphic to the symmetric group of order $6$ as $\theta_2 \theta_3 \theta_2=\theta_3^2$, by direct computation. Both $\theta_2$ and $\theta_3$ normalize $A$, since a direct computation shows $\theta_2 \theta_{\gamma,\delta} \theta_2=\theta_{\gamma^{-1},\delta \gamma^{-1}}$ and $\theta_3 \theta_{\gamma,\delta} \theta_3^{-1}=\theta_{\gamma \delta^{-3},\gamma \delta^{-2}}.$ Hence $T$ normalizes $A$. Since $T$ and $A$ have trivial intersection, we hence obtain that $\langle A,T \rangle = A \rtimes T$.
\end{proof}

Next let us determine divisors of several elementary function in $\F_{q^2}(\X_3)$. We denote as $D_\infty$ the divisor
\begin{equation*}
    D_\infty:=\sum_{j=1}^{m}P_{\infty}^j.
\end{equation*}

Since $\mathcal{X}_3$ is $\mathbb{F}_{q^2}$-maximal, from the Fundamental Equation \cite{HKT}*{Page xix (ii)} it follows in particular that, for all $i=1,\ldots,m$ and $P_{(a,b)} \in \mathcal{X}_3$, there exists a function $f_{P_{(a,b)},i} \in \F_{q^2}(\X_3)$ such that
\begin{equation} \label{fundeq}
(f_{P_{(a,b)},i})=qP_{(a,b)}+\Phi(P_{(a,b)})-(q+1)P_\infty^i.
\end{equation}
Here $\Phi$ denotes the $\mathbb{F}_{q^2}$-Frobenius map. For a point $P_{(a,b)}$ of $\X_3$, we define the function
\begin{equation*}
    x_a:=\frac{x-a}{a},
\end{equation*}
which, as we will see later, turns out to be a local parameter for $P_{(a,b)}$. Further, let $t_{P(a,b)}$ be the following function in $\F_{q^2}(\X_3)$
\begin{equation}
\label{eq:tangent}
    t_{P_{(a,b)}}:=ma^{m-1}(2a^m+1)(x-a) + b^q(y-b),
\end{equation}
and let $Q_{(A,B)}$ be a point of $\mathcal{H}$ lying over $P_{(a,b)}$.
Note that $t_{P_{(a,b)}}$ is the function associated to the tangent line at $(a,b)$ of the plane curve defined by equation \eqref{eq:planecurvemodel}.

With $\mathcal{O}$ defined as in equation \eqref{eq:O}, for $P_{(a,b)} \in \X_3(\overline{\F}_{q^2}) \setminus \mathcal{O}$, we define
\begin{equation}
\label{eq:alpha}
    \al(P_{(a,b)}):=\frac{a^m}{1+a^m} = \frac{A^{q+1}}{1+A^{q+1}}.
\end{equation}
As $1 - \al(P_{(a,b)})= \frac{1}{1+a^m}$, in particular $1 - \al(P_{(a,b)}) \neq 0$ and we can define the following nonzero function in $\F_{q^2}(\X_3)$, that will be useful later:
\begin{equation}
\label{eq:f0}
    f_0:=\frac{3(1 - \al(P_{(a,b)}))}{A^{q+1}}t_{P_{(a,b)}} = (1 - \al(P_{(a,b)}))\left(\frac{\left(2A^{q+1}+1\right)}{A^3}(x-a) + \frac{3B^q}{A}(y-b) \right),
\end{equation}
where $A^3 = a$ and $AB = b$.
Given a point $P_{(a,b)}\in \X_3(\overline{\F}_{q^2})\setminus \mathcal{O}$ and $Q_{(A,B)}$ a point of $\mathcal{H}(\overline{\F}_{q^2})$ lying over $P_{(a,b)}$, the following proposition describes the local power series expansion of the functions $x_a$ and $f_0$ at $Q_{(A,B)}$, with respect to the local parameter $T:=\frac{u-A}{A}$. In this proposition as well as in the remainder of this article, whenever we write $f=g+O(T^n)$, we mean that $v_{P_{(a,b)}}(f-g) \ge n$.

\begin{prop}
\label{prop:power-series}
Let $P_{(a,b)}\in \X_3(\overline{\F}_{q^2})\setminus \mathcal{O}$ and $Q_{(A,B)}$ a point of $\mathcal{H}$ lying over $P_{(a,b)}$. Consider the functions $x_a$, $f_0$ and $T:=\frac{u-A}{A}$, which is a local parameter at $Q_{(A,B)}$. Then, the local power series expansions of $x_a$ and $f_0$ at $Q_{(A,B)}$ with respect to $T$ are
\begin{equation}
\label{eq:f0:exp}
    \begin{split}
        x_a &= 3T + 3T^2 + T^3,\\
        f_0 &= 3T^2 + (\alpha(P_{(a,b)})+1)T^3 + O(T^{q}),
    \end{split}
\end{equation}
where $\alpha(P_{(a,b)})$ is as defined in equation \eqref{eq:alpha}.
\end{prop}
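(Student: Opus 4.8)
The plan is to do the computation upstairs on $\mathcal{H}$, in the complete local ring $\overline{\F}_{q^2}[[T]]$ at $Q_{(A,B)}$; by Remark~\ref{rem:unramified} valuations, hence the meaning of $O(T^n)$, are unchanged under $\varphi$. From $T=(u-A)/A$ one gets $u=A(1+T)$, and since $x=u^3$ and $a=A^3$ this gives $x=a(1+T)^3$, so that
\[
x_a=\frac{x-a}{a}=(1+T)^3-1=3T+3T^2+T^3,
\]
which is already the first expansion (in fact an exact identity, with no error term). For $f_0$ we need the expansion of $v$, equivalently of $y=uv$. Write $v=B(1+w)$ with $v_{Q_{(A,B)}}(w)\ge 1$. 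Substituting $u=A(1+T)$ and $v=B(1+w)$ into $u^{q+1}+v^{q+1}+1=0$, using the characteristic-$p$ identity $(1+T)^q=1+T^q$ (valid since $q$ is a power of $p$) and cancelling $A^{q+1}+B^{q+1}+1=0$, one is left with
\[
B^{q+1}\bigl(w+w^q+w^{q+1}\bigr)=-A^{q+1}\bigl(T+T^q+T^{q+1}\bigr).
\]
Since $B^{q+1}=-(1+A^{q+1})$, the factor $-A^{q+1}/B^{q+1}$ equals $\al(P_{(a,b)})$, so $w+w^q+w^{q+1}=\al(P_{(a,b)})\bigl(T+T^q+T^{q+1}\bigr)$. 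As $v_{Q_{(A,B)}}(w)\ge1$, the terms $w^q$, $w^{q+1}$, $T^q$, $T^{q+1}$ all have valuation $\ge q$, whence $w=\al(P_{(a,b)})\,T+O(T^{q})$ and $Tw=\al(P_{(a,b)})\,T^2+O(T^{q})$.

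It remains to substitute into \eqref{eq:f0}. Set $c:=A^{q+1}$, so that $\al(P_{(a,b)})=c/(1+c)$ and $1-\al(P_{(a,b)})=1/(1+c)$. Using $x-a=a\,x_a$, and $y-b=uv-AB=b\bigl((1+T)(1+w)-1\bigr)=b(T+w+Tw)$, together with $a=A^3$, $b=AB$ and $B^{q+1}=-(1+c)$, the explicit formula for $f_0$ from \eqref{eq:f0} collapses to
\[
f_0=\frac{1}{1+c}\Bigl[(2c+1)\bigl(3T+3T^2+T^3\bigr)-3(1+c)\bigl(T+w+Tw\bigr)\Bigr].
\]
Now insert $w=\frac{c}{1+c}T+O(T^{q})$ and $Tw=\frac{c}{1+c}T^2+O(T^{q})$; the further contributions of $w$ all lie in $O(T^q)$, so modulo $O(T^q)$ the bracket is a polynomial of degree $\le 3$. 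Collecting terms, its $T$-coefficient cancels, its $T^2$-coefficient reduces to $\frac{3(1+c)}{1+c}=3$, and its $T^3$-coefficient reduces to $\frac{2c+1}{1+c}=1+\al(P_{(a,b)})$, giving
\[
f_0=3T^2+\bigl(\al(P_{(a,b)})+1\bigr)T^3+O(T^{q}),
\]
as claimed.

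There is no genuine obstacle here; the computation is essentially mechanical. The only points requiring a little care are the characteristic-$p$ simplifications, namely $(1+T)^q=1+T^q$ and the relations $q+1\equiv 1\pmod p$ and $a^m=A^{q+1}$ that reconcile the two forms of $f_0$ in \eqref{eq:f0}, and the observation that $w$ is needed only modulo $T^q$, so the nonlinear terms $w^q$, $w^{q+1}$ in the functional equation never have to be solved for explicitly.
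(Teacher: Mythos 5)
Your proof is correct and follows essentially the same route as the paper: expand $x_a$ exactly via $u=A(1+T)$, use the Hermitian relation to determine the expansion of $v$ modulo $T^q$, and substitute into the defining formula for $f_0$. The only (cosmetic) difference is that you normalize multiplicatively, writing $v=B(1+w)$ so that $w=\al(P_{(a,b)})T+O(T^q)$, whereas the paper works with $v-B=-\tfrac{A^{q+1}}{B^q}T+O(T^q)$; the resulting computations are equivalent.
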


\begin{proof}
For convenience, we will simply write $\al$ instead of $\al(P_{(a,b)})$ in this proof.
We start by computing the local power series expansions of the functions $x-a$ and $y-b$ with respect to the local parameter $T:=(u-A)/A$ at $Q_{(A,B)}$. We have:
\begin{equation*}
x_a=\frac{x-a}{a} = \frac{x-A^3}{A^3} =  \frac{u^3 - A^3}{A^3} = \frac{(u-A)^3+3A(u-A)^2+3A^2(u-A)}{A^3}=3T + 3T^2 + T^3
%
\end{equation*}
and
\begin{equation}\label{eq:yminb}
\begin{split}
    y-b = uv - AB &= (u-A)(v-B) + B(u-A) + A(v-B) -AB + AB\\
    &=A(v-B)(T+1) + AB T.
\end{split}
\end{equation}
Moreover, from $v^{q+1} + u^{q+1} +1 = 0$, we obtain
\[        (u-A)^{q+1} -A^{q+1} +A^q u + Au^q + (v-B)^{q+1} - B^{q+1} +B^q v +Bv^q +1 = 0\]
or equivalently
\begin{equation*}
\label{eq:vB1}
       A^{q+1} T^{q+1} +(v-B)^{q+1} + A^{q+1} T^q +B(v-B)^q + A^{q+1} T + B^q(v-B)=0
\end{equation*}
%
which gives $v-B=-\frac{A^{q+1}}{B^q}T+O(T^q)$. Combining this with equation \eqref{eq:yminb}, we obtain
\begin{equation*}
    \begin{split}
        y - b &= A(v-B)(T+1) + ABT=-A\frac{A^{q+1}}{B^q}T(T+1)+ABT+O(T^{q})\\
        &= A\left(B - \frac{A^{q+1}}{B^q}\right) T - \frac{A^{q+2}}{B^q}T^2 + O(T^{q}).
    \end{split}
\end{equation*}
We can now compute also the local power series expansion of the function $f_0$ at $Q_{(A,B)}$ with respect to the local parameter $T$. Using equation \eqref{eq:f0} and the previously computed expansions of $x_a$ and $y-b$, we find
\begin{equation*}
    \begin{split}
        f_0  &= (1 - \al)(3\left(A^{q+1} + 1\right)T^2 + \left(2A^{q+1} + 1\right)T^3) + O(T^{q})\\
             &= 3T^2 + (\al+1)T^3 + O(T^{q}),
    \end{split}
\end{equation*}
where in the final equality we used that $\al=A^{q+1}/(1+A^{q+1}).$
\end{proof}

\begin{prop}
\label{prop:divisors-on-X3}
In the above notations, the principal divisors of the functions $x, x_a, y, y-b$ and $f_0$ in $\F_{q^2}(\X_3)$ are:
\begin{equation}
\label{eq:div:xa}
    (x_a)=\begin{cases}
         & P_{(a,b)} + \sum_{\xi^{q+1}=1, \ \xi \neq 1} P_{(a,\xi b)} - 3D_\infty \quad \mbox{if} \quad a^m\neq -1,\\
       \\[1pt]
        & (q+1)P_{(a,0)} - 3D_\infty \quad \mbox{if} \quad a^m = -1,\\
    \end{cases}
\end{equation}
and
\begin{equation}
\label{eq:divs}
\begin{split}
    (x) &=  3\sum_{i=j}^{m}P_{0}^j - 3D_\infty,\\
    (y) &= \sum_{j=1}^{m}P_{0}^j + \sum_{a^m+1=0}P_{(a,0)} - 2D_\infty,\\
    (y-b) &= P_{(a,b)}+E_{b} - 2D_\infty,\\
\end{split}
\end{equation}
where $E_{b}\in \mathrm{Div}(\X_3)$ is an effective divisor of degree $2m-1$. Moreover, if $P_{(a,b)}\in \X_3(\overline{\F}_{q^2})\setminus \mathcal{O}$ and $\alpha(P_{(a,b)})\neq -1$, then $P_{(a,b)} \not \in \mathrm{supp}(E_b)$. Further, for $P_{(a,b)}\in \X_3(\overline{\F}_{q^2})\setminus \mathcal{O}$, let $f_0$ be the function defined in equation \eqref{eq:f0}. Then
\begin{equation}
\label{eq:div:f0}
(f_0) = 2P_{(a,b)} + E_0 - 3D_\infty,
\end{equation}
where $E_0\in \mathrm{Div}(\X_3)$ is an effective divisor such that $P_{(a,b)}\not \in \mathrm{supp}(E_0)$.
\end{prop}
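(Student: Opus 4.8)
The plan is to compute each divisor via the Kummer structure of $\F_{q^2}(\X_3)/\F_{q^2}(x)$, using the known factorization of the defining polynomial and the places over $0$ and $\infty$. First I would pin down the place structure over $x = \infty$: since $\F_{q^2}(\X_3)/\F_{q^2}(x)$ is Kummer of degree $q+1$ with $y^{q+1} = -(x^{2m}+x^m)$, the pole of $x$ has $\gcd(q+1, 2m) = m$ ramification behavior (because $q+1 = 3m$), so there are $m$ places $P_\infty^1,\dots,P_\infty^m$ over $x=\infty$, each with $e = 3$ and with $v_{P_\infty^j}(x) = -3$, $v_{P_\infty^j}(y) = -2$. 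Over $x = 0$ similarly $v = -(x^{2m}+x^m)$ has a zero of order $m$, $\gcd(q+1, m) = m$, giving the $m$ places $P_0^1, \dots, P_0^m$ over $x = 0$, each totally ramified in the sense that $v_{P_0^j}(x) = 3$, $v_{P_0^j}(y) = 1$. These two facts give $(x) = 3\sum_j P_0^j - 3D_\infty$ and the zero part of $(y)$ over $x=0$. For the remaining zeros of $y$: $y = 0$ forces $x^{2m}+x^m = x^m(x^m+1) = 0$, and away from $x=0$ this means $a^m = -1$; at such $P_{(a,0)}$ the place is totally ramified over $x = a$ (since $y^{q+1}$ vanishes to order $q+1$ there by the Kummer equation, as $x^m+1$ has a simple zero), so $v_{P_{(a,0)}}(y) = 1$ — wait, one must check: $y^{q+1} = -x^m(x^m+1)$, near $a$ with $a^m=-1$ the right side has a simple zero, so over $x=a$ the extension is totally ramified, $e = q+1$, and $v_{P_{(a,0)}}(y) = 1$. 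Summing, $(y) = \sum_j P_0^j + \sum_{a^m+1=0} P_{(a,0)} - 2D_\infty$, the pole order $2$ at each $P_\infty^j$ coming from $v_{P_\infty^j}(y) = -2$ established above, consistent with degree: $m + m - 2m = 0$. \checkmark

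For $(x_a)$ with $x_a = (x-a)/a$: the zeros are the places over $x = a$. When $a^m \neq -1$ (and $a \neq 0$), $-(a^{2m}+a^m) \neq 0$, so $y$ takes $q+1$ distinct nonzero values $\xi b$ with $\xi^{q+1} = 1$ at the unramified fibre, giving $\sum_{\xi^{q+1}=1} P_{(a,\xi b)}$; when $a^m = -1$ the fibre is totally ramified as above, giving $(q+1)P_{(a,0)}$. The pole is $-3D_\infty$ in both cases since $v_{P_\infty^j}(x_a) = v_{P_\infty^j}(x) = -3$. For $(y-b)$: $y - b$ vanishes at $P_{(a,b)}$ with order $1$ (as $T$, equivalently $x_a$, is a local parameter and the expansion in Proposition~\ref{prop:power-series} shows $v_{P_{(a,b)}}(y-b) \ge 1$; equality holds generically, and I will need the argument below to handle it uniformly), and its pole part is $-2D_\infty$ from $v_{P_\infty^j}(y) = v_{P_\infty^j}(b) $... here $b \neq 0$ so $v_{P_\infty^j}(y - b) = v_{P_\infty^j}(y) = -2$. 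By degree count the remaining zeros form an effective divisor $E_b$ of degree $2m - 1$. The claim $P_{(a,b)} \notin \mathrm{supp}(E_b)$ when $P_{(a,b)} \notin \mathcal{O}$ and $\alpha(P_{(a,b)}) \neq -1$ amounts to showing $v_{P_{(a,b)}}(y-b) = 1$ exactly; I would get this from the tangent-line function $t_{P_{(a,b)}}$ in \eqref{eq:tangent}: the coefficient of $(y-b)$ there is $b^q \neq 0$, and $v_{P_{(a,b)}}(t_{P_{(a,b)}}) \ge 2$ by the smoothness of $\X_3$ at $P_{(a,b)}$ together with $v_{P_{(a,b)}}(x_a) = 1$; since the $(x-a)$-coefficient $ma^{m-1}(2a^m+1)$ is nonzero exactly when $2a^m + 1 \neq 0$, i.e. $\alpha \neq -1$ (as $\alpha = a^m/(1+a^m)$, so $\alpha = -1 \iff 2a^m = -1$), one deduces $v_{P_{(a,b)}}(y-b) = v_{P_{(a,b)}}(x-a) = 1$ in that case.

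Finally, for $(f_0)$: by Proposition~\ref{prop:power-series}, the $T$-expansion of $f_0$ is $3T^2 + (\alpha+1)T^3 + O(T^q)$, and since $T = (u-A)/A$ pulls back (via the unramified extension $\F_{q^2}(\mathcal{H})/\F_{q^2}(\X_3)$, Remark~\ref{rem:unramified}, so valuations agree) to a local parameter at $P_{(a,b)}$, the leading term $3T^2$ gives $v_{P_{(a,b)}}(f_0) = 2$ exactly — hence $P_{(a,b)} \notin \mathrm{supp}(E_0)$. The pole part: $f_0 = (1-\alpha)\big(\tfrac{2A^{q+1}+1}{A^3}(x-a) + \tfrac{3B^q}{A}(y-b)\big)$, and $x - a$ has poles only at $D_\infty$ of order $3$, while $(3B^q/A)(y-b)$ — here $B = b/A$, $A^3 = a$ — has $v_{P_\infty^j} \ge -2 + v_{P_\infty^j}(B^q/A)$; I need the combination to have pole order exactly $3$ at each $P_\infty^j$, which follows because the $(x-a)$-term dominates (pole order $3$ versus at most $2 + $ something from the other term, after checking $v_{P_\infty^j}(y/x) \geq -2 - (-3) = 1 > 0$, so $(y-b)/(x-a)$ has no pole at $P_\infty^j$). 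So $(f_0) = 2P_{(a,b)} + E_0 - 3D_\infty$ with $E_0 \ge 0$ of degree $3m - 2$ not containing $P_{(a,b)}$.

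The main obstacle is the uniform control of the ramification over the special fibres $x = 0$ and $x = a$ with $a^m = -1$, and — more subtly — making sure the "exact order" claims ($v_{P_{(a,b)}}(y-b) = 1$ when $\alpha \neq -1$, $v_{P_{(a,b)}}(f_0) = 2$ always) are genuinely forced and not merely generic; for these I lean on the explicit tangent-line function and on the power-series expansion of Proposition~\ref{prop:power-series}, whose leading coefficients $3$ and $3$ are units in characteristic $p \neq 3$ (which holds since $q \equiv 2 \pmod 3$ implies $3 \nmid q$, hence $p \neq 3$).
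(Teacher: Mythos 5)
Your proposal is correct and follows essentially the same route as the paper: the Kummer structure of $\F_{q^2}(\X_3)/\F_{q^2}(x)$ for the divisors of $x$, $x_a$ and $y$, the condition $2a^m+1\neq 0$ (equivalently $\alpha(P_{(a,b)})\neq -1$) as the only obstruction to $v_{P_{(a,b)}}(y-b)=1$, and Proposition \ref{prop:power-series} together with the triangle inequality for $f_0$. The only cosmetic difference is that you detect the condition $2a^m+1\neq 0$ via the tangent-line function $t_{P_{(a,b)}}$, whereas the paper reads it off from the derivative of $x^{2m}+x^m+b^{q+1}$ having $a$ as a multiple root; the two arguments are interchangeable (and both proofs share the same harmless imprecision of asserting pole order exactly $3$ for $f_0$ at $D_\infty$, which fails when $\alpha(P_{(a,b)})=-1$ but does not affect the stated form of the divisor).
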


\begin{proof}
To find the divisors of $x_a$ and $x$, observe that $\F_{q^2}(\X_3)/\F_{q^2}(x)$ is a Kummer extension of degree $q+1$. Then, it is sufficient to note that the zeros of $x^m + 1$ are totally ramified in this extension, while, the zero and the pole of $x$ have ramification index $3$. No further ramification occurs as $y^{q+1} = -x^m(x^m + 1)$. This equation also gives the divisor of $y$. It is not clear that the divisor of $y-b$ is of the form as stated in the proposition, but it might happen that $P_{(a,b)} \in \mathrm{supp}(E_b)$. In this case, the polynomial $f(x):=x^{2m}+x^m+b^{q+1}$ would have $a$ as a multiple root. Since $3f'(x)=x^{m-1}(2x^{m}+1)$ and $P_{(a,b)} \not\in \mathcal{O}$, this can only happen if $2a^m+1=0$. Using that $\alpha(P_{(a,b)})+1=(2a^m+1)/(a^m+1)$, the result on the divisor of $y-b$ follows.

Finally, from equation \eqref{eq:f0:exp}, we know that $v_{P_{(a,b)}}(f_0)= 2$ and, as $f_0$ is a linear combination of $x_a$ and $y-b$, by the triangle inequality we also know that $v_{P_{\infty}^j}(f_0)= -3$ and that $f_0$ has no poles outside the $P_{\infty}^j$, $1\leq j\leq m$. Hence, equation \eqref{eq:div:f0} follows.
\end{proof}

\begin{cor} \label{cor:orbitO}
Let $\mathcal{O}$ be the set defined in equation \eqref{eq:O}. Then $|\mathcal{O}|=q+1$ and $\mathcal{O}$ is an orbit of the automorphism group $G$ defined in Lemma \ref{lem:someauto}, in its natural action on the points of $\X_3$.
\end{cor}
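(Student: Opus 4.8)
The plan is to prove the two assertions of the corollary — that $|\mathcal{O}|=q+1$, and that $\mathcal{O}$ is $G$-stable with $G$ acting transitively on it — essentially independently, the only real inputs being the divisor computations of Proposition~\ref{prop:divisors-on-X3}. For the cardinality, I would first note that $\mathcal{O}_0$, $\mathcal{O}_\infty$ and $\mathcal{O}_m$ are pairwise disjoint, as they consist respectively of the places of $\X_3$ lying over the place $x=0$, over the place $x=\infty$, and over the (distinct, nonzero, finite) places $x=a$ with $a^m+1=0$ of $\F_{q^2}(x)$. By the description of the places of $\X_3$ centered at $(0,0)$ and at the point at infinity recalled at the beginning of Section~\ref{sec:preliminaries} one has $|\mathcal{O}_0|=|\mathcal{O}_\infty|=m$; and since $\gcd(q+1,p)=1$, the polynomial $x^m+1$ has $m$ distinct roots, each totally ramified in $\F_{q^2}(\X_3)/\F_{q^2}(x)$ by \eqref{eq:div:xa} and hence the $x$-coordinate of a unique point $P_{(a,0)}$ of $\X_3$. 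Thus $|\mathcal{O}_m|=m$ and $|\mathcal{O}|=3m=q+1$.

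Next I would describe how the generators of $G$ from Lemma~\ref{lem:someauto} move the three pieces. Each $\theta_{\gamma,\delta}\in A$ fixes the places $x=0$ and $x=\infty$, permutes the roots of $x^m+1$, and fixes the locus $\{y=0\}$, so it preserves each of $\mathcal{O}_0$, $\mathcal{O}_\infty$, $\mathcal{O}_m$; and a direct inspection of coordinates gives $\theta_2(\mathcal{O}_0)=\mathcal{O}_\infty$, $\theta_2(\mathcal{O}_\infty)=\mathcal{O}_0$, $\theta_2(\mathcal{O}_m)=\mathcal{O}_m$. The crux is the behaviour of $\theta_3$, which I claim realizes the $3$-cycle $\mathcal{O}_m\to\mathcal{O}_0\to\mathcal{O}_\infty\to\mathcal{O}_m$. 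To see this I would read off from Proposition~\ref{prop:divisors-on-X3} that the pair $(v_P(x),v_P(y))$ equals $(3,1)$, $(-3,-2)$ and $(0,1)$ for $P$ in $\mathcal{O}_0$, $\mathcal{O}_\infty$ and $\mathcal{O}_m$ respectively, and then compute $v_{\theta_3(P)}(x)=v_P(y^3/x^2)=3v_P(y)-2v_P(x)$ together with $v_{\theta_3(P)}(y)=v_P(y/x)=v_P(y)-v_P(x)$. This yields $x(\theta_3(P))=\infty$ for $P\in\mathcal{O}_0$; $y(\theta_3(P))=0$ with $x(\theta_3(P))$ finite nonzero for $P\in\mathcal{O}_\infty$; and $x(\theta_3(P))=0$ for $P\in\mathcal{O}_m$. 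Since the divisors of $x$ and $y$ in Proposition~\ref{prop:divisors-on-X3} show that every place with $x=0$ is in $\mathcal{O}_0$, every place with $x=\infty$ is in $\mathcal{O}_\infty$, and every place with $y=0$ and $x\ne 0$ is in $\mathcal{O}_m$, the three computations give $\theta_3(\mathcal{O}_0)\subseteq\mathcal{O}_\infty$, $\theta_3(\mathcal{O}_\infty)\subseteq\mathcal{O}_m$ and $\theta_3(\mathcal{O}_m)\subseteq\mathcal{O}_0$; as $\theta_3$ is injective and the three pieces have equal size $m$, these inclusions are equalities (consistently with $\theta_3^3=\mathrm{id}$). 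In particular every generator of $G$ maps $\mathcal{O}$ to itself, so $\mathcal{O}$ is $G$-stable.

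For transitivity, I would observe that the order-$m$ subgroup $\{\theta_{\gamma,1}\mid\gamma^m=1\}$ of $A$ acts on $\mathcal{O}_m$ by $P_{(a,0)}\mapsto P_{(\gamma a,0)}$, and that this action is transitive because the roots of $x^m+1$ form a single coset of the group of $m$-th roots of unity. Hence for any $P\in\mathcal{O}_m$ the orbit $G\cdot P$ contains $\mathcal{O}_m$, and therefore also $\theta_3(\mathcal{O}_m)=\mathcal{O}_0$ and $\theta_3^2(\mathcal{O}_m)=\mathcal{O}_\infty$; so $G\cdot P=\mathcal{O}$, and combined with $G$-stability this shows $\mathcal{O}$ is a single $G$-orbit. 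The only step requiring care is the $\theta_3$ computation: it rests on using the precise valuations from Proposition~\ref{prop:divisors-on-X3}, in particular the cancellation $3v_P(y)-2v_P(x)=0$ at points of $\mathcal{O}_\infty$ (where $v_P(x)=-3$ and $v_P(y)=-2$), which is what forces $\theta_3$ to carry $\mathcal{O}_\infty$ onto $\mathcal{O}_m$ and so closes up the $3$-cycle; everything else is routine bookkeeping.
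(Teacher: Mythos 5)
Your proof is correct and follows essentially the same route as the paper: both arguments use the explicit generators of $G$ from Lemma \ref{lem:someauto} together with the divisors of $x$ and $y$ from Proposition \ref{prop:divisors-on-X3} to show that $A$ acts transitively on ${\mathcal O}_m$, that $\theta_2$ merges ${\mathcal O}_0$ and ${\mathcal O}_\infty$, and that $\theta_3$ cycles the three pieces. The only cosmetic differences are that you obtain transitivity within ${\mathcal O}_0$ and ${\mathcal O}_\infty$ by transporting the $A$-orbit ${\mathcal O}_m$ via $\theta_3$ rather than by invoking the Kummer Galois group of $\F_{q^2}(\X_3)/\F_{q^2}(x)$ directly, and that you write out the valuation bookkeeping for $\theta_3$ which the paper leaves implicit.
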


\begin{proof}
We observe that the Galois group of the extension $\F_{q^2}(\X_3)/\F_{q^2}(x)$, that is the cyclic group generated by $(x,y) \longmapsto (x,\delta y)$, where $\delta$ is a primitive $(q+1)$-th root of unity, fixes the set ${\mathcal O}_m$ point-wise, while it acts transitively on the sets ${\mathcal O}_0$ and ${\mathcal O}_\infty$.
The group $A$ as defined in Lemma \ref{lem:someauto}, acts transitively on the set ${\mathcal O}_m$ because it maps $x$ to $\gamma x$, where $\gamma^m=1$. The automorphism $\theta_2$ maps $x$ to $1/x$ and hence from equation \eqref{eq:divs} merges the two orbits ${\mathcal O}_0$ and ${\mathcal O}_\infty$ under the action of $G$. The automorphism $\theta_3$ instead acts as a cycle of order $3$ on ${\mathcal O}_0$, ${\mathcal O}_\infty$ and ${\mathcal O}_m$. This can be seen from equation \eqref{eq:divs} and the fact that $\theta_3$ maps $x$ to $y^3/x^2$. As a result, all the three considered sets are merged into one orbit under the action of $G$.
\end{proof}

\begin{rem}
Let $\bar{a}\in \F_{q^2}$ be such that $\bar{a}^m=-1$. Then, the Fundamental Equation \cite{HKT}*{Page xix (ii)} ensures that, for any point $P_{(a,b)}\in \X_3(\overline{\F}_{q^2})\setminus \X_3(\F_{q^2})$, there exists a function $\phi_{P_{(a,b)}}\in \F_{q^2}(\X_3)$ such that
\begin{equation*}
    (\phi_{P_{(a,b)}}) = qP_{(a,b)} + \Phi(P_{(a,b)}) - (q+1)P_{(\bar{a},0)}.
\end{equation*}
Hence, we can consider the following function in $\F_{q^2}(\X_3)$, that will be useful later:
\begin{equation}
\label{eq:F}
    F_{P_{(a,b)}}:=\phi_{P_{(a,b)}} \cdot x_{\bar{a}}.
\end{equation}
By Proposition \ref{prop:divisors-on-X3}, the principal divisor of $F_{P_{(a,b)}}$ is
\begin{equation*}
    (F_{P_{(a,b)}}) = qP_{(a,b)} + \Phi(P_{(a,b)}) - 3\sum_{j=1}^{m}P_{\infty}^j.
\end{equation*}
\end{rem}

\subsection{Regular differentials and gaps}

In this subsection, we recall a fundamental result relating regular differentials on a curve and the gaps at a point of the curve. More specifically, in Lemma \ref{lem:canonicaldiv} we compute a particular canonical divisor on $\X_3$ and, in Corollary \ref{cor:maintoolgaps}, we show how to use this for determining gaps at certain points of the curve.
In particular, Corollary \ref{cor:maintoolgaps} will be crucial for the results in Section \ref{sec:nonrational}.

\begin{prop} \label{prop:villa}\cite{VS}*{Corollary 14.2.5} Let $\mathcal{X}$ be an algebraic curve of genus $g$ defined
over a field $\mathbb K$. Let $P$ be a point of $\mathcal{X}$ and $w$ be a regular differential on $\mathcal{X}$. Then $v_P (w)+1$
is a gap at $P$.
\end{prop}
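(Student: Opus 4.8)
The plan is to prove the contrapositive by means of the residue theorem. Set $n:=v_P(w)$ and suppose, for contradiction, that $n+1$ is \emph{not} a gap at $P$, i.e. $n+1\in H(P)$. Then there is a function $f\in\mathbb{K}(\mathcal{X})$ with pole divisor $(f)_\infty=(n+1)P$, so that $v_P(f)=-(n+1)$ and $v_Q(f)\ge 0$ for every point $Q\ne P$.

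Next I would form the meromorphic differential $\omega:=fw$ and read off its local behaviour. At $P$ we get $v_P(\omega)=v_P(f)+v_P(w)=-(n+1)+n=-1$, so $\omega$ has a pole of order exactly one there and therefore $\mathrm{res}_P(\omega)\ne 0$. At any other point $Q\ne P$ we have $v_Q(\omega)=v_Q(f)+v_Q(w)\ge 0+0=0$, since $f$ is regular away from $P$ and $w$ is a regular differential; hence $\mathrm{res}_Q(\omega)=0$. But the residue theorem on a projective curve asserts $\sum_Q\mathrm{res}_Q(\omega)=0$, which forces $\mathrm{res}_P(\omega)=0$ --- a contradiction. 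Thus $n+1$ must be a gap at $P$.

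I expect the only genuinely delicate point to be the appeal to the residue theorem over the arbitrary ground field $\mathbb{K}$; this is classical once residues of (Weil) differentials on a function field are set up, but it can be sidestepped entirely by a Riemann--Roch argument. Writing $K:=(w)$, which is effective because $w$ is regular and satisfies $v_P(K)=n$, the differential $w$ itself exhibits a regular differential vanishing to order exactly $n$ at $P$; consequently the space of regular differentials vanishing to order $\ge n+1$ at $P$ is a proper subspace of those vanishing to order $\ge n$, giving $\ell(K-(n+1)P)=\ell(K-nP)-1$. Subtracting the two Riemann--Roch equalities $\ell((n+1)P)-\ell(K-(n+1)P)=n+2-g$ and $\ell(nP)-\ell(K-nP)=n+1-g$ then yields $\ell((n+1)P)=\ell(nP)$, i.e. no function has a pole of order exactly $n+1$ at $P$, which is exactly the statement that $n+1\in G(P)$.
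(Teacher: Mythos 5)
Your proposal is correct, but there is nothing in the paper to compare it against: the paper does not prove this proposition at all, it simply cites \cite{VS}*{Corollary 14.2.5} and uses the statement as a black box. Both of your arguments are sound, and the second one is the standard textbook proof. In the Riemann--Roch version, the identification $L(K-D)\cong\{\eta \text{ regular}: (\eta)\ge D\}$ via $f\mapsto fw$ is exactly right, and since $w$ itself realizes vanishing order exactly $n=v_P(w)$, the inclusion $L(K-(n+1)P)\subsetneq L(K-nP)$ is proper; to get the precise drop $\ell(K-(n+1)P)=\ell(K-nP)-1$ you are implicitly using the standard fact that $\ell(D)\le \ell(D-P)+1$, which is worth a word but is entirely routine. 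The residue-theorem version is also valid and arguably more vivid, though, as you note, it leans on the construction of residues of differentials over an arbitrary (possibly imperfect, positive-characteristic) base field; it also silently uses that $P$ is a degree-one place, so that the residue at $P$ lies in $\mathbb K$ and no trace (which could kill a nonzero residue) intervenes --- but the Weierstrass gap sequence is only defined at degree-one places anyway, so this is consistent with the intended reading of the proposition. In short: the paper buys brevity by outsourcing to Villa Salvador; you buy self-containment at the cost of half a page, and your Riemann--Roch route is the cleaner of your two options since it avoids the residue machinery entirely.
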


\begin{lem} \label{lem:canonicaldiv}
The divisor $(q-2)D_\infty$ is canonical. More precisely
$$\left(\frac{ydx}{x(x^m+1)}\right)=(q-2)D_\infty.$$
\end{lem}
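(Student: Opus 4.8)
The plan is to verify directly that the differential $\omega:=\frac{y\,dx}{x(x^m+1)}$ has divisor exactly $(q-2)D_\infty$, by computing its order at every point of $\X_3$ and checking that the total degree equals $2g(\X_3)-2$. Since we already know a great deal about the principal divisors of $x$, $y$, and $x^m+1$ from Proposition \ref{prop:divisors-on-X3}, the valuations of the rational part $\frac{y}{x(x^m+1)}$ are immediate away from the points where the naive count could fail; the real work is in the ramification contribution of $dx$.

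First I would record $\left(\frac{y}{x(x^m+1)}\right)$ using equation \eqref{eq:divs}: we have $(y)=\sum_j P_0^j+\sum_{a^m+1=0}P_{(a,0)}-2D_\infty$, $(x)=3\sum_j P_0^j-3D_\infty$, and $(x^m+1)=\sum_{a^m+1=0}(q+1)P_{(a,0)}-3mD_\infty=(q+1)\sum_{a^m+1=0}P_{(a,0)}-(q+1)D_\infty$ (the last from the Kummer description, as the zeros of $x^m+1$ are totally ramified of index $q+1$). Combining, the $P_0^j$ contributions cancel to give coefficient $1-3=-2$ at each $P_0^j$, the $P_{(a,0)}$ with $a^m+1=0$ get coefficient $1-(q+1)=-q$, and at $D_\infty$ the coefficient is $-2-(-3)-(-(q+1))=q+2$. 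Next I would handle $dx$: since $x$ is a separating variable, $(dx)=\sum_P (v_P(dx))P$, and $v_P(dx)=v_P(x'_t)$ for a local parameter $t$. At the $P_0^j$ and at the $P_{(a,0)}$ with $a^m+1=0$, the function $x$ (or $x-a$) has a zero of order $3$, resp.\ $q+1$, so $dx$ contributes $3-1=2$, resp.\ $(q+1)-1=q$, at those points; at $D_\infty$, $x$ has a pole of order $3$ so $dx$ contributes $-3-1=-4$ at each $P_\infty^j$; at all other points $x-a$ is a local parameter (this is exactly the content of $x_a$ being a local parameter at $P_{(a,b)}$, from Proposition \ref{prop:power-series}), so $dx$ contributes $0$. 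Adding $(dx)$ to $\left(\frac{y}{x(x^m+1)}\right)$: at $P_0^j$ we get $-2+2=0$; at $P_{(a,0)}$ with $a^m+1=0$ we get $-q+q=0$; at $D_\infty$ we get $(q+2)+(-4)=q-2$; everywhere else $0$. Hence $(\omega)=(q-2)D_\infty$.

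Finally I would do the consistency check on degrees: $\deg(q-2)D_\infty=(q-2)m=(q-2)(q+1)/3$, and one verifies $2g(\X_3)-2=2\cdot\frac{q^2-q+4}{6}-2=\frac{q^2-q+4-6}{3}=\frac{q^2-q-2}{3}=\frac{(q-2)(q+1)}{3}$, which matches; this confirms $\omega$ is regular with the claimed divisor and in particular that $(q-2)D_\infty$ is canonical. The main obstacle is getting the ramification of $dx$ right at the special fibers, i.e.\ correctly invoking the Kummer structure so that the totally ramified points (zeros of $x^m+1$ and the point over $x=0$) carry the right exponents, and making sure the cancellations at $P_0^j$ and $P_{(a,0)}$ are exact — everything else is bookkeeping with the divisors already established in Proposition \ref{prop:divisors-on-X3}. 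An alternative, if one prefers to avoid the local computation of $v_P(dx)$ point by point, is to use the general formula for a canonical divisor in a Kummer extension $\F_{q^2}(x,y)/\F_{q^2}(x)$ with $y^{q+1}=-x^m(x^m+1)$, but the direct valuation count above is the cleanest route given what has already been set up.
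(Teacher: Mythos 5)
Your proof is correct and takes essentially the same route as the paper: both compute $(dx)=2\sum_{j}P_0^j+q\sum_{a^m+1=0}P_{(a,0)}-4D_\infty$ from the tame ramification data of the Kummer extension $\F_{q^2}(\X_3)/\F_{q^2}(x)$ and then combine this with the divisors of $x$, $y$ and $x^m+1$ from Proposition \ref{prop:divisors-on-X3}. The concluding degree check against $2g(\X_3)-2$ is a sensible sanity check but not logically needed once the pointwise valuations have been added up.
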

\begin{proof}
The result follows directly from the fact that $\F_{q^2}(\X_3)/ \mathbb{F}_{q^2}(x)$ is a Kummer extension of degree $q+1$. Indeed, as observed in the proof of Proposition \ref{prop:divisors-on-X3}, we have that the zeros of $x^m + 1$ in $\mathbb{F}_{q^2}(x)$ are totally ramified in the extension, the zeros and the poles of $x$ have ramification exponent $3$ and the points that are not zeros of $x^m(x^m + 1)$ split completely.
Hence, we have
$$
(dx)=2\sum_{j=1}^m P_0^j + q\sum_{a^m+1=0}P_{(a,0)} - 4 D_\infty$$
and the claim now follows from direct computation using Proposition \ref{prop:divisors-on-X3}.
\end{proof}

\begin{cor} \label{cor:maintoolgaps}Let $P$ be a point of $\mathcal{X}_3$ not in ${\mathcal O}_\infty$. Then for any $h  \in L((q-2)D_\infty)$, the integer $v_P (h) + 1$ is a gap of Weierstrass semigroup at $P$.
\end{cor}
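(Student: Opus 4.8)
The statement combines Proposition \ref{prop:villa} with Lemma \ref{lem:canonicaldiv}, so the plan is to translate membership in $L((q-2)D_\infty)$ into a statement about the valuation of an associated regular differential. Given $h \in L((q-2)D_\infty)$, the divisor inequality $(h) \ge -(q-2)D_\infty$ holds by definition of the Riemann--Roch space. Lemma \ref{lem:canonicaldiv} exhibits the explicit differential $\omega_0 := \frac{y\,dx}{x(x^m+1)}$ with $(\omega_0) = (q-2)D_\infty$. Therefore the differential $\omega := h\,\omega_0$ has divisor $(\omega) = (h) + (q-2)D_\infty \ge 0$, i.e.\ $\omega$ is a regular differential on $\mathcal{X}_3$.

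The second step is to evaluate $v_P(\omega)$ at the given point $P \notin {\mathcal O}_\infty$ and conclude via Proposition \ref{prop:villa}. Since $(\omega_0) = (q-2)D_\infty$ is supported only on ${\mathcal O}_\infty$, and $P \notin {\mathcal O}_\infty$, we have $v_P(\omega_0) = 0$. Hence $v_P(\omega) = v_P(h) + v_P(\omega_0) = v_P(h)$. Applying Proposition \ref{prop:villa} to the regular differential $\omega$ at the point $P$ yields that $v_P(\omega) + 1 = v_P(h) + 1$ is a gap at $P$, which is exactly the assertion.

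One should be slightly careful about the trivial case $h = 0$: then $\omega = 0$ is not a genuine regular differential and $v_P(h)$ is undefined, so the statement is understood for $h \ne 0$ (which is the only case of interest, since gaps are finite positive integers). Apart from this remark, the argument is essentially immediate once Lemma \ref{lem:canonicaldiv} is in hand: the real content was already absorbed into computing the explicit canonical divisor $(q-2)D_\infty$. The one point deserving a line of justification is the multiplicativity of valuations and the additivity of divisors of products, $(h\omega_0) = (h) + (\omega_0)$, together with the observation that restricting attention to points outside ${\mathcal O}_\infty$ is precisely what makes the contribution of $\omega_0$ to the local valuation vanish; there is no genuine obstacle here.
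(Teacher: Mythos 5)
Your argument is correct and is exactly the paper's proof: both form the differential $\omega = h\,\frac{y\,dx}{x(x^m+1)}$, note it is regular because $(q-2)D_\infty$ is canonical and $(h)\ge -(q-2)D_\infty$, observe $v_P(\omega)=v_P(h)$ for $P\notin\mathcal{O}_\infty$, and invoke Proposition \ref{prop:villa}. Your added remarks on the case $h=0$ and on additivity of divisors are fine but not needed.
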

\begin{proof}
From Proposition \ref{prop:villa} and Lemma \ref{lem:canonicaldiv} it is enough to consider the regular differential
$$w:=\frac{hydx}{x(x^m+1)}.$$
Since $v_P(w)=v_P(h)$, the corollary follows.
\end{proof}

\section{Two families of functions in $\mathbb{F}_{q^2}(\X_3)$}
\label{sec:twofamilies}

The aim of this section is to prove Theorem \ref{thm:fi} and Theorem \ref{thm:gi}, that introduce two families of functions in $\F_{q^2}(\X_3)$ with prescribed vanishing orders in certain points of the curve. These functions will be crucial for the computation of the Weierstrass semigroups at the points of $\X_3 \setminus \mathcal{O}$.

We start by giving the following definition, introducing some functions that will be practical to use in the proofs of Theorem \ref{thm:fi} and Theorem \ref{thm:gi}.

\begin{defn}\label{def:PQ}
Let $i \in \Z$. Further, let $\F$ be a field of characteristic different from three and assume that it contains a primitive cube root of unity, which we will denote by $\zeta_3$. Then we define the following rational functions in $\F(s)$:
\begin{equation*}
    \PP_i(s):=\frac{(s + \zeta_3)^{3i} - (s + \zeta_{3}^2)^{3i}}{3(\zeta_3 - \zeta_{3}^2)s(s-1)}
\end{equation*}
and
\begin{equation*}
    \QQ_{i}(s):=\frac{\left(\frac{1-\zeta_3}{3}\right)(s + \zeta_3)^{3i-1} + \left(\frac{1-\zeta_{3}^2}{3}\right)(s + \zeta_{3}^2)^{3i-1}}{s-1}.
\end{equation*}
\end{defn}
Note that it strictly speaking is not necessary to assume that the field $\F$ contains a primitive cube root of unity. If it does not, the above definition makes sense over the larger field $\F(\zeta_3)$, but actually elementary Galois theory can be used to show $\mathcal{P}_i(s)$ and $\mathcal{Q}_{i}(s)$ are in $\F(s)$.

\begin{ex}\label{ex:PQ}
Assume $\F=\mathbb{Q}$. Then $\PP_0(s)=0$, $\PP_1(s)=1$, $\PP_2(s)=2s^3-3s^2-3s+2$ and $\PP_3(s)=3s^6 - 9s^5 - 9s^4 + 33s^3 - 9s^2 - 9s + 3$. Moreover, $\QQ_1(s)=s+1$, $\QQ_2(s)=s^4 + s^3 - 9s^2 + s + 1$, $\QQ_3(s)=s^7 + s^6 - 27s^5 + 29s^4 + 29s^3 - 27s^2 + s + 1$, and $\QQ_{0}(s)=(s^2-s+1)^{-1}$.
\end{ex}

In fact, as illustrated in this example, for positive values of $i$ the rational functions $\PP_i(s)$ and $\QQ_{j}(s)$ are polynomials in $s$. We investigate this further in the following lemma.
\begin{lem}
\label{lem:PQ:degree}
Let $i \in \Z_{>0}$. Then $\PP_i(s)$ is a nonzero polynomial of degree at most $3i-3$, while $\QQ_{i}(s)$ is a nonzero polynomial of degree $3i-2$.
\end{lem}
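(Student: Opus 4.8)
The plan is to analyze $\PP_i(s)$ and $\QQ_i(s)$ directly from their defining formulas in Definition~\ref{def:PQ}, showing in each case that the apparent denominator actually divides the numerator, and then reading off the degree of the resulting polynomial.

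First I would handle $\PP_i(s)$. The numerator is $N(s):=(s+\zeta_3)^{3i}-(s+\zeta_3^2)^{3i}$, a polynomial of degree at most $3i-1$ (the leading $s^{3i}$ terms cancel). I would check that $s=0$ and $s=1$ are roots of $N(s)$. Indeed $N(0)=\zeta_3^{3i}-\zeta_3^{6i}=1-1=0$, and $N(1)=(1+\zeta_3)^{3i}-(1+\zeta_3^2)^{3i}=(-\zeta_3^2)^{3i}-(-\zeta_3)^{3i}=(-1)^{3i}(1-1)=0$, using $1+\zeta_3=-\zeta_3^2$ and $1+\zeta_3^2=-\zeta_3$. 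Hence $s(s-1)\mid N(s)$, so $\PP_i(s)$ is a polynomial of degree at most $(3i-1)-2=3i-3$. For nonvanishing: the coefficient of $s^{3i-1}$ in $N(s)$ is $3i(\zeta_3-\zeta_3^2)\neq 0$ in characteristic zero (and the statement of the paper implicitly works over a field where $3i\neq 0$, or one simply observes $\PP_i$ has a well-defined leading behaviour); alternatively, and more robustly, evaluate $\PP_i$ at a convenient point, or note that $N(s)$ cannot be identically zero since $(s+\zeta_3)^{3i}\neq(s+\zeta_3^2)^{3i}$ as polynomials (their roots differ). So $\PP_i(s)$ is a nonzero polynomial of degree at most $3i-3$.

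Next, for $\QQ_i(s)$ the numerator is $M(s):=\tfrac{1-\zeta_3}{3}(s+\zeta_3)^{3i-1}+\tfrac{1-\zeta_3^2}{3}(s+\zeta_3^2)^{3i-1}$. Its degree is at most $3i-1$, with leading coefficient $\tfrac{1-\zeta_3}{3}+\tfrac{1-\zeta_3^2}{3}=\tfrac{2-(\zeta_3+\zeta_3^2)}{3}=\tfrac{2-(-1)}{3}=1$, so $\deg M(s)=3i-1$ exactly. I would then check $M(1)=0$: $M(1)=\tfrac{1-\zeta_3}{3}(1+\zeta_3)^{3i-1}+\tfrac{1-\zeta_3^2}{3}(1+\zeta_3^2)^{3i-1}=\tfrac{1-\zeta_3}{3}(-\zeta_3^2)^{3i-1}+\tfrac{1-\zeta_3^2}{3}(-\zeta_3)^{3i-1}=(-1)^{3i-1}\big(\tfrac{1-\zeta_3}{3}+\tfrac{1-\zeta_3^2}{3}\big)=(-1)^{3i-1}$, which is \emph{not} zero. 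So $s-1$ does not divide $M(s)$ on the nose — this means I have miscomputed or, more likely, the cancellation is different. Re-examining: $(-\zeta_3^2)^{3i-1}=(-1)^{3i-1}\zeta_3^{6i-2}=(-1)^{3i-1}\zeta_3^{-2}=(-1)^{3i-1}\zeta_3$, and similarly $(-\zeta_3)^{3i-1}=(-1)^{3i-1}\zeta_3^{3i-1}=(-1)^{3i-1}\zeta_3^{-1}=(-1)^{3i-1}\zeta_3^2$. Thus $M(1)=(-1)^{3i-1}\big(\tfrac{(1-\zeta_3)\zeta_3}{3}+\tfrac{(1-\zeta_3^2)\zeta_3^2}{3}\big)=(-1)^{3i-1}\cdot\tfrac{(\zeta_3-\zeta_3^2)+(\zeta_3^2-\zeta_3^4)}{3}=(-1)^{3i-1}\cdot\tfrac{\zeta_3-\zeta_3^2+\zeta_3^2-\zeta_3}{3}=0$, using $\zeta_3^4=\zeta_3$. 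So indeed $s-1\mid M(s)$, and $\QQ_i(s)=M(s)/(s-1)$ is a polynomial of degree $(3i-1)-1=3i-2$ with leading coefficient $1$, hence nonzero.

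The main obstacle is purely bookkeeping: keeping the powers $\zeta_3^{3i-1}$, $\zeta_3^{6i-2}$ etc.\ straight modulo $3$ (equivalently reducing exponents modulo the order of $\zeta_3$), and being careful with the sign factors $(-1)^{3i}=(-1)^i$ and $(-1)^{3i-1}=-(-1)^i$. There is no conceptual difficulty: once one verifies the claimed roots of the numerators, the degree and nonvanishing claims drop out immediately from degree counting and the explicit leading coefficients. The only subtlety worth a remark is that for $\PP_i$ one should phrase the degree bound as \emph{at most} $3i-3$ (it can be strictly smaller, as $\PP_2$ in Example~\ref{ex:PQ} shows it is exactly $3$ but one could imagine further cancellation for other $i$ — in fact there is none beyond the guaranteed factor, but the lemma only claims the bound), whereas for $\QQ_i$ the degree is exactly $3i-2$ because its leading coefficient is visibly $1$.
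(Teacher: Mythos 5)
Your proposal is correct and follows essentially the same route as the paper: bound the degree of the numerators, verify that $0$ and $1$ (resp.\ $1$) are roots so the denominators divide, and read off the degrees, with your leading-coefficient computation for $\QQ_i$ making explicit what the paper only asserts. The one small divergence is in showing $\PP_i\neq 0$: the paper evaluates the numerator at $s=-\zeta_3$, while your (equally valid) fallback observes that $(s+\zeta_3)^{3i}$ and $(s+\zeta_3^2)^{3i}$ are distinct monic polynomials since $\zeta_3\neq\zeta_3^2$ --- and you are right to discard the leading-coefficient argument $3i(\zeta_3-\zeta_3^2)$, which fails when $p\mid i$.
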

\begin{proof}
It is easy to see that for any $i\in \Z_{> 0}$, the polynomial $\tilde{\PP}_i(s):=(s+\zeta_3)^{3i}-(s+\zeta_3^2)^{3i}$ has at most degree $3i-1$. It is not the zero polynomial, since if $s$ is substituted by $-\zeta_3$, one obtains
\begin{equation}\label{eq:Pnonzero}
\tilde{\PP}_i(-\zeta_3)=0^{3i}-(-\zeta_3+\zeta_3^2)^{3i}=(-1+\zeta_3)^{3i},
\end{equation}
which is not zero, as $\zeta_3 \neq 1.$ Here we used that the field $\F$ does not have characteristic three.
It is easy to see that $\tilde{\PP}_i(0)=0$, while
$$\tilde{\PP}_i(1)=(1+\zeta_3)^{3i}-(1+\zeta_3^2)^{3i}=(-\zeta_3^2)^{3i}-(-\zeta_3)^{3i}=(-1)^{i}-(-1)^{i}=0.$$
We may conclude that $\mathcal{P}_i(s)$ is a polynomial of degree at most $3i-3$.
Similarly, the polynomial $\tilde{\QQ}_{i}(s):=\frac{1-\zeta_3}{3}(s+\zeta_3)^{3i-1}+\frac{1-\zeta_3^2}{3}(s+\zeta_3^2)^{3i-1}$ is a polynomial of degree $3i-1$ having $1$ as a root. Hence $\QQ_{i}(s)$ is a polynomial of degree $3i-2.$
\end{proof}

It is not hard to see that the coefficient of $s^{3i-3}$ of the polynomial $\PP_i(s)$ equals $3i(\zeta_3-\zeta_3^2)$. Hence if the characteristic of the field $\F$, which already is assumed to be distinct from three, is zero or does not divide $i$, then the degree of $\PP_i(s)$ is exactly $3i-3$. Since we will work over the finite field $\F_{q^2}$, where $q \equiv 2 \pmod{3}$, it may well happen that $\deg \PP_i(s) < 3i-3.$

The following lemma gives a relation between the rational functions just introduced that will come in handy later.

\begin{lem}
\label{lem:PQ:identities}
Let $i,j,\ell \in \Z$. Then
\begin{equation}
    \label{eq:id:1}
    \PP_i(s)\PP_{\ell + j}(s)-\PP_j(s) \PP_{\ell+i}(s) =(s^2-s+1)^{3j} \PP_{i-j}(s) \PP_{\ell}(s)
\end{equation}
and
\begin{equation}
    \label{eq:id:2}
    \PP_i(s) \QQ_{\ell+j}(s) -\PP_j(s) \QQ_{\ell+i}(s) =(s^2-s+1)^{3j}\PP_{i-j}(s) \QQ_{\ell}(s).
\end{equation}
\end{lem}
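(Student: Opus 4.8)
The plan is to reduce both identities to a single, purely formal, algebraic identity by a change of variables. Set $X:=(s+\zeta_3)^3$ and $Y:=(s+\zeta_3^2)^3$, regarded as elements of the rational function field $\F(\zeta_3)(s)$. Since $(s+\zeta_3)(s+\zeta_3^2)=s^2+(\zeta_3+\zeta_3^2)s+\zeta_3^3=s^2-s+1$, we get $XY=(s^2-s+1)^3$, so the factor $(s^2-s+1)^{3j}$ occurring on the right-hand sides of \eqref{eq:id:1} and \eqref{eq:id:2} is exactly $(XY)^j$. Writing moreover $h:=3(\zeta_3-\zeta_3^2)\,s(s-1)$, $g_1:=\tfrac{1-\zeta_3}{3(s+\zeta_3)}$ and $g_2:=\tfrac{1-\zeta_3^2}{3(s+\zeta_3^2)}$, and using $(s+\zeta_3)^{3i-1}=X^i/(s+\zeta_3)$ and $(s+\zeta_3^2)^{3i-1}=Y^i/(s+\zeta_3^2)$, the formulas of Definition \ref{def:PQ} become, for every $i\in\Z$,
\[
\PP_i(s)=\frac{X^i-Y^i}{h},\qquad \QQ_i(s)=\frac{g_1X^i+g_2Y^i}{s-1}.
\]
The decisive feature is that the denominators $h$ and $s-1$ and the coefficients $g_1,g_2$ do not depend on $i$.

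Clearing the common denominator $h^2$ in \eqref{eq:id:1} and the common denominator $h(s-1)$ in \eqref{eq:id:2}, one sees that both statements are instances of the single identity
\[
(X^i-Y^i)\bigl(c_1X^{\ell+j}+c_2Y^{\ell+j}\bigr)-(X^j-Y^j)\bigl(c_1X^{\ell+i}+c_2Y^{\ell+i}\bigr)=(XY)^j\,(X^{i-j}-Y^{i-j})\bigl(c_1X^{\ell}+c_2Y^{\ell}\bigr),
\]
valid for all $i,j,\ell\in\Z$ and all scalars $c_1,c_2$: identity \eqref{eq:id:1} is the case $(c_1,c_2)=(1,-1)$, and identity \eqref{eq:id:2} the case $(c_1,c_2)=(g_1,g_2)$. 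Note that $X$ and $Y$ are invertible, so negative exponents cause no difficulty; the identity takes place in $\F(\zeta_3)(s)$, but since all $\PP_i$ and $\QQ_i$ already lie in $\F(s)$ by the remark following Definition \ref{def:PQ}, the conclusion is a genuine identity in $\F(s)$.

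It remains to verify the displayed algebraic identity, which I would do by a direct expansion of the left-hand side. After the subtraction, the "pure" monomials $c_1X^{i+j+\ell}$ and $c_2Y^{i+j+\ell}$ cancel in pairs, and the four surviving mixed monomials regroup as $(c_1X^{\ell}+c_2Y^{\ell})\bigl(X^iY^j-X^jY^i\bigr)$; the elementary factorisation $X^iY^j-X^jY^i=(XY)^j\bigl(X^{i-j}-Y^{i-j}\bigr)$ then produces the right-hand side at once. The only point needing a modicum of care is the bookkeeping in this expansion; there is no genuine obstacle, the one substantive idea being the substitution $X=(s+\zeta_3)^3$, $Y=(s+\zeta_3^2)^3$, after which the whole statement becomes formal.
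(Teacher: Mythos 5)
Your proof is correct and follows essentially the same route as the paper's: both arguments clear denominators, expand, cancel the two ``pure'' monomials, and factor the surviving mixed terms as $(S_1S_2)^{3j}\bigl(S_1^{3(i-j)}-S_2^{3(i-j)}\bigr)$ times an $\ell$-part. Your substitution $X=(s+\zeta_3)^3$, $Y=(s+\zeta_3^2)^3$ and the parameters $(c_1,c_2)$ merely package the paper's computation so that both identities follow from one displayed formula, which is a tidy but not substantively different presentation.
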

\begin{proof}
For convenience, we will simply write $\PP_i$ and $\QQ_j$ instead of $\PP_i(s)$ and $\QQ_i(s)$ in this proof. We prove the second identity only, since the first identity can be proven in a very similar way with simpler looking intermediate expressions. First of all, using Definition \ref{def:PQ} and writing $S_1=s+\zeta_3$, $S_2=s+\zeta_3^2$, one obtains by direct computation
\begin{multline*}
3(\zeta_3-\zeta_3^2)s(s-1)^2\PP_i \QQ_{\ell+j} =\\
\frac{1-\zeta_3}{3}{S_1^{3i+3j+3\ell-1}}+
\frac{1-\zeta_3^2}{3}S_1^{3i}{S_2^{3j+3\ell-1}}
-\frac{1-\zeta_3}{3}{S_2^{3i}S_1^{3j+3\ell-1}}
-\frac{1-\zeta_3^2}{3}{S_2^{3i+3j+3\ell-1}}
\end{multline*}
and
\begin{multline*}
3(\zeta_3-\zeta_3^2)s(s-1)^2\PP_j \QQ_{\ell+i} =\\
\frac{1-\zeta_3}{3}{S_1^{3i+3j+3\ell-1}}+
\frac{1-\zeta_3^2}{3}S_1^{3j}{S_2^{3i+3\ell-1}}
-\frac{1-\zeta_3}{3}{S_2^{3j}S_1^{3i+3\ell-1}}
-\frac{1-\zeta_3^2}{3}{S_2^{3i+3j+3\ell-1}}.
\end{multline*}
Hence
\begin{multline*}
3(\zeta_3-\zeta_3^2)s(s-1)^2(\PP_i \QQ_{\ell+j} -\PP_j \QQ_{\ell+i} )=\\
(S_1S_2)^{3j}\left(\frac{1-\zeta_3}{3}(S_1^{3\ell+3i-3j-1}-S_2^{3i-3j}S_1^{3\ell-1})+\frac{1-\zeta_3^2}{3}(S_1^{3i-3j}S_2^{3\ell-1}-S_2^{3\ell+3i-3j-1})\right)\\
=(S_1S_2)^{3j}(S_1^{3(i-j)}-S_2^{3(i-j)})\left(\frac{1-\zeta_3}{3}S_1^{3\ell-1}+\frac{1-\zeta_3^2}{3}S_2^{3\ell-1}\right)\\
=3(\zeta_3-\zeta_3^2)s(s-1)^2 (s^2-s+1)^{3j} \PP_{i-j} \QQ_{\ell}.
\end{multline*}
For the last equality, note that $S_1S_2=s^2-s+1$.
\end{proof}

\begin{rem}
\label{rem:PQ:nocommon:roots}
For any $i \in \Z_{>0}$, the polynomials $\PP_i(s)$ and $\QQ_i(s)$ have no common roots. Indeed, this is clear for $i=1$, since $\PP_1(s)=1$. If $i \ge 2$, Lemma \ref{lem:PQ:identities} applied with $\ell=0$ and $j=i-1$, implies that $\PP_i(s) \QQ_{i-1}(s)-\PP_{i-1}(s)\QQ_i(s)=(s^2-s+1)^{3i-4}$. Here we used that $\QQ_0(s)=(s^2-s+1)^{-1}.$ Hence the only possible common roots of $\PP_i(s)$ and $\QQ_i(s)$ could be $-\zeta_3$ or $-\zeta_3^2$, the roots of $s^2-s+1$. However, equation \eqref{eq:Pnonzero} implies that $\PP_i(-\zeta_3) \neq 0$ and similarly one sees that $\PP_i(-\zeta_3^2) \neq 0$.
\end{rem}

\begin{rem}
\label{rem:PQ:existence_of_i}
Let $\F=\overline{\F}_{q^2}$ be the algebraic closure of $\F_{q^2}.$ Then for any $\alpha \in \F \setminus \{0,1,-\zeta_3,-\zeta_3^2\}$, there exists $i>0$ such that $\PP_{i+1}(\alpha)=0$. Indeed, for such $\alpha$ one has $\PP_{i+1}(\alpha)=0$ if and only if $((\al+\zeta_3)/(\al+\zeta_3^2))^{3i+3}=1$. Since any nonzero element of $\F$ has a finite multiplicative order, the existence of $i$ follows. Moreover, since $\PP_1(s)=1$, we see that $i>0$.
\end{rem}
This remark motivates the following definition:

\begin{defn}
Let $\alpha \in \overline{\F}_{q^2} \setminus \{0,1,-\zeta_3,-\zeta_3^2\}$. Then we define the $\PP$-order of $\alpha$ as the smallest positive integer $i$ such that $\PP_{i+1}(\alpha)=0$.
\end{defn}

Later we will apply the notion of a $\PP$-order in case $\alpha=\alpha(P_{(a,b)})$. The following lemma is a first source of information in this setting.

\begin{lem}\label{lem:Porder_rational}
Let $i$ be a positive integer. The number of $P_{(a,b)} \in \X_3(\overline{\F}_{q^2}) \setminus \mathcal{O}$ such that $\alpha(P_{(a,b)})$ has $\PP$-order $i$ is equal to $(q+1)^2 \varphi(i+1)$ if $\gcd(i+1,p)=1$ and $0$ otherwise. Here $\varphi(\cdot)$ denotes Euler's totient function. Moreover, $P_{(a,b)} \in  \X_3(\overline{\F}_{q^2}) \setminus \mathcal{O}$ is $\F_{q^2}$-rational if and only if $\al(P_{(a,b)})^2-\al(P_{(a,b)})+1=0$ or its $\PP$-order $i$ satisfies that $i+1$ divides $m$.
\end{lem}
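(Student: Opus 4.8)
The plan is to reduce both assertions to recognizing multiplicative orders of a single element $\beta$ attached to $\alpha:=\alpha(P_{(a,b)})$.

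\emph{Setup.} First I would record that for $P_{(a,b)}\in\X_3\setminus\mathcal{O}$ one has $a\neq0$ and $a^m\neq-1$ (if $a^m=-1$ then $b^{q+1}=-a^{2m}-a^m=0$, putting the point in $\mathcal{O}_m$), so $\alpha=a^m/(1+a^m)\in\overline{\F}_{q^2}\setminus\{0,1\}$ is well defined. Conversely, for any $w_0\in\overline{\F}_{q^2}\setminus\{0,-1\}$ the equation $a^m=w_0$ has exactly $m$ solutions (as $\gcd(m,p)=1$) and, for each, $b^{q+1}=-w_0(w_0+1)\neq0$ has exactly $q+1$ solutions (as $\gcd(q+1,p)=1$), all giving points of $\X_3\setminus\mathcal{O}$ (smooth affine points with $a,b\neq0$). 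Hence $P_{(a,b)}\mapsto a^m$, and likewise $P_{(a,b)}\mapsto\alpha(P_{(a,b)})$ onto $\overline{\F}_{q^2}\setminus\{0,1\}$, has all fibres of cardinality $m(q+1)=(q+1)^2/3$. Next I introduce $\beta:=(\alpha+\zeta_3)/(\alpha+\zeta_3^2)$; since $q\equiv2\pmod3$ forces $\zeta_3\in\F_{q^2}\setminus\F_q$ and $\zeta_3^q=\zeta_3^2$, the assignment $\alpha\mapsto\beta$ is a fractional linear bijection of $\mathbb{P}^1$ over $\overline{\F}_{q^2}$ and, tracking the special values, restricts to a bijection $\overline{\F}_{q^2}\setminus\{0,1,-\zeta_3,-\zeta_3^2\}\to\overline{\F}_{q^2}^{\ast}\setminus\{1,\zeta_3,\zeta_3^2\}$. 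By Remark~\ref{rem:PQ:existence_of_i}, $\PP_{i+1}(\alpha)=0$ iff $(\beta^3)^{i+1}=1$; as $\{1,\zeta_3,\zeta_3^2\}$ is exactly the set of cube roots of unity, $\beta^3\neq1$ whenever the $\PP$-order is defined, so the $\PP$-order of $\alpha$ equals $\mathrm{ord}(\beta^3)-1$.

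\emph{First assertion.} It now suffices to count the $\beta\in\overline{\F}_{q^2}^{\ast}$ with $\mathrm{ord}(\beta^3)=i+1$ and multiply by the fibre size $m(q+1)$. Cubing is a surjective endomorphism of $\overline{\F}_{q^2}^{\ast}$ with kernel of order $3$ (here $p\neq3$), so this number equals $3$ times the number of elements of $\overline{\F}_{q^2}^{\ast}$ of exact order $i+1$, which is $\varphi(i+1)$ if $\gcd(i+1,p)=1$ and $0$ otherwise; moreover $\mathrm{ord}(\beta^3)=i+1\ge2$ automatically excludes $\beta\in\{1,\zeta_3,\zeta_3^2\}$. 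Multiplying by $m(q+1)$ and using $3m(q+1)=(q+1)^2$ yields $(q+1)^2\varphi(i+1)$, respectively $0$, as claimed.

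\emph{Second assertion.} From $\alpha^2-\alpha+1=(a^{2m}+a^m+1)/(1+a^m)^2$, the condition $\alpha^2-\alpha+1=0$ is exactly $a^m\in\{\zeta_3,\zeta_3^2\}$; there $a^m$ has order $3\mid q^2-1$, so $a^{q^2-1}=(a^m)^{3(q-1)}=1$ gives $a\in\F_{q^2}$, and $b^{q+1}=-a^{2m}-a^m=1$ makes $b$ a $(q+1)$-th root of unity, hence $b\in\F_{q^2}$ and $P_{(a,b)}$ is rational. In the complementary case, write $w:=a^m\notin\{\zeta_3,\zeta_3^2\}$, so the $\PP$-order $i$ is defined. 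Using $3m=q+1$, $i+1\mid m$ iff $\mathrm{ord}(\beta^3)\mid m$ iff $\beta^{q+1}=1$; and from $\zeta_3^q=\zeta_3^2$ one computes $\beta^q=(\alpha^q+\zeta_3^2)/(\alpha^q+\zeta_3)$, so $\beta^{q+1}-1=(\zeta_3^2-\zeta_3)(\alpha-\alpha^q)/\big((\alpha+\zeta_3^2)(\alpha^q+\zeta_3)\big)$ with nonzero denominator (since $\alpha,\alpha^q\notin\{-\zeta_3,-\zeta_3^2\}$ here), giving $i+1\mid m$ iff $\alpha\in\F_q$ iff $w\in\F_q$. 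It therefore remains to prove $P_{(a,b)}$ is $\F_{q^2}$-rational iff $w\in\F_q$. If $w\in\F_q$, then $a^{q^2-1}=w^{3(q-1)}=1$ so $a\in\F_{q^2}$, and $b^{q+1}=-w^2-w\in\F_q^{\ast}$ lies in the image of the surjective norm $N_{\F_{q^2}/\F_q}\colon\F_{q^2}^{\ast}\to\F_q^{\ast}$, so $b\in\F_{q^2}$. Conversely, if $P_{(a,b)}$ is rational then $w\in\F_{q^2}$ with $w^{3(q-1)}=a^{q^2-1}=1$ and $-w^2-w=b^{q+1}=N_{\F_{q^2}/\F_q}(b)\in\F_q$, hence $(w^q-w)(w^q+w+1)=0$; if $w^q\neq w$, then $w^q=-w-1$ together with $w^{3q}=w^3$ forces $(w+1)^3+w^3=(2w+1)(w^2+w+1)=0$, so $2w+1=0$ since $w^2+w+1\neq0$, which has no solution in characteristic $2$ (so there $w^q=w$) and gives $w=-1/2\in\F_q$ in odd characteristic. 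Thus $w\in\F_q$ in all cases, and combining the two equivalences proves the second assertion.

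\emph{Main obstacle.} The delicate point is the last step: over $\F_{q^2}$ there genuinely exist $w$ with $w^q+w+1=0$ and $w\notin\F_q$, and they must be excluded by combining the rationality of $a$ (which yields $w^{3(q-1)}=1$) with the standing hypothesis $w^2+w+1\neq0$; everything else is bookkeeping with orders, norms, and cube roots of unity.
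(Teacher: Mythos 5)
Your proof is correct and follows the same overall strategy as the paper's: count the admissible $\alpha$ via the multiplicative order of $\beta^3$, where $\beta=(\alpha+\zeta_3)/(\alpha+\zeta_3^2)$, multiply by the fibre size $m(q+1)=(q+1)^2/3$ of $P_{(a,b)}\mapsto\alpha(P_{(a,b)})$, and characterize $\F_{q^2}$-rationality by the condition $\alpha\in\F_q$, equivalently $\beta^{q+1}=1$. The counting half is essentially identical to the paper's. Where you genuinely diverge is in the rationality criterion. For the implication ``$P$ rational and $\alpha^2-\alpha+1\neq0$ implies $\alpha\in\F_q$'' the paper uses the one-line identity $a^m=(a^{3m}+a^{2m}+a^m)/(a^{2m}+a^m+1)$, whose numerator and denominator are visibly in $\F_q$ because $a^{3m}=a^{q+1}$ and $a^{2m}+a^m=-b^{q+1}$ are norms; this sidesteps entirely the factorization $(w^q-w)(w^q+w+1)=0$ and the characteristic-dependent analysis of $(2w+1)(w^2+w+1)=0$ that you single out as the main obstacle. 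Your route through that obstacle is correct, but the paper's identity buys a much shorter argument. In the other direction, your explicit computation $\beta^{q+1}-1=(\zeta_3^2-\zeta_3)(\alpha-\alpha^q)/\bigl((\alpha+\zeta_3^2)(\alpha^q+\zeta_3)\bigr)$ delivers the equivalence $\beta^{q+1}=1\Leftrightarrow\alpha\in\F_q$ in one stroke, whereas the paper handles the backward implication separately by noting that $(\alpha+\zeta_3)^{q+1}-(\alpha+\zeta_3^2)^{q+1}$ is a degree-$q$ polynomial already vanishing on all of $\F_q$; your version is arguably cleaner there, provided one justifies (as you do parenthetically, and as is easily checked) that $\alpha^q\notin\{-\zeta_3,-\zeta_3^2\}$. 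One cosmetic remark: to conclude $b\in\F_{q^2}$ from $b^{q+1}\in\F_q^{\ast}$ it is more direct to write $b^{q^2-1}=(b^{q+1})^{q-1}=1$; invoking surjectivity of the norm, as literally stated, only produces \emph{some} solution in $\F_{q^2}$, and one must add that all solutions differ by $(q+1)$-st roots of unity, which lie in $\F_{q^2}$.
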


\begin{proof}
If $\alpha:=\alpha(P_{(a,b)})$ has $\PP$-order $i$ for some positive integer $i$, then $\alpha \not \in \{0,1,-\zeta_3,-\zeta_3^2\}$ and $P_{(a,b)} \not \in \mathcal O$. As observed in Remark \ref{rem:PQ:existence_of_i}, we have $\PP_{i+1}(\alpha)=0$ if and only if $((\al+\zeta_3)/(\al+\zeta_3^2))^{3i+3}=1$. If the characteristic $p$ divides $i+1$, we see that $((\al+\zeta_3)/(\al+\zeta_3^2))^{3(i+1)/p}=1$, implying that $\PP_j(\alpha)=0$ for some $j$ strictly smaller than $i+1$. By definition of $\PP$-order, this is impossible. If $\gcd(p,i+1)=1$, the $\alpha$ that have $\PP$-order $i$ are precisely those satisfying that $((\al+\zeta_3)/(\al+\zeta_3^2))^{3}$ is a primitive $(i+1)$-th root of unity. Hence there are $3 \varphi(i+1)$ many $\alpha$ with $\PP$-order $i$. Since $\alpha=a^m/(1+a^m)$ and $\alpha \not \in \{0,1\}$, for each such $\alpha$, there are $m$ distinct possibilities for $a$. Since $P_{(a,b)} \not \in \mathcal{O}$, for each such $a$, there are $q+1$ distinct possibilities for $b$. This proves the first part of the lemma.

Now suppose that $P_{(a,b)} \in  \X_3(\overline{\F}_{q^2}) \setminus \mathcal{O}$ is $\F_{q^2}$-rational and $\alpha^2-\alpha+1 \neq 0$. First of all, we claim that in this case $\alpha \in \F_q$. Indeed, since $a,b \in \F_{q^2}$, we obtain that $a^{3m}=a^{q+1} \in \F_q$ and $a^{2m}+a^m=-b^{q+1} \in \F_q$. But then $a^m=(a^{3m}+a^{2m}+a^m)/(a^{2m}+a^m+1) \in \F_q$. Here we used that $a^{2m}+a^m+1 \neq 0$, which follows from the assumption that $\alpha^2-\alpha+1 \neq 0.$ Now $a^m \in \F_q$, implies $\alpha=a^m/(1+a^m) \in \F_q$. In particular $\al^q=\al$. This implies that
\begin{equation*}
    \left(\frac{\al +\zeta_3}{\al + \zeta_3^2}\right)^{q} = \frac{\al^q +\zeta_3^q}{\al^q + \zeta_3^{2q}} = \frac{\al +\zeta_3^2}{\al + \zeta_3},
\end{equation*}
which is exactly the inverse of $\frac{\al +\zeta_3}{\al + \zeta_3^2}$. Hence $\left(\frac{\al +\zeta_3}{\al + \zeta_3^2}\right)^{3m}=\left(\frac{\al +\zeta_3}{\al + \zeta_3^2}\right)^{q+1} = 1$. This shows that $i+1$ divides $m$.

Conversely, if $\alpha^2-\alpha+1=0$, then $P_{(a,b)} \in \X_3(\overline{\F}_{q^2}) \setminus \mathcal{O}$ satisfies $a^{2m}+a^m+1=0$, which in turn implies $b^{q+1}=1$. Hence $a,b \in \F_{q^2}$. If $\alpha^2-\alpha+1 \neq 0$ and $i+1$ divides $m$, then $3(i+1)$ divides $q+1$ and $\left(\frac{\al +\zeta_3}{\al + \zeta_3^2}\right)^{3(i+1)}=1.$ Hence $\left(\frac{\al +\zeta_3}{\al + \zeta_3^2}\right)^{q+1}=1$, which after clearing denominators amounts to the equation $(\al +\zeta_3)^{q+1}-(\al +\zeta_3^2)^{q+1}=0$. This is a polynomial in $\al$ of degree $q$ and we have already seen that this equation is satisfied for all $\alpha \in \F_q$. We may conclude that $\alpha \in \F_q$. But then $a^m \in \F_q$, which implies that $b^{q+1}=-a^{2m}-a^m \in \F_q$. We conclude that also in this case $P_{(a,b)}$ is an $\F_{q^2}$-rational point of $\X_3$.
\end{proof}

Next, we use the polynomials $\PP_j(s)$ and $\QQ_j(s)$ to investigate the existence of functions that will be useful later when determining gaps at points $P_{(a,b)} \in \X_3(\overline{\F}_{q^2})\setminus \mathcal{O}$.

\begin{thm}
\label{thm:fi}
Let $P_{(a,b)} \in \X_3(\overline{\F}_{q^2})\setminus \mathcal{O}$ and suppose that $\al(P_{(a,b)})^2-\al(P_{(a,b)}) +1 \neq 0$. Further, let $i$ be the $\PP$-order of $\al(P_{(a,b)})$. If $i \le m-2$, then there exists a function $f_{i}\in L((3i+3)D_{\infty})$ such that $v_{P_{(a,b)}}(f_{i})=3i+3$. Moreover, for each $j\in \mathbb{Z}$ with $0\leq j\leq \min\{i-1,m-2\}$, there exists a function $f_j\in L((3j+3)D_{\infty})$ with $v_{P_{(a,b)}}(f_j)=3j+2$.
\end{thm}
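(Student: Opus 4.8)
The plan is to build the functions $f_j$ as explicit $\F_{q^2}$-linear combinations of products of the elementary functions $x_a$ and $f_0$ studied in Propositions~\ref{prop:power-series} and \ref{prop:divisors-on-X3}, using the polynomials $\PP_j$ and $\QQ_j$ to control the cancellation of low-order terms in the local expansion at $Q_{(A,B)}$ (equivalently at $P_{(a,b)}$, by Remark~\ref{rem:unramified}). Concretely, since $v_{P_{(a,b)}}(x_a)=1$ and $v_{P_{(a,b)}}(f_0)=2$, any monomial $x_a^{r}f_0^{s}$ lies in $L((3(r+2s)/?)D_\infty)$ — more precisely in $L((3r+3s)D_\infty)$ after accounting for the pole orders $3$ and $3$ at each $P_\infty^j$ coming from \eqref{eq:div:xa} and \eqref{eq:div:f0} — and has valuation $r+2s$ at $P_{(a,b)}$. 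The idea is that, grouping monomials with $r+2s$ fixed, one can engineer linear combinations whose leading term in the $T$-expansion is killed, raising the valuation. The combinatorics of which combinations to take is exactly encoded by $\PP_j$ and $\QQ_j$: writing $x_a = 3T+3T^2+T^3 = (T+1)^3 - 1$ and recalling $f_0 = 3T^2 + (\al+1)T^3 + O(T^q)$, one recognizes $(T+1)$ and the two "twisted" factors $(s+\zeta_3)$, $(s+\zeta_3^2)$ appearing in Definition~\ref{def:PQ} after a suitable substitution $s \leftrightarrow$ (something built from $x_a$, $f_0$); this is the source of the shape of $\PP_j,\QQ_j$.

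The key steps, in order, would be: (1) Introduce an auxiliary function $g \in \F_{q^2}(\X_3)$, a normalized linear combination of $x_a$ and $f_0$, with local expansion $g = T + O(T^2)$ or similar, so that powers $g^k$ have valuation exactly $k$ and controlled poles; combine with a second function $h$ with $v_{P_{(a,b)}}(h)=2$ and a "twist" governed by $\al=\al(P_{(a,b)})$, so that $\PP_j(\al)$ and $\QQ_j(\al)$ literally appear as coefficients in the $T$-expansions of the natural candidate functions. (2) Define $f_j$ for $0 \le j \le \min\{i-1,m-2\}$ as the combination whose expansion begins at order $3j+2$: the vanishing of the order-$(3j+2)$ through order-$(3j+1)$? no — rather, the coefficients of $T^{<3j+2}$ vanish identically by the polynomial identities of Lemma~\ref{lem:PQ:identities}, and the coefficient of $T^{3j+2}$ is a nonzero multiple of $\QQ_{j+1}(\al)$ (or $\PP$), which is nonzero because $j+1 \le i$ and $i$ is the $\PP$-order (so $\PP_{j+1}(\al)\neq 0$, and $\QQ_{j+1}(\al)\neq 0$ too by Remark~\ref{rem:PQ:nocommon:roots} combined with $\PP_{j+1}(\al)\neq 0$ — one of the two is guaranteed nonzero, and one picks the combination accordingly). (3) For the function $f_i$ with $v_{P_{(a,b)}}(f_i)=3i+3$: here $\PP_{i+1}(\al)=0$ by definition of the $\PP$-order, which forces an extra cancellation, pushing the valuation from the "expected" $3i+2$ up to $3i+3$; one must check the coefficient of $T^{3i+3}$ is then nonzero, which should follow from $\QQ_{i+1}(\al)\neq 0$ (again via Remark~\ref{rem:PQ:nocommon:roots}, since $\PP_{i+1}(\al)=0$ is incompatible with $\QQ_{i+1}(\al)=0$). (4) Verify the pole divisor bookkeeping: the constraint $i \le m-2$ (resp.\ $j \le m-2$) ensures $3i+3 \le 3m-3 < 3m$, keeping everything in $L((3i+3)D_\infty)$; one also uses $\al^2-\al+1\neq 0$ to guarantee $-\zeta_3,-\zeta_3^2$ are not roots of the relevant denominators and that $P_{(a,b)}$ behaves generically (cf.\ the hypothesis in Proposition~\ref{prop:divisors-on-X3} that $\al\neq -1$ is needed for $P_{(a,b)}\notin\mathrm{supp}(E_b)$, and similar clean behavior here).

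The main obstacle I expect is step (2)–(3): identifying the precise linear combinations and proving that their low-order $T$-coefficients vanish. This is not merely a truncation argument — one needs the exact polynomial identities \eqref{eq:id:1}–\eqref{eq:id:2} of Lemma~\ref{lem:PQ:identities} to collapse what looks like a sum of many monomials into a single term of the form $(s^2-s+1)^{3j}\PP_{i-j}(s)\PP_\ell(s)$ or its $\QQ$-analogue, and then to read off that the relevant coefficient is (a unit times) $\PP_{j+1}(\al)$ or $\QQ_{j+1}(\al)$. Getting the indexing exactly right, and handling the $O(T^q)$ error terms in the expansion of $f_0$ (which are harmless precisely because $3i+3 \le 3m-3 \le q-2 < q$, so they never interfere with the coefficients we care about — this is the real role of the hypothesis $i\le m-2$), will require care. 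A secondary subtlety is the dichotomy between using $\PP$ or $\QQ$ to witness non-vanishing of the leading coefficient: one may need to treat the two families of functions (those with valuation $3j+2$ versus the single one with valuation $3i+3$) with slightly different combinations, exactly mirroring why the theorem statement separates the $f_j$ ($j<i$) from $f_i$.
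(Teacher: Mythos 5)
Your high-level strategy coincides with the paper's: build the $f_j$ out of $x_a$ and $f_0$, track local expansions at $Q_{(A,B)}$ in the parameter $T$, let $\PP_{j+1}(\al)$ and $\QQ_{j+1}(\al)$ appear as the two leading coefficients, and finish using $\PP_{i+1}(\al)=0$ together with $\QQ_{i+1}(\al)\neq 0$ (Remark \ref{rem:PQ:nocommon:roots}) and the bound $3i+3\le 3m-3=q-2<q$ supplied by $i\le m-2$. However, the proposal stops exactly where the proof has to begin: you never exhibit the functions, and you yourself label steps (2)--(3) as the ``main obstacle''. That obstacle is the entire content of the theorem. The missing idea is a workable induction hypothesis and recipe. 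The paper proves, for all $0\le j\le i$, the existence of $f_j\in L((3j+3)D_\infty)$ with expansion $f_j=3\PP_{j+1}(\al)T^{3j+2}+\QQ_{j+1}(\al)T^{3j+3}+O(T^q)$; it writes out $f_0$, $f_1=-9x_a^2+27f_0-3(\al-5)x_af_0+(\al^2-\al-5)f_0^2$ and $f_2$ explicitly, and for $j\ge 3$ sets $f_j:=-\bigl(\PP_j(\al)\,f_{j-2}f_1-\PP_2(\al)\PP_{j-1}(\al)\,f_{j-1}f_0\bigr)/\bigl((\al^2-\al+1)^2\PP_{j-2}(\al)\bigr)$, a combination of two functions each vanishing to order exactly $3j+1$ whose leading terms cancel; the identities \eqref{eq:id:1}--\eqref{eq:id:2} of Lemma \ref{lem:PQ:identities}, applied with specific index triples, are then precisely what collapse the $T^{3j+2}$ and $T^{3j+3}$ coefficients to $3\PP_{j+1}(\al)$ and $\QQ_{j+1}(\al)$. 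Without producing such a recursion and carrying out that verification, there is no proof.

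Two further inaccuracies are worth flagging. First, there is no ``picking the combination according to which of $\PP_{j+1}(\al)$, $\QQ_{j+1}(\al)$ is nonzero'': one and the same $f_j$ carries $3\PP_{j+1}(\al)$ at order $3j+2$ and $\QQ_{j+1}(\al)$ at order $3j+3$, and the dichotomy in the statement (valuation $3j+2$ for $j<i$ versus $3i+3$ for $j=i$) is resolved automatically by whether $\PP_{j+1}(\al)$ vanishes; no second family of combinations is needed. Second, the bookkeeping ``group monomials $x_a^rf_0^s$ with $r+2s$ fixed'' cannot work as stated: for $j\ge 1$ there is no monomial with valuation $3j+2$ at $P_{(a,b)}$ and pole divisor bounded by $(3j+3)D_\infty$ (one would need $r\le -j$), so the desired function must be a combination of monomials of several \emph{different} valuations achieving several orders of cancellation simultaneously --- which is exactly why the paper runs a two-term recursion on the $f_j$ themselves rather than a single grading by valuation.
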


\begin{proof}
Throughout the proof we simplify the notation by writing $\al$ instead of $\al(P_{(a,b)})$. In a similar vein, we will write $\mathcal{P}_{j}$ and $\QQ_{j}$, rather than $\mathcal{P}_{j}(\al)$ and $\QQ_{j}(\al)$.

Let $Q_{(A,B)}$ be a point of $\mathcal{H}$ lying over $P_{(a,b)}$ and let $T:=\frac{u-A}{A}$, which is a local parameter at $Q_{(A,B)}$.
For each $j$ such that $0\leq j\leq i$, we claim that there exists a function $f_j\in L((3j+3)D_{\infty})$ such that the local power series expansion of $f_j$ at $Q_{(A,B)}$ with respect to the local parameter $T$ is
\begin{equation}
\label{eq:fj}
f_j = 3\mathcal{P}_{j+1} T^{3j+2} + \QQ_{j+1}  T^{3j+3} + O(T^{q}).
\end{equation}
Note that by definition of the $\PP$-order, this will imply that
\begin{equation}
\label{eq:fi}
f_i = \QQ_{i+1}  T^{3i+3} + O(T^{q}).
\end{equation}
This is sufficient to prove the theorem since, as observed in Remark \ref{rem:unramified}, $v_{Q_{(A,B)}}(f_j)=v_{P_{(a,b)}}(f_j)$ and $3j+3 <q$ for all $j$ under consideration.

First of all, note that, for $j=0$, we can take $f_0$ to be exactly the function defined in equation \eqref{eq:f0} and whose local power series expansion with respect to $T$ was computed in equation \eqref{eq:f0:exp}.
To show the result for $j=1$, we define
\begin{equation*}
    f_1 := -9x_a^2 +27f_0 -3(\al-5)x_a f_0+ (\al^2 -\al -5)f_0^2.
\end{equation*}
Elementary calculations show that the local power series expansion of $f_1$ at $Q_{(A,B)}$ with respect to $T$ is precisely
\begin{equation*}
    f_1 = 3\PP_2 T^5 + \QQ_2 T^6 + O(T^{q}).
\end{equation*}
For $j=2$, we define
\begin{multline*}
f_2:=
(\al+1)^{-3}\left(-27\PP_2f_1+3\PP_2^2f_0^2x_a-3\PP_2(\al^4+\al^3-4\al^2-4\al+3)f_0^3\right)\\+(7\al^2-16\al+7)f_1f_0.
\end{multline*}
A somewhat lengthy, but elementary, calculation shows that the local power series expansion of $f_2$ equals
\begin{equation*}
    f_2 = 3\PP_3 T^5 + \QQ_3 T^6 + O(T^{q}).
\end{equation*}

For $3\leq j \leq i$, we assume now that $f_{j-1}$ and $f_{j-2}$ have the form claimed in equation \eqref{eq:fj} and we construct inductively the remaining functions $f_j$ in the following way, defining:
\begin{equation*}
    f_j:=-\frac{\PP_j f_{j-2}f_1 - \PP_2\PP_{j-1} f_{j-1}f_0}{(\al^2 -\al +1)^2 \PP_{j-2}}.
\end{equation*}
The idea of choosing the functions $f_{j-2}f_1$ and $f_{j-1}f_0$ is that the vanishing order at $Q_{(A,B)}$ is $3j+1$ for both. Hence, a suitable linear combination of them will vanish with order at least $3j+2$. Moreover, as $f_{j-2}f_1$ and $f_{j-1}f_0$ lie in $L((3j+3)D_\infty)$, a linear combination of them does as well. Therefore, we only need to show that
\begin{equation*}
    \PP_j f_{j-2}f_1 - \PP_2\PP_{j-1} f_{j-1}f_0 = -(\al^2 -\al +1)^2 \PP_{j-2}\left(3\mathcal{P}_{j+1} T^{3j+2} + \QQ_{j+1}  T^{3j+3} + O(T^{q})\right).
\end{equation*}

The local power series expansion of $\PP_j f_{j-2}f_1 - \PP_2\PP_{j-1} f_{j-1}f_0$ with respect to $T$ can be obtained from the expansions of the functions $f_{j-2}f_1$ and $f_{j-1}f_0$, which are:
\begin{equation*}
    \begin{aligned}
    f_{j-2}f_1 & = \left(3\PP_{j-1}T^{3j-4} + \QQ_{j-1}  T^{3j-3} + O(T^{q})\right)\left(3\PP_{2}T^{5} + \QQ_{2}  T^{6} + O(T^{q})\right)\\
    & = 9\PP_{2} \PP_{j-1} T^{3j+1} + (3\PP_{j-1}\QQ_{2}  + 3\PP_2\QQ_{j-1})T^{3j+2} + \QQ_2 \QQ_{j-1} T^{3j+3} + O(T^{q}),\\
    f_{j-1}f_0 & = \left(3\PP_{j}T^{3j-1} + \QQ_{j}  T^{3j} + O(T^{q})\right)\left(3T^{2} + \QQ_{1}  T^{3} + O(T^{q})\right)\\
    & = 9\PP_{j}T^{3j+1} + (3\PP_{j}\QQ_{1}  + 3\QQ_{j})T^{3j+2} + \QQ_1 \QQ_{j} T^{3j+3} + O(T^{q}).
    \end{aligned}
\end{equation*}
Hence, we have
\begin{equation*}
\begin{split}
    \PP_j f_{j-2}f_1 - \PP_2\PP_{j-1} f_{j-1}f_0 = 3(\PP_{j-1} \PP_j \QQ_2  + \PP_2 \PP_j \QQ_{j-1}  - \PP_2 \PP_{j-1} \PP_j \QQ_1  - \PP_2 \PP_{j-1} \QQ_{j} )T^{3j+2}\\
     + \left(\PP_j \QQ_{j-1}  \QQ_2  - \PP_{j-1} \PP_2 \QQ_{j}  \QQ_1 \right)T^{3j+3} + O(T^{q}).
\end{split}
\end{equation*}
We are therefore left to prove the two following identities:
\begin{equation}
\label{eq:coeff:T3i+2}
    3\left(\PP_{j-1} \PP_j \QQ_2  + \PP_2 \PP_j \QQ_{j-1}  - \PP_2 \PP_{j-1} \PP_j \QQ_1  - \PP_2 \PP_{j-1} \QQ_{j} \right)
    = -3(\al^2 -\al +1)^2 \PP_{j-2} \PP_{j+1}
\end{equation}
and
\begin{equation}
\label{eq:coeff:T3i+3}
    \PP_j \QQ_{j-1}  \QQ_2  - \PP_{j-1} \PP_2 \QQ_{j}  \QQ_1  = -(\al^2 -\al +1)^2 \PP_{j-2} \QQ_{j+1} .
\end{equation}
This can be conveniently done by using Lemma \ref{lem:PQ:identities}. Indeed, consider first equation \eqref{eq:coeff:T3i+2} and use identity \eqref{eq:id:2} as
\begin{equation*}
    \PP_j \QQ_{2} -\PP_2 \QQ_{j} =\PP_{j-2} \QQ_{0} \cdot (s^2-s+1)^{6},
\end{equation*}
i.e., with indices $(j,2,0)$ (listed in order as in the statement of Lemma \ref{lem:PQ:identities}).
Then, we obtain
\begin{equation}
\label{eq:first:part:1}
\begin{split}
    \PP_{j-1} \PP_j \QQ_2  - \PP_2 \PP_{j-1} \QQ_{j} &= \PP_{j-1}(\PP_j\QQ_2  - \PP_2\QQ_{j} )\\
    &= \PP_{j-1}\cdot(\al^2 - \al + 1)^5 \PP_{j-2}.
\end{split}
\end{equation}
By using again equation \eqref{eq:id:2}, this time with indices $(j-1,1,0)$,
we can also rewrite
\begin{equation}
\label{eq:second:part:1}
\begin{split}
    \PP_2 \PP_j \QQ_{j-1}  - \PP_2 \PP_{j-1} \PP_j \QQ_1  &= \PP_2 \PP_j \QQ_{j-1}  \PP_1 - \PP_2 \PP_{j-1} \PP_j \QQ_1 \\
    &= -\PP_2\PP_j (\PP_{j-1}\QQ_1  - \PP_1\QQ_{j-1} )\\
    &= -\PP_2\PP_j \cdot (\al^2 - \al + 1)^2 \PP_{j-2}.
\end{split}
\end{equation}
Then, by equations \eqref{eq:first:part:1} and \eqref{eq:second:part:1}, we have that equation \eqref{eq:coeff:T3i+2} is equivalent to
\begin{equation*}
    \PP_{j-1}\cdot (\al^2 - \al + 1)^5 \PP_{j-2}-\PP_2\PP_j \cdot (\al^2 - \al + 1)^2 \PP_{j-2} = -(\al^2 -\al +1)^2 \PP_{j-2} \PP_{j+1}.
\end{equation*}
Dividing out the factor $(\al^2 - \al + 1)^2 \PP_{j-2}$ both in the right hand side and the left hand side of this equality and rearranging the terms, we obtain
\begin{equation*}
    \PP_{j-1}\cdot (\al^2 - \al + 1)^3 = \PP_2 \PP_j -\PP_{j+1},
\end{equation*}
which holds by Lemma \ref{lem:PQ:identities}, as it is precisely identity \eqref{eq:id:1} with indices $(j,1,1)$.

In order to prove equation \eqref{eq:coeff:T3i+3}, we can argue in a similar way. Indeed, we have:
\begin{multline*}
    \PP_j \QQ_{j-1}  \QQ_2  - \PP_{j-1} \PP_2 \QQ_{j}  \QQ_1  =
    (\PP_j \QQ_2  - \PP_2 \QQ_{j}  + \PP_2 \QQ_{j} )\QQ_{j-1}  - \PP_{j-1} \PP_2 \QQ_{j}  \QQ_1 \\
    =\left(\PP_{j-2}\cdot (\al^2 -\al + 1)^5 + \PP_2\QQ_{j} \right)\QQ_{j-1}  - \PP_{j-1} \PP_2 \QQ_{j}  \QQ_1,
\end{multline*}
where the last equality follows from equation \eqref{eq:id:2} with indices $(j,2,0)$.
Moreover,
\begin{multline*}
    \left(\PP_{j-2}\cdot (\al^2 -\al + 1)^5 + \PP_2\QQ_{j} \right)\QQ_{j-1}  - \PP_{j-1} \PP_2 \QQ_{j}  \QQ_1  = \\
    \PP_{j-2}\cdot (\al^2 -\al + 1)^5\QQ_{j-1}  - \PP_2\QQ_{j} \left(\PP_{j-1}\QQ_1  - \PP_1 \QQ_{j-1} \right) = \\
    \PP_{j-2}\cdot (\al^2 -\al + 1)^5\QQ_{j-1}  - \PP_2 \QQ_{j} \PP_{j-2}\cdot (\al^2-\al+1)^2,
\end{multline*}
where the last equality follows from equation \eqref{eq:id:2} with indices $(j-1,1,0)$.
Finally, using again equation \eqref{eq:id:2} with indices $(2,1,j-1)$,
we have
\begin{multline*}
    \PP_{j-2}\cdot(\al^2 -\al + 1)^5\QQ_{j-1}  - \PP_2 \QQ_{j} \PP_{j-2}\cdot(\al^2-\al+1)^2 = \\
    -\PP_{j-2}\cdot(\al^2 -\al + 1)^2\left(\PP_2 \QQ_{j}  - \QQ_{j-1}\cdot (\al^2-\al + 1)^3 \PP_1\right) =
     -\PP_{j-2}\cdot(\al^2 -\al + 1)^2 \QQ_{j+1},
\end{multline*}
which proves equation \eqref{eq:coeff:T3i+3}.

From this, equation \eqref{eq:fj} follows directly, while equation \eqref{eq:fi} follows observing that $\PP_{i+1} = 0$ by hypothesis and $\QQ_{i+1} \neq 0$ by Remark \ref{rem:PQ:nocommon:roots}.
As we have already observed that, by construction, $f_j\in L((3j+3)D_{\infty})$ for all $j$ in $0\leq j\leq i$, the proof of the theorem is then completed.
\end{proof}

The proof of the theorem does not work if $\alpha^2-\alpha+1=0$, but a very similar approach works as will become clear in the proof of the following result. Recall that, if $\alpha^2-\alpha+1=0$, then $\al$ is not a root of any $\PP_{i}$, for all $i\in \mathbb{Z}_{>0}$.

\begin{thm}
\label{thm:gi}
Suppose that $P_{(a,b)}$ is a point on $\X_3$ such that $\al(P_{(a,b)})^2-\al(P_{(a,b)})+1=0$. Then, for every positive integer $i$ such that $i \le m-2$, there exists a function $g_i\in L((3i+3)D_{\infty})$ with $v_{P_{(a,b)}}(g_i)=3i+2$.
\end{thm}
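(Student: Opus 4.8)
\emph{Overall strategy.} The plan is to imitate the proof of Theorem~\ref{thm:fi}: we construct functions $g_j\in L((3j+3)D_\infty)$ for $0\le j\le m-2$ whose local power series expansion, at a point $Q_{(A,B)}$ of $\mathcal H$ lying over $P_{(a,b)}$ and with respect to the local parameter $T=(u-A)/A$, has the form
\[
g_j \;=\; 3\PP_{j+1}(\al)\,T^{3j+2} \;+\; \QQ_{j+1}(\al)\,T^{3j+3} \;+\; O(T^{q}),
\]
where $\al:=\al(P_{(a,b)})$. By Remark~\ref{rem:unramified} this forces $v_{P_{(a,b)}}(g_j)=v_{Q_{(A,B)}}(g_j)=3j+2$ as soon as $\PP_{j+1}(\al)\neq 0$, and since $3j+3\le 3(m-2)+3=q-2<q$, the error term is irrelevant for reading off this valuation. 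The decisive difference with Theorem~\ref{thm:fi} is that here $\al^2-\al+1=0$, so $\al\in\{-\zeta_3,-\zeta_3^2\}$ and hence $\PP_{j+1}(\al)\neq 0$ for \emph{every} $j$ by Remark~\ref{rem:PQ:nocommon:roots}. Thus the truncation that produces the special function $f_i$ in Theorem~\ref{thm:fi} never happens, and $v_{P_{(a,b)}}(g_j)=3j+2$ for all $j$ in range; in particular the theorem follows once the $g_j$ have been built.

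\emph{The cases $j=0,1,2$.} Take $g_0:=f_0$, $g_1:=f_1$, $g_2:=f_2$ exactly as in the proof of Theorem~\ref{thm:fi}. The formulas defining them involve no division by $\al^2-\al+1$; the sole denominator is $(\al+1)^3$, and $\al+1\neq 0$ because $\al^2-\al+1=0$ forces $\al^3=-1$. The expansions \eqref{eq:f0:exp} and \eqref{eq:fj} were already verified in Theorem~\ref{thm:fi}, and now $\PP_{j+1}(\al)\neq 0$, so $v_{P_{(a,b)}}(g_j)=3j+2$ for $j=1,2$. This already proves the theorem when $i\le 2$.

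\emph{The inductive step $3\le j\le m-2$, and the main obstacle.} Here the recursion from Theorem~\ref{thm:fi} cannot be reused verbatim, as its denominator $(\al^2-\al+1)^2\PP_{j-2}$ vanishes. The substitute uses the degeneracies available when $\al^2-\al+1=0$: Lemma~\ref{lem:PQ:identities}, evaluated at $s=\al$, degenerates because $(s^2-s+1)$ vanishes there, which yields $\PP_{k+1}(\al)=\PP_2(\al)\PP_k(\al)$ and $\QQ_{k+1}(\al)=\PP_2(\al)\QQ_k(\al)$ for all $k\ge 1$, hence $\PP_k(\al)=\PP_2(\al)^{k-1}$ and $\QQ_k(\al)=\PP_2(\al)^{k-1}(\al+1)$. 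One then defines $g_j$ as an $\F_{q^2}$-linear combination of $g_{j-1}f_0$, $g_{j-2}f_1$ and products of $x_a$ with lower-index $g$'s, normalised by a denominator that is a product of powers of $\PP_2(\al)$, $\PP_{j-2}(\al)$ and $\al+1$ --- all nonzero --- and then verifies, via identities \eqref{eq:id:1}--\eqref{eq:id:2}, that the coefficient of $T^{3j+1}$ in this combination vanishes while the coefficient of $T^{3j+2}$ equals $3\PP_{j+1}(\al)$. I expect this last step to be the crux of the argument. The obvious first attempt --- a linear combination of $g_{j-1}f_0$ and $g_{j-2}f_1$ that kills the $T^{3j+1}$-term --- does not suffice: when $\al^2-\al+1=0$, the relations $\PP_{k+1}(\al)=\PP_2(\al)\PP_k(\al)$ and $\QQ_{k+1}(\al)=\PP_2(\al)\QQ_k(\al)$ make $g_{j-1}f_0$ and $g_{j-2}f_1$ coincide modulo $O(T^{q})$ (indeed all products $g_ag_b$ with $a+b=j-1$ do), so any such combination also annihilates $T^{3j+2}$ and $T^{3j+3}$ and thus over-vanishes. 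Breaking this degeneracy forces one to bring in $x_a$, whose expansion $x_a=3T+3T^2+T^3$ is genuinely inhomogeneous and not of $\PP$--$\QQ$ type; the delicate point is to distribute the $x_a$-terms so that the $T^{3j+1}$-coefficient still cancels but the $T^{3j+2}$-coefficient does not. Once such a combination is found, the verification reduces --- exactly as in Theorem~\ref{thm:fi} --- to mechanical applications of Lemma~\ref{lem:PQ:identities}.

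\emph{Pole orders.} Finally, the membership $g_i\in L((3i+3)D_\infty)$ is automatic: by construction $g_i$ is obtained from $x_a,f_0\in L(3D_\infty)$, $f_1\in L(6D_\infty)$ and the inductively built $g_j\in L((3j+3)D_\infty)$ through multiplications and $\F_{q^2}$-linear combinations whose total pole order at each $P_\infty^j$ stays $\le 3i+3$, just as in the proof of Theorem~\ref{thm:fi}.
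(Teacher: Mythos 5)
Your overall strategy coincides with the paper's, and your diagnosis of the obstacle in the inductive step is exactly right: when $\al^2-\al+1=0$ the identities of Lemma \ref{lem:PQ:identities} degenerate, every product $g_ag_b$ with $a+b=j-1$ has the same expansion $9T^{3j+1}+6(\al+1)T^{3j+2}+(\al+1)^2T^{3j+3}$ modulo $O(T^{q})$, so any combination of such products that kills the $T^{3j+1}$-coefficient over-vanishes. Your treatment of the base cases is also sound: the expansions of $f_0,f_1,f_2$ from Theorem \ref{thm:fi} are rational identities in $\al$ whose only denominator is $(\al+1)^3\neq 0$, so they persist here, and $\PP_{j+1}(\al)\neq 0$ for all $j$ by Remark \ref{rem:PQ:nocommon:roots}, so no truncation ever occurs.

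However, the inductive step itself — which is the entire content of the theorem for $i\ge 3$ — is missing. You describe only the expected \emph{shape} of $g_j$ (``a linear combination of $g_{j-1}f_0$, $g_{j-2}f_1$ and products of $x_a$ with lower-index $g$'s'') and explicitly defer the construction (``once such a combination is found\dots''). But exhibiting a combination that kills all coefficients below $T^{3j+2}$, leaves a nonzero coefficient at $T^{3j+2}$, \emph{and} reproduces the exact two-term normal form needed to close the induction (including the vanishing of every coefficient from $T^{3j+4}$ up to $T^{q-1}$) is precisely what must be proved; it does not follow from the shape you describe, and no dimension count can substitute for it since the claim pins down the exact vanishing order. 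The paper resolves this by normalizing $g_0:=f_0$ and $g_1:=(2\al-1)f_1/9$, so that $g_j=3T^{3j+2}+(\al+1)T^{3j+3}+O(T^{q})$, and then setting, for $j\ge 2$,
\[
g_j:=(6\al-3)\,g_{j-1}-\frac{2\al-1}{3}\,g_{j-2}g_0x_a+\frac{3\al-2}{3}\,g_{j-2}g_0^2-(\al-2)\,g_{j-1}g_0,
\]
a four-term combination whose constituents vanish to orders $3j-1$, $3j-1$, $3j$ and $3j+1$. Note in particular the bare summand $g_{j-1}$, which is not among the functions in your list: the degeneracy you identified is broken by playing $g_{j-1}$ (whose expansion has only two nonzero terms below $T^q$) against $g_{j-2}g_0x_a$, not by $x_a$ alone. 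Until such a combination is written down and the cancellations verified, the proof is incomplete.
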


\begin{proof}
As before, in this proof we write $\al$ instead of $\al(P_{(a,b)})$ and $\mathcal{P}_{j}$, $\QQ_{j}$ instead of $\mathcal{P}_{j}(\al)$, $\QQ_{j}(\al)$. For each $i\in \mathbb{Z}_{\geq 0}$, we claim that there exists a function $g_i \in L((3i+3)D_\infty)$ such that the local power series expansion of $g_i$ at $Q_{(A,B)}$ with respect to the local parameter $T$ is:
\begin{equation}\label{eq:gi}
g_i = 3T^{3i+2} + (\alpha+1)T^{3i+3} + O(T^{q})
\end{equation}
Denoting by $f_0$ and $f_1$, the functions constructed in the previous theorem, we see that $g_0=f_0$, while $g_1=(2\alpha-1)f_1/9$, since $$(2\alpha-1)3\PP_2 \equiv 27 \pmod{\alpha^2-\alpha+1}$$ and $$(2\alpha-1)\QQ_2 \equiv 9\alpha+9 \pmod{\alpha^2-\alpha+1}.$$

For $i \geq 2$, we assume now that $g_{i-1}$ and $g_{i-2}$ have the form claimed in equation \eqref{eq:gi} and we construct inductively the remaining functions $g_i$ by taking a suitable linear combination of
\begin{equation*}
    g_{i-1}, \quad g_{i-2}\cdot g_0\cdot x_a, \quad g_{i-2}\cdot g_0^2 \quad \mbox{and} \quad g_{i-1}\cdot g_0.
\end{equation*}
The point of choosing these four functions, is that their vanishing orders at $Q_{(A,B)}$ are $3i-1$, $3i-1$, $3i$ and $3i+1$ respectively. Therefore a suitable linear combination of them will vanish with order at least $3i+2$. Moreover, since the four function all lie in $L((3i+3)D_\infty)$, any linear combination of them does as well.

More in detail, if we set
\[g_i:=(6\alpha-3)g_{i-1}-\frac{2\alpha-1}{3}g_{i-2}g_0x_a+\frac{3\alpha-2}{3}g_{i-2}g_0^2-(\alpha-2) g_{i-1}g_0,\]
then a direct computation shows that equation \eqref{eq:gi} is satisfied.
\end{proof}

\section{Weierstrass semigroups at the $\F_{q^2}$-rational points of $\X_3$}
\label{sec:rational}

In this section, we compute the Weierstrass semigroups at all the $\F_{q^2}$-rational points of $\X_3$. We start with the determination of the semigroup at the points of the set $\mathcal{O}$ and then continue to all other $\F_{q^2}$-rational points of $\X_3$. We will assume that $q$ is at least five, so that $m \ge 2$. If $q=2$, the curve $\X_3$ is an elliptic curve, so all Weierstrass semigroups are just $\{0\} \cup \mathbb{Z}_{\ge 2}$ in that case. 

\begin{rem}
    By the Fundamental Equation (\cite{HKT}*{Page xix (ii)}) and by \cite{HKT}*{Proposition 10.9}, it is well known that both $q$ and $q+1$ are non-gaps at every $\F_{q^2}$-rational point of an $\F_{q^2}$-maximal curve. However, in Theorem \ref{thm:O} and in Lemma \ref{lem:q:and:q+1}, we prove this fact again in the particular case of $\X_3$, as we show this with some easy explicit computations.
\end{rem}

\subsection{The Weierstrass semigroup at $P\in \mathcal{O}$}

\begin{thm}
\label{thm:O}
Let $P\in \mathcal{O}$.
Then $H(P) = \langle q-2, q, q+1\rangle.$
\end{thm}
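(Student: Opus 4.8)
The plan is to prove both inclusions $H(P)\supseteq \langle q-2,q,q+1\rangle$ and $H(P)\subseteq \langle q-2,q,q+1\rangle$, using the fact that $\mathcal{O}$ is a single orbit under the automorphism group $G$ (Corollary~\ref{cor:orbitO}), so it suffices to treat one convenient representative; I would take $P=P_\infty^1$, or rather work with the whole divisor $D_\infty$ in mind since $(q-2)D_\infty$ is canonical by Lemma~\ref{lem:canonicaldiv}. First I would exhibit explicit functions with the required pole orders at a point of $\mathcal{O}$ to show each of $q-2$, $q$, $q+1$ is a non-gap, whence so is the numerical semigroup they generate. For $P_\infty^j$: the function $x$ has $(x)_\infty = 3D_\infty$, and combined with $y$ (which has $(y)_\infty = 2D_\infty$) one can produce poles only at the $P_\infty^i$; more precisely I would use a function analogous to $x_a$ but adapted to isolate a single $P_\infty^j$. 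Actually the cleanest route: since $\mathbb{F}_{q^2}(\mathcal{X}_3)/\mathbb{F}_{q^2}(x)$ is Kummer of degree $q+1$ with $D_\infty$ lying over the pole of $x$ with ramification index $3$, and $\mathcal{X}_3$ is maximal, the Fundamental Equation \eqref{fundeq} applied at $P_\infty^j$ gives a function with pole divisor $(q+1)P_\infty^j$ (taking $P_{(a,b)}$ to be $\mathbb{F}_{q^2}$-rational forces $\Phi(P)=P$ and $qP+\Phi(P)=(q+1)P$, but $P_\infty^j$ is not of the form $P_{(a,b)}$, so I would instead invoke \cite{HKT}*{Page xix (ii)} directly for $P_\infty^j$); similarly one gets $q$ as a non-gap. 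For $q-2$, the canonical divisor $(q-2)D_\infty$ together with Riemann--Roch gives $\ell((q-2)D_\infty)=g$, and more to the point one shows $\dim L(P_\infty^j + (q-3)P_\infty^j)$ jumps, i.e. there is a function with pole divisor exactly $(q-2)P_\infty^j$; a concrete candidate is a polynomial in $x,y$ of the right weighted degree.

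The heart of the argument is the reverse inclusion. The semigroup $\langle q-2,q,q+1\rangle$ has exactly $g_3 = (q^2-q+4)/6$ gaps — this is a numerical-semigroup computation that I would carry out (or cite), checking that the number of gaps of $\langle q-2,q,q+1\rangle$ equals the genus of $\mathcal{X}_3$. Since $H(P)\supseteq\langle q-2,q,q+1\rangle$ and $|\mathbb{N}\setminus H(P)| = g = g_3 = |\mathbb{N}\setminus\langle q-2,q,q+1\rangle|$, the two sets have complements of equal finite cardinality with one contained in the other, forcing equality. So the proof reduces to: (i) the three generators are non-gaps, and (ii) the genus count for the semigroup matches $g_3$.

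For step (ii), the gap-counting for $\langle q-2,q,q+1\rangle$ I would organize by writing $q-2 = n$, so the generators are $n, n+2, n+3$ with $n=q-2$, and count lattice points not representable as $an+b(n+2)+c(n+3)$ with $a,b,c\ge 0$; grouping by the value of $b+c$ (the "number of steps of size $\approx n$") gives, for each $k\ge 0$, the representable residues modulo $n$ near $kn$, and the Frobenius-type bookkeeping yields a closed form. I expect this to come out to $(q^2-q+4)/6$ after simplification using $q\equiv 2\pmod 3$ (so $n=q-2\equiv 0 \pmod 3$, which is what makes the three generators have a common structure $3\mid n$); the condition $q\equiv 2\pmod 3$ should be exactly what is needed for the count to be an integer and equal to $g_3$. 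The main obstacle is getting this numerical-semigroup genus computation right and clean — it is elementary but fiddly, and one must be careful that $\gcd(q-2,q,q+1)=1$ so that the semigroup is indeed numerical, and that no smaller generating set or hidden relation changes the count. An alternative to avoid (ii) entirely: show directly that every integer in $\{1,\dots\}\setminus\langle q-2,q,q+1\rangle$ is a gap at $P$ by producing, for each such integer $\ell$, a regular differential vanishing to order $\ell-1$ at $P$ via Corollary~\ref{cor:maintoolgaps} (valid for $P\notin\mathcal{O}_\infty$, so one would use a representative of $\mathcal{O}$ in $\mathcal{O}_0\cup\mathcal{O}_m$ instead of $P_\infty^1$) — but the counting argument is shorter, so I would pursue that and relegate the differential approach to a remark.
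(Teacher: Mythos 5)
Your overall strategy is exactly the paper's: show $q-2,q,q+1$ are non-gaps at one representative of the orbit $\mathcal{O}$, then show the numerical semigroup $\langle q-2,q,q+1\rangle$ has exactly $g(\X_3)$ gaps, so the containment forces equality. However, your choice of representative $P_\infty^1$ creates a genuine problem in step (i), specifically for the generator $q-2$. Your proposed "concrete candidate" — a polynomial in $x,y$ of the right weighted degree — cannot have pole divisor $(q-2)P_\infty^j$: since $(x)_\infty=3D_\infty$ and $(y)_\infty=2D_\infty$, every polynomial in $x$ and $y$ has poles at \emph{all} $m$ points of $\mathcal{O}_\infty$, so for $q\ge 5$ (i.e.\ $m\ge 2$) no such polynomial isolates a single $P_\infty^j$. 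The paper avoids this by taking the representative in $\mathcal{O}_m$, i.e.\ $P_{(a,0)}$ with $a^m=-1$, which is totally ramified over $\F_{q^2}(x)$; there the functions $\frac{1}{x-a}$, $\frac{y}{x-a}$ and $\frac{y^3}{x(x-a)}$ have pole divisors $(q+1)P_{(a,0)}$, $qP_{(a,0)}$ and (up to an effective part supported away from $P_{(a,0)}$) a pole of order exactly $q-2$ at $P_{(a,0)}$, all read off directly from Proposition \ref{prop:divisors-on-X3}. So the fix is simply to move your representative from $\mathcal{O}_\infty$ to $\mathcal{O}_m$, which Corollary \ref{cor:orbitO} licenses.

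For step (ii), your Frobenius-type count is workable but unnecessary: the paper observes that $(q-2,q+1,q)$ is a telescopic sequence (with $d_1=q-2$, $d_2=\gcd(q-2,q+1)=3$, $d_3=1$, using $q\equiv 2\pmod 3$) and applies the closed-form genus formula for telescopic semigroups \cite{HLP}*{Proposition 5.35}, which immediately gives $g(H)=g(\X_3)$. If you pursue the hand count, you must actually carry it through rather than asserting the answer; as written this part is a plan, not a proof.
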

\begin{proof}
We will prove that
\begin{equation*}
    H(P_{(a,0)})= \langle q-2, q, q+1\rangle
\end{equation*}
for $P_{(a,0)}\in \mathcal{O}$ a point such that $a^m+1=0$, and hence the result will follow as, by Corollary \ref{cor:orbitO}, $\mathcal{O}$ is contained in an orbit of $\mathrm{Aut}(\X_3)$ and all the points in the same orbit have the same Weierstrass semigroup.

We start by showing that the semigroup $H:=\langle q-2, q, q+1\rangle$, that is to say, the semigroup generated by $q-2,q$ and $q+1$, is contained in $H(P_{(a,0)})$. Proposition \ref{prop:divisors-on-X3} implies that the functions
\begin{equation*}
        \frac{1}{x-a}, \quad \frac{y}{x-a}, \quad \mbox{and} \quad \frac{y^3}{x(x-a)}
\end{equation*}
in $\F_{q^2}(\X_3)$ only have a pole at $P_{(a,0)}$ and of order $q+1$, $q$, and $q-2$ respectively.
This shows that $q-2,q,q+1\in H(P_{(a,0)})$, proving that $H\subseteq H(P_{(a,0)})$.

Hence to conclude the proof of the theorem it is sufficient to show that the number of gaps of semigroup $H$, also known as the genus of $H$, is equal to $g(\X_3)$. To do so, note that semigroup $H$ is telescopic, since the sequence $(a_1,a_2,a_3):=(q-2,q+1,q)$ is a telescopic sequence. See for example \cite{HLP}*{Section 5.4} for a short discussion on telescopic semigroups. Defining $d_0=0$, $d_1=q-2$, $d_2=\gcd(q-2,q+1)=3$, and $d_3=\gcd(q-2,q,q+1)=1$, the genus of $H$ is according to \cite{HLP}*{Proposition 5.35} given by
 \begin{equation*} \label{gentelescopic}
g(H)=\frac{1}{2}\bigg( 1+\sum_{i=1}^{3} \bigg( \frac{d_{i-1}}{d_i}-1 \bigg) a_i \bigg)=g(\X_3).
\end{equation*}
\end{proof}

\subsection{The Weierstrass semigroups at the points in $\X_3(\F_{q^2})\setminus \mathcal{O}$}

\begin{lem} \label{lem:q:and:q+1}
Let $P_{(a,b)}\in \X_3(\F_{q^2})\setminus\mathcal{O}$. Then $q+1$ and $q$ are contained in $H(P_{(a,b)})$.
\end{lem}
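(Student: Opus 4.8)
The plan is to exhibit, for a point $P_{(a,b)}\in \X_3(\F_{q^2})\setminus\mathcal{O}$, explicit functions in $\F_{q^2}(\X_3)$ whose pole divisor is supported only at $P_{(a,b)}$ and has degree $q$ and $q+1$ respectively. Since $P_{(a,b)}$ is $\F_{q^2}$-rational, the Fundamental Equation \eqref{fundeq} gives a function $f_{P_{(a,b)},i}$ with divisor $qP_{(a,b)}+\Phi(P_{(a,b)})-(q+1)P_\infty^i$, but here $\Phi(P_{(a,b)})=P_{(a,b)}$ since the point is rational, so $(f_{P_{(a,b)},i})=(q+1)P_{(a,b)}-(q+1)P_\infty^i$. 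This already exhibits a function; the issue is that its pole divisor is concentrated at a single $P_\infty^i$ rather than at $P_{(a,b)}$, so I need to invert and clear the pole at $P_\infty^i$. Concretely, I would combine $f_{P_{(a,b)},i}^{-1}$, which has divisor $(q+1)P_\infty^i-(q+1)P_{(a,b)}$, with $x_a=(x-a)/a$: by Proposition \ref{prop:divisors-on-X3}, equation \eqref{eq:div:xa}, the divisor of $x_a$ is $P_{(a,b)}+\sum_{\xi^{q+1}=1,\xi\neq 1}P_{(a,\xi b)}-3D_\infty$ (using $a^m\neq -1$ since $P_{(a,b)}\notin \mathcal{O}$), so $x_a$ has a simple zero at $P_{(a,b)}$ and poles along all of $D_\infty$. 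The difficulty is that a single function of this elementary type will not have its full pole divisor at $P_{(a,b)}$; I expect that the cleanest route is instead the following.

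First I would show $q+1\in H(P_{(a,b)})$. Let $\bar a\in\F_{q^2}$ with $\bar a^m=-1$ and consider the function $F_{P_{(a,b)}}$ defined in equation \eqref{eq:F}, which has divisor $qP_{(a,b)}+\Phi(P_{(a,b)})-3\sum_{j}P_\infty^j = (q+1)P_{(a,b)}-3D_\infty$ (using again that $P_{(a,b)}$ is rational). Then $F_{P_{(a,b)}}/x^{?}$ — more precisely, I would take a quotient $F_{P_{(a,b)}}\cdot(x^m/(x^m+1))^{k}$ or a polynomial in $x$ of the appropriate degree — but the slick choice is simply $1/F_{P_{(a,b)}}$ composed with a function whose only pole is at $P_{(a,b)}$: I would instead use that $F_{P_{(a,b)}}$ together with a suitable power of $y^3/(x(x^m+1))$ (whose divisor, computed from Proposition \ref{prop:divisors-on-X3}, is supported on ${\mathcal O}$ minus $D_\infty$-type terms) can be combined to cancel all the poles at $D_\infty$ except by reciprocation. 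The honest approach, which I believe is what the authors intend, is: $1/F_{P_{(a,b)}}$ has divisor $3D_\infty-(q+1)P_{(a,b)}$, and I multiply by a function $h\in L(3D_\infty)$ with $h$ having no zero at $P_{(a,b)}$ — then $h/F_{P_{(a,b)}}$ has pole divisor exactly $(q+1)P_{(a,b)}$, giving $q+1\in H(P_{(a,b)})$. Such an $h$ exists: $L(3D_\infty)$ has dimension at least $2$ by Riemann–Roch (indeed it contains $1$ and $x$, since $(x)=3\sum P_0^j-3D_\infty$), and not every element vanishes at $P_{(a,b)}$, e.g. the constant function $1$ works, so $h=1$ suffices and $1/F_{P_{(a,b)}}$ itself has pole divisor $(q+1)P_{(a,b)}$. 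Wait — $1/F_{P_{(a,b)}}$ has a zero of order $3$ along $D_\infty$ and a pole of order $q+1$ at $P_{(a,b)}$, and no other poles, so indeed $q+1\in H(P_{(a,b)})$ directly.

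Next, for $q\in H(P_{(a,b)})$, I would use the function $\phi_{P_{(a,b)}}$ from the Remark preceding equation \eqref{eq:F}, whose divisor is $qP_{(a,b)}+\Phi(P_{(a,b)})-(q+1)P_{(\bar a,0)}=(q+1)P_{(a,b)}-(q+1)P_{(\bar a,0)}$; combining $1/\phi_{P_{(a,b)}}$ (divisor $(q+1)P_{(\bar a,0)}-(q+1)P_{(a,b)}$) with $y$, whose divisor by \eqref{eq:divs} is $\sum_j P_0^j+\sum_{a^m+1=0}P_{(a,0)}-2D_\infty$ and which therefore has a simple zero at $P_{(\bar a,0)}$ and no zero or pole at $P_{(a,b)}$, the product $y/\phi_{P_{(a,b)}}$ has a zero of order $q$ at $P_{(\bar a,0)}$ — hence a pole divisor at $P_{(a,b)}$ of order $q+1$ reduced by... more carefully: $y/\phi_{P_{(a,b)}}$ has divisor $(\sum_j P_0^j+\sum_{a^m+1=0}P_{(a,0)}-2D_\infty)+((q+1)P_{(\bar a,0)}-(q+1)P_{(a,b)})$; the coefficient of $P_{(\bar a,0)}$ becomes $q+2$ wait that increases the zero order, not what I want. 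The right combination is $\phi_{P_{(a,b)}}/y$: divisor $(q+1)P_{(a,b)}-(q+1)P_{(\bar a,0)}-\sum_j P_0^j-\sum P_{(a,0)}+2D_\infty$, whose only term with positive coefficient outside $P_{(a,b)}$ might be at $D_\infty$, and the coefficient at $P_{(\bar a,0)}$ is now $-(q+1)-1=-(q+2)$. Hmm. The clean statement is: take $\phi_{P_{(a,b)}}\cdot x_{\bar a}=F_{P_{(a,b)}}$ already done, and for $q$, divide $F_{P_{(a,b)}}$ by a function with a simple zero at one $P_\infty^j$ and a simple pole elsewhere off $P_{(a,b)}$ among the $P_\infty$'s — but a local parameter at $P_\infty^j$ in $\F_{q^2}(\X_3)$ that is regular and nonzero at $P_{(a,b)}$ exists since $P_\infty^j\neq P_{(a,b)}$. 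The expected main obstacle is precisely this bookkeeping: producing, from the Fundamental Equation, a function whose pole support collapses to the single point $P_{(a,b)}$ with the correct multiplicity, which amounts to finding a function in $L((q+1)D_\infty)$ (resp. $L(qD_\infty+\text{one extra zero condition})$) not vanishing at $P_{(a,b)}$ to the needed order — an existence statement that follows from Riemann–Roch dimension counts together with the explicit elementary functions $x,y,x_a$ already available. I would organize the proof as: (1) recall $(F_{P_{(a,b)}})=(q+1)P_{(a,b)}-3D_\infty$; (2) conclude $q+1\in H(P_{(a,b)})$ via $1/F_{P_{(a,b)}}$ after multiplying by an element of $L(3D_\infty)$ nonvanishing at $P_{(a,b)}$; (3) for $q$, produce analogously a function with divisor $qP_{(a,b)}+R-3D_\infty$ with $R$ an effective degree-one divisor not equal to $P_{(a,b)}$ — e.g.\ using $\phi_{P_{(a,b)}}\cdot(\text{linear-in-}x\text{ factor})$ chosen so that its zero divisor off $P_{(a,b)}$ has degree matching — and invert; (4) for each, divide by an appropriate element of $L(3D_\infty)$ or $L(2D_\infty)$ to clear poles at infinity while keeping nonvanishing at $P_{(a,b)}$, using Proposition \ref{prop:divisors-on-X3} to control supports.
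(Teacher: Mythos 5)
Your argument for $q+1\in H(P_{(a,b)})$ is correct and is essentially the paper's: since $P_{(a,b)}$ is $\F_{q^2}$-rational, $\Phi(P_{(a,b)})=P_{(a,b)}$, so the function $f_{P_{(a,b)},i}$ from equation \eqref{fundeq} has divisor $(q+1)P_{(a,b)}-(q+1)P_\infty^i$ and its reciprocal exhibits $q+1$ as a non-gap. (Your route via $F_{P_{(a,b)}}$ amounts to the same thing, although the paper only introduces $\phi_{P_{(a,b)}}$ and $F_{P_{(a,b)}}$ for non-rational points, so strictly you should fall back on \eqref{fundeq}.)

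The $q$ part, however, has a genuine gap. Your final plan is to produce a function with divisor $qP_{(a,b)}+R-3D_\infty$, where $R$ is a single point different from $P_{(a,b)}$, ``and invert.'' But the reciprocal of such a function has pole divisor $qP_{(a,b)}+R$: it has a pole at $R$ as well, so it does not witness $q\in H(P_{(a,b)})$, for which you need a function with $(f)_\infty=qP_{(a,b)}$ exactly. Your follow-up step of dividing by an element of $L(3D_\infty)$ ``to clear poles at infinity'' cannot repair this: anything you bring in to kill the pole at $R$ either reintroduces poles elsewhere or vanishes at $P_{(a,b)}$ and drops the pole order below $q$. The idea you are missing is to use $x-a$ itself. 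Since $P_{(a,b)}\notin\mathcal{O}$ we have $a^m\neq -1$, so by equation \eqref{eq:div:xa} the divisor of $x-a$ is $P_{(a,b)}+\sum_{\xi^{q+1}=1,\,\xi\neq 1}P_{(a,\xi b)}-3D_\infty$; dividing the degree-$(q+1)$ function by $x-a$ therefore trades the zero of order $q+1$ at $P_{(a,b)}$ for a pole of order exactly $q$, while the excess degree is absorbed as \emph{zeros} at the $q$ conjugate points $P_{(a,\xi b)}$ rather than as a pole at a new point. Concretely, the paper takes
$$h_q:=\frac{(x-a)\,f_{P_{(\bar{a},0)},1}}{f_{P_{(a,b)},1}\,(x-\bar{a})},$$
where the factor $f_{P_{(\bar a,0)},1}/(x-\bar a)$ has divisor $3D_\infty-(q+1)P_\infty^1$ and cancels the poles at infinity exactly, so that $(h_q)=-qP_{(a,b)}+\sum_{\xi^{q+1}=1,\,\xi\neq 1}P_{(a,\xi b)}$ and $q\in H(P_{(a,b)})$ follows.
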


\begin{proof}
The fact that $q+1 \in H(P_{(a,b)})$ is simply a consequence of equation \eqref{fundeq}. To prove that $q \in H(P_{(a,b)})$, let $P_{(\bar{a},0)}\in \X_3(\F_{q^2})$ such that $\bar{a}^m+1=0$ and consider the function

$$h_q:=\frac{(x-a)f_{P_{(\bar{a},0)},1}}{f_{P_{(a,b)},1}(x-\bar{a})},$$

where the functions $f_{P_{(a,b)},1}$ and $f_{P_{(\bar{a},0)},1}$ are defined as in equation \eqref{fundeq}. Then, from equations \eqref{fundeq}, \eqref{eq:div:xa}, \eqref{eq:divs}, one has
\begin{equation*}
(h_q)=-qP_{(a,b)} +\sum_{\xi^{q+1}=1, \ \xi \ne 1} P_{(a,\xi b)},
\end{equation*}
implying that $q \in H(P_{(a,b)})$.
\end{proof}

\begin{thm}
\label{thm:special:short:orbit}
Let $P_{(a,b)}\in \X_3(\F_{q^2})\setminus\mathcal{O}$ be a point such that $\alpha(P_{(a,b)})^2-\alpha(P_{(a,b)})+1=0$. Then
\begin{equation*}
    H(P_{(a,b)}) = \langle q, q+1, (q-1) +i(q-2) \mid i=0,\ldots, m-2\rangle.
\end{equation*}
\end{thm}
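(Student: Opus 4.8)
The strategy is the standard two-sided approach: exhibit functions realizing the claimed generators to get one inclusion, then count gaps to get equality. Let $P:=P_{(a,b)}$ and write $\al:=\al(P)$, so $\al^2-\al+1=0$, i.e.\ $a^{2m}+a^m+1=0$ and hence $b^{q+1}=1$ (this is the condition that $P$ lies in the special short orbit $\mathcal{O}_m$-analogue inside $\X_3(\F_{q^2})$). By Lemma \ref{lem:q:and:q+1}, $q$ and $q+1$ are nongaps at $P$. For the generators $(q-1)+i(q-2)$ with $0\le i\le m-2$, I would use Theorem \ref{thm:gi}: it provides functions $g_i\in L((3i+3)D_\infty)$ with $v_P(g_i)=3i+2$. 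Multiplying $g_i$ by a suitable power of the function $y^3/(x(x^m+1))$ — which by Lemma \ref{lem:canonicaldiv} (or directly from Proposition \ref{prop:divisors-on-X3}) has divisor $-(q-2)D_\infty$ plus terms supported away from... — wait, I need a function with pole divisor exactly a multiple of $D_\infty$ and no zero at $P$. The function $z:=y^3/(x(x^m+1))$ has $(z)_\infty=(q-2)D_\infty$ and $v_P(z)=0$ since $P\notin\mathcal{O}$. Then $g_i\cdot z^{?}$ will not in general have pole divisor supported only at $P$. Instead I would argue as follows: $g_i\in L((3i+3)D_\infty)$ with $v_P(g_i)=3i+2$, and I want a function with pole divisor $nP$ for $n=(q-1)+i(q-2)$.

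Here is the cleaner route. Since $\X_3$ is maximal, equation \eqref{fundeq} gives for each $\F_{q^2}$-rational $P$ a function with pole divisor $qP+\Phi(P)-(q+1)P_\infty^j$; but $\Phi(P)=P$ here, so actually there is $f\in\F_{q^2}(\X_3)$ with $(f)=(q+1)P-(q+1)P_\infty^j$... no, $(f_{P,j})=qP+\Phi(P)-(q+1)P_\infty^j = (q+1)P-(q+1)P_\infty^j$ only if $\Phi(P)=P$, which holds since $P$ is $\F_{q^2}$-rational. So $h:=1/f_{P,1}$ has $(h)_\infty=(q+1)P$ up to the zero at $P_\infty^1$; more precisely $(h)=(q+1)D_\infty$-type issues. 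The honest statement: from \eqref{fundeq} the function $f_{P,1}$ has divisor $(q+1)P-(q+1)P_\infty^1$, so $1/f_{P,1}$ has pole divisor $(q+1)P$ — wait that gives $q+1\in H(P)$, already known. To build $(q-1)+i(q-2)$: consider $g_i/z$ where $z=y^3/(x(x^m+1))$ so $(z)=(q-2)D_\infty$ exactly (Lemma \ref{lem:canonicaldiv} identifies $(ydx/(x(x^m+1)))=(q-2)D_\infty$, and similarly $(z)=(q-2)D_\infty$ — actually let me just cite that $z$ has divisor $(q-2)D_\infty$). Hmm, but $g_i/z^k$ then has a pole of order $k(q-2)$ at each $P_\infty^j$ subtracted, which is wrong sign. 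I think the right object is: take the function $w_i$ with $(w_i)_\infty\subseteq(3i+3)D_\infty$ and zero of order $3i+2$ at $P$, namely $g_i$, and divide by a function with a zero of order $3i+3$ along $D_\infty$-directions... this is getting tangled, so in the writeup I will instead directly compute: using \eqref{eq:divs} we have $(y^{q+1})=(q+1)\sum_j P_0^j+(q+1)\sum_{a^m+1=0}P_{(a,0)}-2(q+1)D_\infty$ and $(x^m)=3m\sum_j P_0^j-3mD_\infty=(q+1)\sum_j P_0^j-(q+1)D_\infty$. The key pole-only function: from Theorem \ref{thm:O}'s proof applied at a point $P_{(\bar a,0)}\in\mathcal{O}$, the function $y^3/(x(x-\bar a))$ has pole divisor exactly $(q-2)P_{(\bar a,0)}$. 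Multiplying $g_i$ by $(x-\bar a)^{?}$...

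Let me just state the plan at the level of logic. \textbf{Step 1 (generators are nongaps):} $q,q+1\in H(P)$ by Lemma \ref{lem:q:and:q+1}. For $n_i:=(q-1)+i(q-2)$, $0\le i\le m-2$: by Theorem \ref{thm:gi} there is $g_i\in L((3i+3)D_\infty)$ with $v_P(g_i)=3i+2$; combining $g_i$ with the function $\phi_{P}\in\F_{q^2}(\X_3)$ of divisor $(q+1)P-(q+1)P_\infty^1$ (from \eqref{fundeq} with $\Phi(P)=P$) and with powers of $x,y$ whose $D_\infty$-pole behaviour is controlled by \eqref{eq:divs}, one produces a function whose only pole is at $P$ of order exactly $n_i$. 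The bookkeeping: $g_i$ contributes pole order $\le 3i+3$ at each $P_\infty^j$ and a zero of order $3i+2$ at $P$; I need to clear the poles at $P_\infty^2,\dots,P_\infty^m$ and convert the remaining pole at $P_\infty^1$ into a pole at $P$ of the right order, which is exactly what multiplying by an appropriate rational function in $x$ (totally determined by $x$-coordinates, using the Kummer structure and \eqref{eq:div:xa}) together with a power of $\phi_P/x$-type functions achieves. This forces the pole order to be $n_i$ by a degree count: $\deg(g_i)_\infty - (\text{something})$. \textbf{Step 2 (no other nongaps):} Let $H:=\langle q,q+1,n_0,\dots,n_{m-2}\rangle$; I claim $H\subseteq H(P)$ by Step 1 and $H(P)\subseteq H$ by a genus count, showing the number of gaps of $H$ equals $g(\X_3)=(q^2-q+4)/6$. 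To compute the genus of $H$: note $n_0=q-1$, and $n_i-n_{i-1}=q-2$, while $q,q+1$ fill in residues; a direct Apéry-set / generating-function computation gives $\#(\mathbb N\setminus H)=g(\X_3)$. Alternatively — and this is the cleaner route — use Corollary \ref{cor:maintoolgaps}: since $P\notin\mathcal{O}_\infty$, every $h\in L((q-2)D_\infty)$ yields a gap $v_P(h)+1$; by choosing $h$ ranging over a basis of $L((q-2)D_\infty)$ adapted to $P$ (a "staircase" basis with distinct pole-at-$P$... zero-at-$P$ orders), one obtains $\dim L((q-2)D_\infty)=\ell$ distinct gaps, and one checks these are precisely the complement of $H$ in $\mathbb N$, using $\dim L((q-2)D_\infty) = (q-2)m - g(\X_3)+1$ (Riemann–Roch, since $(q-2)D_\infty$ is canonical) — i.e.\ $g(\X_3)$ — and the fact that the gaps produced are exactly $\{0\}\cup$(gaps) appropriately, hence $G(P)$ is determined and equals $\mathbb N\setminus H$.

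\textbf{Main obstacle.} The delicate point is Step 1: turning the functions $g_i\in L((3i+3)D_\infty)$, which have the right \emph{local} behaviour at $P$ but poles spread over all of $D_\infty$, into functions with pole divisor supported \emph{only} at $P$ of the \emph{exact} order $n_i=(q-1)+i(q-2)$. This requires multiplying by carefully chosen functions (rational in $x$, using the Kummer structure from \eqref{eq:div:xa}, and the maximality function $\phi_P$ from \eqref{fundeq}) so that the poles at $P_\infty^2,\dots,P_\infty^m$ are cancelled, the zero at $P$ is consumed, and the degree count pins down the order at $P$. The other place to be careful is confirming $\al^2-\al+1=0$ forces $b^{q+1}=1$ so that $P$ is genuinely $\F_{q^2}$-rational and the orbit argument (or at least Lemma \ref{lem:q:and:q+1}) applies; this is immediate from $a^{2m}+a^m+1=0 \Rightarrow b^{q+1}=-a^{2m}-a^m=1$. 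The genus computation in Step 2, whichever route is taken, is routine once the combinatorial structure of $H$ (or the Riemann–Roch dimension count via Corollary \ref{cor:maintoolgaps}) is set up; I would present it via the telescopic-type or Apéry-set bookkeeping, or — most cleanly — by noting that Corollary \ref{cor:maintoolgaps} already produces $g(\X_3)$ gaps and these must be all of them, so the gap set is forced and its complement is exactly $H$.
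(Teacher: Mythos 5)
Your proposal is correct and follows essentially the same route as the paper: $q,q+1\in H(P_{(a,b)})$ from Lemma \ref{lem:q:and:q+1}, the generators $(q-1)+i(q-2)$ obtained from the functions $g_i$ of Theorem \ref{thm:gi} combined with Fundamental-Equation functions and $(x-\bar a)$, and then a direct combinatorial count showing the abstract semigroup already has at most $g(\X_3)$ gaps (the paper uses this interval-by-interval count rather than your alternative via Corollary \ref{cor:maintoolgaps}). The explicit function you were circling around in Step 1 is $G_i:=g_i\cdot f_{P_{(\bar a,0)},1}^{i+1}\big/\bigl(f_{P_{(a,b)},1}^{i+1}(x-\bar a)^{i+1}\bigr)$, whose divisor is $E_i-((q-1)+i(q-2))P_{(a,b)}$ with $P_{(a,b)}\notin\mathrm{supp}(E_i)$, exactly by the degree count you describe: the factor $\bigl(f_{P_{(\bar a,0)},1}/(x-\bar a)\bigr)^{i+1}f_{P_{(a,b)},1}^{-(i+1)}$ has divisor $3(i+1)D_\infty-(i+1)(q+1)P_{(a,b)}$, so its zeros along $D_\infty$ absorb the $(3i+3)D_\infty$ poles of $g_i$, and $(i+1)(q+1)-(3i+2)=(q-1)+i(q-2)$.
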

\begin{proof}
We start by showing that the semigroup $H:=\langle q, q+1, (q-1) +i(q-2) \mid i=0,\ldots, m-2\rangle$ is contained in $H(P_{(a,b)})$. To this aim, we show that $q, q+1, (q-1) +i(q-2)$, for all $i=0,\ldots, m-2$, are pole numbers of $P$. By Lemma \ref{lem:q:and:q+1}, we already know that $q,q+1\in H(P_{(a,b)})$, so we are left to show that $(q-1) +i(q-2)$ is a pole number for every $i=0,\ldots, m-2$. We prove this considering the following family of functions. For all $i$ such that $0\leq i\leq m-2$, let $P_{(\bar{a},0)}$ be a point such that $\bar{a}^m+1=0$ and define the function
\begin{equation*}
    G_i:= \frac{g_i \cdot f_{P_{(\bar{a},0)},1}^{i+1}}{f_{P_{(a,b)},1}^{i+1} \cdot (x-\bar{a})^{i+1}},
\end{equation*}
where the functions $g_i$ are those built in Theorem \ref{thm:gi}.
Then using equations \eqref{fundeq}, \eqref{eq:div:xa}, \eqref{eq:divs} and Theorem \ref{thm:gi}, the divisor of the function $G_i$ is seen to be
\begin{equation*}
(G_i) =  E_i - ((q-1) + i(q-2))P_{(a,b)},
\end{equation*}
where $E_i\in \mathrm{Div}(\X_3)$ is an effective divisor such that $P_{(a,b)}\not \in \mathrm{supp}(E_i)$.
Therefore, $(q-1) +i(q-2)\in H(P_{(a,b)})$ for all $i=1,\ldots, m-2$.

To complete the proof, we need to show that the genus of the semigroup $H$ is less than or equal to $g(\X_3)$. Indeed the inequality $g(H) \ge g(\X_3)$ is already clear, since we just showed that $H \subseteq H(P_{(a,b)})$. Of course we know $0 \in H$, but we claim that for $j=1,\dots,m-1$, all integers in $\{j(q-2)+1,\dots,j(q+1)\}$ are in $H$ as well. This is clear for $j=1$, since $q-1,q,q+1 \in H$. If this is true for some $j<m-1$, then adding $q-1$ and $q+1$ to all integers in $\{j(q-2)+1,\dots,j(q+1)\}$, shows that the consecutive integers in $\{(j+1)(q-2)+2,\dots,(j+1)(q+1)\}$ are all in $H$. Since $(j+1)(q-2)+1=(q-1)+j(q-2) \in H$, we conclude that all integers in $\{(j+1)(q-2)+1,\dots,(j+1)(q+1)\}$ are in $H$. This shows the claim. Now note that $\{(m-1)(q-2)+1,\dots,(m-1)(q+1)\}$ consists of $q-2$ consecutive integers, all in $H$. Adding integral multiples of $q-1$ and $q$ to this set, we obtain that all integers greater than or equal to $(m-1)(q-2)+1+q-1=(m-1)(q+1)+2$ are in $H$. This means that the number of gaps in $H$ is at most \[g(H) \le (q-2)+(q-5)+\cdots+3+1.\] The final $+1$ counts the potential gap $(m-1)(q+1)+1$. Hence \[g(H) \le 1+3\sum_{k=1}^{m-1} k =1+3 m(m-1)/2 =g(\X_3),\]
which is what we needed to show.
\end{proof}

\begin{thm}
\label{thm:rationalcase:other:semigroups}
Let $P_{(a,b)}\in \X_3(\F_{q^2}) \setminus \mathcal{O}$ be a point such that $\alpha(P_{(a,b)})^2-\alpha(P_{(a,b)})+1 \neq 0$. Further, let $i$ be the $\PP$-order of $\alpha(P_{(a,b)})$. If $i \le m-2$, then
\begin{equation*}
    H(P_{(a,b)}) = \langle q, q+1, (q-1) +j(q-2),(q-1)+i(q-2)-1 \mid j=0,\ldots, i-1\rangle.
\end{equation*}
If $i = m-1$, then
\begin{equation*}
    H(P_{(a,b)}) = \langle q, q+1, (q-1) +j(q-2) \mid j=0,\ldots, m-2\rangle.
\end{equation*}
\end{thm}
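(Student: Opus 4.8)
The strategy mirrors the proof of Theorem \ref{thm:special:short:orbit}: exhibit functions showing that the proposed semigroup $H$ is contained in $H(P_{(a,b)})$, and then argue that $g(H) \le g(\X_3)$, which by maximality of $\X_3$ forces equality. The case $i=m-1$ is essentially the same situation already handled in Theorem \ref{thm:special:short:orbit}, since the listed generators coincide with those of that theorem; only the case $i \le m-2$ requires the extra generator $(q-1)+i(q-2)-1$, and that is where the new function $f_i$ of Theorem \ref{thm:fi} with $v_{P_{(a,b)}}(f_i)=3i+3$ comes in.

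\textbf{Step 1: membership.}
First I would show $q,q+1 \in H(P_{(a,b)})$ by Lemma \ref{lem:q:and:q+1}. Next, for $0 \le j \le i-1$, I would build functions $G_j$ exactly as in Theorem \ref{thm:special:short:orbit}, namely $G_j := g_j \cdot f_{P_{(\bar a,0)},1}^{j+1} / (f_{P_{(a,b)},1}^{j+1}(x-\bar a)^{j+1})$ but using the functions $f_j$ from Theorem \ref{thm:fi} (with $v_{P_{(a,b)}}(f_j)=3j+2$) in place of $g_j$; combining equations \eqref{fundeq}, \eqref{eq:div:xa} and \eqref{eq:divs}, the divisor of $G_j$ is $E_j - ((q-1)+j(q-2))P_{(a,b)}$ with $E_j$ effective and $P_{(a,b)} \notin \mathrm{supp}(E_j)$, so $(q-1)+j(q-2) \in H(P_{(a,b)})$. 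Finally, using the function $f_i \in L((3i+3)D_\infty)$ with $v_{P_{(a,b)}}(f_i)=3i+3$ from Theorem \ref{thm:fi}, I would form the analogous quotient $G_i := f_i \cdot f_{P_{(\bar a,0)},1}^{i+1}/(f_{P_{(a,b)},1}^{i+1}(x-\bar a)^{i+1})$; its divisor is $E_i - ((q-1)+i(q-2)-1)P_{(a,b)}$ with $E_i$ effective avoiding $P_{(a,b)}$ (the single unit drop in the order at $P_{(a,b)}$ compared to the $G_j$ pattern comes precisely from $f_i$ vanishing to order $3i+3$ rather than $3i+2$). Hence $(q-1)+i(q-2)-1 \in H(P_{(a,b)})$, and $H \subseteq H(P_{(a,b)})$.

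\textbf{Step 2: the genus count.}
For the reverse inequality it suffices to show the number of gaps of $H$ is at most $g(\X_3)=1+3m(m-1)/2$. As in Theorem \ref{thm:special:short:orbit}, using $q-1,q,q+1 \in H$ one shows inductively that for $j=1,\dots,i$ all integers in $\{j(q-2)+1,\dots,j(q+1)\}$ lie in $H$; this accounts for the blocks of consecutive nongaps up to level $i$. The extra element $(q-1)+i(q-2)-1 = i(q-2)$ closes one further gap: together with $q-1,q,q+1 \in H$ it lets the induction push to level $m-1$, so that all integers in $\{(m-1)(q-2),(m-1)(q-2)+1,\dots,(m-1)(q+1)\}$ — now $q-1$ consecutive integers — lie in $H$, whence every integer $\ge (m-1)(q+1)+1$ is a nongap. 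Counting the surviving gaps level by level gives $g(H) \le (q-2)+(q-5)+\cdots$ with one fewer unit than in Theorem \ref{thm:special:short:orbit} thanks to the extra generator, and a direct arithmetic check yields $g(H) \le 1+3m(m-1)/2 = g(\X_3)$. When $i=m-1$ there is no room for the extra generator (it would require $i \le m-2$), the semigroup is literally the one of Theorem \ref{thm:special:short:orbit}, and the same count applies verbatim.

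\textbf{Main obstacle.}
The only genuinely delicate point is the bookkeeping in Step 2: one must verify that the single extra generator $i(q-2)$ really removes exactly one gap from the count obtained in the $\alpha^2-\alpha+1=0$ case, and that after reaching the length-$(q-1)$ run of consecutive nongaps at level $m-1$ the closure under addition of $q-1$ and $q$ indeed leaves no gaps above $(m-1)(q+1)$. This requires care because the $\PP$-order $i$ can be strictly smaller than $m-1$, so the "missing'' higher-level blocks must be generated purely from translates, and one must confirm the arithmetic identity $1+3\sum_{k=1}^{m-1}k = g(\X_3)$ still balances after subtracting the one gap. I do not expect conceptual difficulties beyond this, since all the needed functions are already in hand from Section \ref{sec:twofamilies}.
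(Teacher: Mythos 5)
Your Step 1 matches the paper's argument: the same quotient functions built from the $f_j$ of Theorem \ref{thm:fi} establish $H \subseteq H(P_{(a,b)})$, and your treatment of the case $i=m-1$ (reduce to Theorem \ref{thm:special:short:orbit}) is also what the paper does. The genuine gap is in Step 2. First, a computation: the extra generator is $(q-1)+i(q-2)-1=(i+1)(q-2)$, not $i(q-2)$. More seriously, you never invoke the fact --- supplied by Lemma \ref{lem:Porder_rational} precisely because $P_{(a,b)}$ is $\F_{q^2}$-rational --- that $i+1$ divides $m$, and without it your claim that the induction ``pushes to level $m-1$'' does not go through. For levels $j>i$ there is no generator of the form $j(q-2)+1$, so the block $\{j(q-2)+1,\dots,j(q+1)\}$ cannot be filled merely by adding $q-1$, $q$, $q+1$ to the previous block: the first element of each block is exactly what the generators $(q-1)+j(q-2)$ supplied in Theorem \ref{thm:special:short:orbit}, and those are absent for $j\ge i$. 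The paper's mechanism is different: it translates everything already obtained by the element $(i+1)(q-2)$ and by the block at level $i+1$, advancing $i+1$ levels at a time, and the divisibility $i+1\mid m$ guarantees this tiles the levels exactly up to $m$, terminating in a run of $q$ consecutive non-gaps starting at $m(q-2)+2=(m-1)(q+1)+2$.

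Finally, your assertion that the resulting gap count is ``one fewer unit'' than in Theorem \ref{thm:special:short:orbit} cannot be correct: since $H\subseteq H(P_{(a,b)})$ forces $g(H)\ge g(H(P_{(a,b)}))=g(\X_3)$, a count of $g(\X_3)-1$ would be a contradiction. In fact the count is exactly the same as in Theorem \ref{thm:special:short:orbit}: at each level $k(i+1)$ the roles of $k(i+1)(q-2)$ and $k(i+1)(q-2)+1$ as gap and non-gap are simply swapped relative to that theorem, so the total number of gaps is unchanged and equals $g(\X_3)$. The bookkeeping you flagged as the delicate point is precisely the part the proposal does not actually carry out.
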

\begin{proof}
We first assume that $i \le m-2$.
We proceed similarly as in the proof of Theorem \ref{thm:special:short:orbit}, showing that the semigroup $H:=\langle q-1, q, q+1, (q-1) +j(q-2), (q-1)+i(q-2)-1 \mid j=1,\ldots, i-1\rangle$ is contained in $H(P_{(a,b)})$ and has at most $g(\X_3)$ gaps. For all $j$ such that $j=1,\ldots, i-1$, let $P_{(\bar{a},0)}$ be a point with $\bar{a}^m+1=0$ and define the function
\begin{equation*}
    F_j:= \frac{f_j \cdot f_{P_{(\bar{a},0)},1}^{j+1}}{f_{P_{(a,b)},1}^{j+1} \cdot (x-\bar{a})^{j+1}},
\end{equation*}
where the $f_j$ are the functions built in Theorem \ref{thm:fi}.
Using equations \eqref{fundeq}, \eqref{eq:div:xa}, \eqref{eq:divs} and Theorem \ref{thm:gi}, the divisor of the function $F_j$ can be seen to be
\begin{equation*}
    (F_j) = E_j - ((q-1) + j(q-2))P_{(a,b)}
\end{equation*}
where $E_j\in \mathrm{Div}(\X_3)$ is an effective divisor such that $P_{(a,b)}\not \in \mathrm{supp}(E_j)$.
Therefore, $(q-1) +j(q-2)\in H(P)$ for all $j=1,\ldots, i-1$.
Similarly
\[    (F_i) = E_i - ((q-1) + i(q-2)-1)P_{(a,b)},\]
where $E_{i}\in \mathrm{Div}(\X_3)$ is an effective divisor such that $P_{(a,b)}\not \in \mathrm{supp}(E_{i})$.
Hence, we now have shown that $H\subseteq H(P_{(a,b)})$.

What remains to be shown is that the genus of the semigroup $H$ does not exceed $g(\X_3)$. We know $0 \in H$ and just as in the proof of Theorem \ref{thm:special:short:orbit} we conclude that all integers in the set $\{j(q-2)+1,\dots,j(q+1)\}$ are in $H$ for any $j=1,\dots,i$. Further, we have already shown that $(i+1)(q-2) \in H$ and adding $q-1$, $q$, and $q+1$ to the integers in $\{i(q-2)+1,\dots,i(q+1)\}$ yields that $\{(i+1)(q-2)+2,\dots,(i+1)(q+1)\} \subseteq H$.

Since $P_{(a,b)} \in \X_3(\F_{q^2})$, Lemma \ref{lem:Porder_rational} implies that $i+1$ divides $m$. We claim that for $k=0,\dots,m/(i+1)-1$ and all $j=1,\dots,i$ the sets $\{(k(i+1)+j)(q-2)+1,\dots,(k(i+1)+j)(q+1)\}$ are contained in $H$ as well as the integer $((k+1)(i+1))(q-2)$ and the set $\{(k+1)(i+1)(q-2)+2,\dots,(k+1)(i+1)(q+1)\}$. We have so far shown this for $k=0$. If the claim is true for some $k-1 < m/(i+1)-1$, adding $(i+1)(q-2)$ and the integers in $\{(i+1)(q-2)+2,\dots,(i+1)(q+1)\}$, immediately shows that the claim is true for $k$ as well. This proves the claim. For $k=m/(i+1)-1$, we obtain that $\{m(q-2)+2,\dots,m(q+1)\}$, which contains $q$ consecutive integers, is a  subset of $H$. This shows that all integers greater than or equal to $m(q-2)+2=(m-1)(q+1)+2$ are in $H$. Estimating the number of gaps is now done very similarly as in the proof of Theorem \ref{thm:special:short:orbit}. The number of gaps of the semigroup there is in fact exactly the same as those of the semigroup $H$ constructed here: in the proof of Theorem for all $k=1,\dots,m/(i+1)-1$, the integer $k(i+1)(q-2)$ was a gap, while $k(i+1)(q-2)+1$ was not, while now $k(i+1)(q-2)$ is in $H$ and $k(i+1)(q-2)+1$ is not. Hence $g(H) \le g(\X_3)$ again holds.

We are left to prove the theorem if $i=m-1$. Using exactly the same approach as above, we can show that $H:=\langle q-1, q, q+1, (q-1) +j(q-2) \mid j=1,\ldots, m-2\rangle$ is contained in $H(P_{(a,b)})$. Now note that $H$ is exactly the same semigroup as the one occurring in Theorem \ref{thm:special:short:orbit}. Hence $g(H) \le g(\X_3)$ holds in this case as well.
\end{proof}

\subsection{Some remarks on rational Weierstrass points}

From the previous two subsections, we have a complete determination of all types of Weierstrass semigroups that occur among them and how many points attain a given type. To avoid trivial cases, we assume $q \ge 5$.
\begin{thm}\label{thm:summinguprational}
The number of distinct Weierstrass semigroups $H(P)$ among $P \in \X_3(\F_{q^2})$ is exactly the same as the number of divisors of $m$. The semigroups that occur and the $P\in \X_3(\F_{q^2})$ for which they occur are:
\begin{itemize}
\item $H(P)= \langle q-2,q,q+1 \rangle$ for $q+1$ many $P \in {\mathcal O}$.
\item $H(P) = \langle q, q+1, (q-1) +j(q-2),(q-1)+i(q-2)-1 \mid j=0,\ldots, i-1\rangle$, where $1 \le i \le m-2$ and $i+1$ divides $m$, for the $(q+1)^2\varphi(i+1)$ many $P \in \X_3(\F_{q^2})$ for which $\alpha(P)$ has $\PP$-order $i$.
\item $H(P) = \langle q, q+1, (q-1) +j(q-2) \mid j=0,\ldots, m-2\rangle$ for the $(q+1)^2\varphi(m)$ many $P \in \X_3(\F_{q^2})$ for which $\alpha(P)$ has $\PP$-order $m-1$ as well as for the $2m(q+1)$ many $P \in \X_3(\F_{q^2})$ for which $\al(P)^2-\al(P)+1=0$.
\end{itemize}
\end{thm}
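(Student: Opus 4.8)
The plan is to assemble Theorem \ref{thm:summinguprational} purely as a bookkeeping corollary of the earlier results, with essentially no new computation. First I would recall that $\X_3(\F_{q^2})$ is partitioned into three kinds of points: the points of $\mathcal O$, the points $P_{(a,b)} \in \X_3(\F_{q^2}) \setminus \mathcal O$ with $\al(P)^2 - \al(P) + 1 = 0$, and the points $P_{(a,b)} \in \X_3(\F_{q^2}) \setminus \mathcal O$ with $\al(P)^2 - \al(P) + 1 \neq 0$. For the first kind, Theorem \ref{thm:O} gives $H(P) = \langle q-2, q, q+1 \rangle$, and Corollary \ref{cor:orbitO} gives $|\mathcal O| = q+1$. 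For the third kind, Lemma \ref{lem:Porder_rational} tells us that each such $P$ has a well-defined $\PP$-order $i$, that $\F_{q^2}$-rationality forces $i+1 \mid m$ (equivalently $1 \le i \le m-1$ with $i+1 \mid m$), and that the number of such $P$ with a given $\PP$-order $i$ is $(q+1)^2 \varphi(i+1)$ (note $\gcd(i+1,p) = 1$ automatically since $i+1 \mid m \mid q+1$ and $p \nmid q+1$). Theorem \ref{thm:rationalcase:other:semigroups} then gives the semigroup: for $i \le m-2$ it is $\langle q, q+1, (q-1)+j(q-2), (q-1)+i(q-2)-1 \mid j = 0,\ldots,i-1 \rangle$, and for $i = m-1$ (which is a divisor case precisely when $m \mid m$, always) it is $\langle q, q+1, (q-1)+j(q-2) \mid j = 0,\ldots,m-2 \rangle$. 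For the second kind, Theorem \ref{thm:special:short:orbit} gives exactly this last semigroup, and the count $2m(q+1)$ comes from: $\al^2 - \al + 1 = 0$ has two solutions, each $\al = a^m/(1+a^m)$ yields $m$ values of $a$, and each $a$ yields $q+1$ values of $b$ (all automatically in $\F_{q^2}$ by the converse direction of Lemma \ref{lem:Porder_rational}).

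Next I would verify that the enumeration of semigroups is correct, i.e. that the map sending a divisor of $m$ to a semigroup is a bijection onto the set of semigroups occurring. The divisor $d = 1$ corresponds to $\mathcal O$ and the semigroup $\langle q-2, q, q+1 \rangle$; a divisor $d$ of $m$ with $2 \le d \le m/2$ corresponds to $i = d-1$ (so $1 \le i \le m/2 - 1 \le m-2$) and the semigroup with generator set indexed by $i$; and the divisor $d = m$ corresponds jointly to the $\PP$-order $m-1$ points and the $\al^2 - \al + 1 = 0$ points, both of which carry the semigroup $\langle q, q+1, (q-1)+j(q-2) \mid j = 0,\ldots,m-2 \rangle$. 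I would then argue injectivity: the semigroup $\langle q-2, q, q+1 \rangle$ contains $q-2$, whereas none of the other semigroups contains $q-2$ (their smallest nonzero elements are $q-1, q, q+1$ and the next candidate $2(q-2) = 2q-4 < q-1$ only when $q < 3$; for $q \ge 5$ one checks $q-2 \notin \langle q, q+1, (q-1)+j(q-2)-\text{stuff}\rangle$ directly since all generators are $\ge q-1$). For two semigroups of the ``$i$-type'' with parameters $i < i'$, both at most $m-2$: the element $(q-1) + i(q-2) - 1$ lies in the one with parameter $i$; I would check it is a genuine gap of the one with parameter $i'$ by noting that the corresponding point has the semigroup described and $(q-1)+i(q-2)-1$ would have to be a non-gap, but the explicit gap structure in the proof of Theorem \ref{thm:rationalcase:other:semigroups} (where $k(i'+1)(q-2)$ is a non-gap but $k(i'+1)(q-2)+1$ is a gap, and more relevantly the small gaps below $(q-1)+(i'-1)(q-2)$) excludes it — concretely, the only non-gaps below $i'(q+1)$ in that semigroup are the listed generators and their sums, and a parity/size count shows $(q-1)+i(q-2)-1$ is not among them when $i < i'$. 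Finally the ``$i$-type'' semigroups are distinguished from the top semigroup $\langle q, q+1, (q-1)+j(q-2) \mid j=0,\ldots,m-2\rangle$ by the presence of the extra generator $(q-1)+i(q-2)-1$, which one checks is a gap of the top semigroup.

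The main obstacle I anticipate is precisely this last injectivity verification: showing that the semigroups attached to distinct divisors are genuinely distinct, and in particular that the ``anomalous'' generator $(q-1)+i(q-2)-1$ is not already forced into a semigroup with a different parameter. This is not hard in principle — it follows from the explicit gap analysis already carried out in the proofs of Theorems \ref{thm:special:short:orbit} and \ref{thm:rationalcase:other:semigroups}, where the full set of non-gaps below $(m-1)(q+1)+2$ is described layer by layer — but it does require carefully extracting from those proofs the statement ``$(q-1)+i(q-2)-1$ is a non-gap of $H(P)$ if and only if the $\PP$-order of $\al(P)$ is exactly $i$'' (the ``only if'' being the content we need here). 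I would phrase this as a short lemma: in the semigroup of Theorem \ref{thm:rationalcase:other:semigroups} with parameter $i'$, every non-gap less than $i'(q-2)$ is a non-negative integer combination of $q-1, q, q+1$ alone, hence lies in intervals $\{j(q-2)+1,\dots,j(q+1)\}$, and $(q-1)+i(q-2)-1 = i(q-2)+q-2$ fails to be in such an interval for $i < i'$ because it equals $(i+1)(q-2) - 1$, which is one less than the left endpoint of the $(i+1)$-st interval and is not reachable. This pins down $i$ from $H(P)$, hence pins down the divisor $i+1$ of $m$ (or flags the top case), completing the bijection. Everything else is direct substitution of the earlier theorems and elementary counting.
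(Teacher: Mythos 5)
Your proposal is correct and follows essentially the same route as the paper: both assemble Theorems \ref{thm:O}, \ref{thm:special:short:orbit} and \ref{thm:rationalcase:other:semigroups} together with the counts from Corollary \ref{cor:orbitO} and Lemma \ref{lem:Porder_rational} (including the observation that rational $\PP$-orders correspond to divisors $i+1\ge 2$ of $m$ and the $2m(q+1)$ count for $\al^2-\al+1=0$), your only addition being an explicit verification that the listed semigroups are pairwise distinct, which the paper leaves implicit. That distinctness argument is sound; just note the small slip that $(q-1)+i(q-2)-1=(i+1)(q-2)$ rather than $(i+1)(q-2)-1$, which is indeed one less than the left endpoint $(i+1)(q-2)+1$ of the interval $\{(i+1)(q-2)+1,\dots,(i+1)(q+1)\}$, so your conclusion stands.
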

\begin{proof}
First of all, Theorems \ref{thm:O}, \ref{thm:special:short:orbit} and \ref{thm:rationalcase:other:semigroups} combined describe all possible Weierstrass semigroups that occur among $P \in \X_3(\F_{q^2}).$ Lemma \ref{lem:Porder_rational} implies that the only possible $\PP$-orders $i$ for $P \in \X_3(\F_{q^2})$ correspond to divisors $i+1 \ge 2$ of $m$. Therefore the total number of possible Weierstrass semigroups is exactly the number of divisors of $m$, where the divisor $1$ counts the semigroup $\langle q-2,q,q+1 \rangle.$

As for the number of $P \in \X_3(\F_{q^2})$ attaining one particular type: we know that $|\mathcal O|=q+1$, while Lemma \ref{lem:Porder_rational} implies how many $P$ have $\PP$-order equal to a given $i$. The only number of points left to determine is those $P \in \X_3(\F_{q^2})$ such that $\al(P)^2-\al(P)+1=0$. Using that $\al(P_{(a,b)})=a^m/(1+a^m),$ we see that $\al(P_{(a,b)})^2-\al(P_{(a,b)})+1=0$ if and only if $a^{2m}+a^m+1=0$ and $b^{q+1}=-1$. Hence, for exactly $2m(q+1)$ many $P \in \X_3(\F_{q^2})$ one has $\al(P)^2-\al(P)+1=0$.
\end{proof}

\begin{rem}
It is not hard to see that the indicated generators in Theorem \ref{thm:summinguprational} are in all cases a minimal set of generators. Since $\X_3$ is a maximal curve over $\F_{q^2}$, its number of $\F_{q^2}$-rational points is equal to $q^2+1+2qg(\X_3)=\frac{(q+1)(q^2+q+3)}{3}$. Note as a sanity check that indeed, $$q+1+ \sum_{i=1;i+1|m}^{m-1} (q+1)^2 \varphi(i+1) + 2m(q+1) = q^2+1+2qg(\X_3),$$ using the equation $\sum_{d|m} \varphi(d)=m$ where the sum is over all divisors of $m$.

Also the multiplicity (i.e., the smallest positive element) of the semigroups is easy to determine using Theorem \ref{thm:summinguprational}: it is $q-1$, unless $P \in {\mathcal O}$ in which case it is $q-2$. Another parameter of a numerical semigroup is its conductor $c$. This is the smallest nonnegative integer $c$ such that $\mathbb{Z}_{ \ge c}$ is contained in the semigroup. Since $(q-2)D_\infty$ is a canonical divisor by Lemma \ref{lem:canonicaldiv}, $3D_\infty \sim (q+1)P_{(a,0)}$ by equation \eqref{eq:div:xa}, and for any $P \in \X_3(\F_{q^2})$ by the Fundamental Equation $(q+1)P \sim (q+1)P_{(a,0)}$, we see that $(q-2)mP$ is a canonical divisor for all $P \in \X_3(\F_{q^2})$. This implies that $H(P)$ is symmetric for all $P \in \X_3(\F_{q^2})$. In particular the largest gap in $H(P)$ is $2g(\X_3)-1$ for all $P \in \X_3(\F_{q^2})$, implying that the conductor of $H(P)$ is $2g(\X_3)$.
\end{rem}

\section{Weierstrass semigroups at the non-$\F_{q^2}$-rational points of $\X_3$}
\label{sec:nonrational}
In this section, we compute the Weierstrass semigroups at all the remaining points of $\X_3$, namely at all the non-$\F_{q^2}$-rational points. We start by computing the semigroup for the generic case, i.e., for the non-Weierstrass points of the curve and, finally, we determine the semigroups for the non-$\F_{q^2}$-rational Weierstrass points.

\subsection{The generic case}

\begin{thm} \label{maingapsgeneric}
Let $P_{(a,b)} \in \mathcal{X}_3(\overline{\mathbb{F}}_{q^2}) \setminus \mathcal{X}_3(\mathbb{F}_{q^2})$ such that $\PP_{j}(\al) \ne 0$ for all $j=2,\ldots,m-1$. Then
$$G(P_{(a,b)})=\{jq+k \mid j=0,\ldots,m-2, \ k=1,\ldots,q-3j-2\}\cup \{(m-1)q+1\},$$
that is
$$H(P_{(a,b)})=\{0, (j+1)(q-3)+2+k, (m-1)q+2,\ldots \mid j=0,\ldots, m-2, \ k=0,\ldots,3j+1\}.$$
\end{thm}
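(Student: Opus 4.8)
The plan is to prove that $G(P_{(a,b)})$ equals the claimed set by exhibiting, for each of the claimed gaps $n$, a function $h\in L((q-2)D_\infty)$ with $v_{P_{(a,b)}}(h)=n-1$; Corollary \ref{cor:maintoolgaps} then forces each such $n$ to be a gap, and since the exhibited gaps already number $g(\X_3)$, they are all of them. Write $P:=P_{(a,b)}$. First I assemble the tools. Since $P$ is non-$\F_{q^2}$-rational, Lemma \ref{lem:Porder_rational} gives $\al(P)^2-\al(P)+1\neq 0$ and $\al(P)\notin\{0,1,-\zeta_3,-\zeta_3^2\}$, so $\al(P)$ has a $\PP$-order, which by the hypothesis $\PP_j(\al(P))\neq 0$ for $j=2,\dots,m-1$ is at least $m-1$; hence Theorem \ref{thm:fi} supplies functions $f_0,\dots,f_{m-2}$ with $f_j\in L((3j+3)D_\infty)$, $v_P(f_j)=3j+2$, and in particular $f_j\in L((q-2)D_\infty)$ as $3j+3\le 3m-3=q-2$. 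As $P\notin\mathcal{O}$ (the points of $\mathcal{O}$ being $\F_{q^2}$-rational), equation \eqref{eq:div:xa} gives $x_a\in L(3D_\infty)$ with $v_P(x_a)=1$, and \eqref{eq:div:f0}, \eqref{eq:f0:exp} give $v_P(f_0)=2$. Finally, because $P$ is non-rational, equation \eqref{eq:F} and the divisor computed immediately after it provide $F_P:=\phi_P\cdot x_{\bar a}$ with $(F_P)=qP+\Phi(P)-3D_\infty$; thus $F_P\in L(3D_\infty)$, $v_P(F_P)=q$, and $F_P^{\,j}\in L(3jD_\infty)$ has $v_P(F_P^{\,j})=jq$. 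This last function is the essential device: multiplying by $F_P$ shifts the vanishing order at $P$ by $q$ at a cost of only three poles along $D_\infty$.

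The technical heart is the following: for every $\ell$ with $1\le\ell\le m-1$ and every $e$ with $0\le e\le 3\ell-1$ there is a function $g\in L(3\ell D_\infty)$ with $v_P(g)=e$. To see it, write $e=3s+r$ with $r\in\{0,1,2\}$ and take $g=1$ if $e=0$, $g=x_a$ if $e=1$, $g=f_s$ if $r=2$, $g=f_{s-1}f_0$ if $r=1$ and $s\ge 1$, and $g=f_{s-1}x_a$ if $r=0$ and $s\ge 1$. In each case the relevant index ($s$ or $s-1$) is at most $m-2$ so the needed $f_\bullet$ exists; additivity of valuations under products gives $v_P(g)=e$ exactly; and a one-line check of pole orders (using $e\le 3\ell-1$ when $r=2$, $e\le 3\ell-2$ when $r=1$, $e\le 3\ell-3$ when $r=0$, which make the pole order $e+1$, $e+2$, $e+3$ respectively at most $3\ell$) shows $g\in L(3\ell D_\infty)$.

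Now fix $j$ with $0\le j\le m-2$ and $k$ with $1\le k\le q-3j-2$, and apply the previous step with $\ell=m-1-j\ge 1$ and $e=k-1$ (note $0\le e\le q-3j-3=3\ell-1$) to obtain $g\in L(3\ell D_\infty)$ with $v_P(g)=k-1$. Then $h:=F_P^{\,j}g$ lies in $L(3(j+\ell)D_\infty)=L((3m-3)D_\infty)=L((q-2)D_\infty)$ and has $v_P(h)=jq+k-1$, so $jq+k$ is a gap by Corollary \ref{cor:maintoolgaps}; likewise $F_P^{\,m-1}\in L(3(m-1)D_\infty)=L((q-2)D_\infty)$ has $v_P=(m-1)q$, so $(m-1)q+1$ is a gap. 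Within a fixed $j$ the vanishing orders run through the consecutive integers $jq,\dots,jq+q-3j-3$, an interval contained in $[jq,(j+1)q)$, so all these gaps are distinct; their number is $\sum_{j=0}^{m-2}(q-3j-2)+1=\frac{3m(m-1)}{2}+1=g(\X_3)$. Since $H(P)$ has exactly $g(\X_3)$ gaps, this set is precisely $G(P)$, which is the first displayed formula. For the second, one passes to the complement: the gaps occur in the blocks $\{jq+1,\dots,(j+1)q-3j-2\}$ for $j=0,\dots,m-2$ together with the single value $(m-1)q+1$, separated by the runs of non-gaps $\{(j+1)(q-3)+2,\dots,(j+1)q\}$ of length $3j+2$, with every integer $\ge (m-1)q+2$ a non-gap.

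The main obstacle is conceptual rather than computational: one must recognize that the only way to reach vanishing orders as large as $(m-1)q$ while keeping the pole order along $D_\infty$ within the budget $q-2=3(m-1)$ is to factor through the function $F_P$ of \eqref{eq:F} coming from the Fundamental Equation — the local building blocks $x_a,f_0,f_1,\dots,f_{m-2}$ alone only reach vanishing order $q-3$. The residual difficulty is the bookkeeping in the heart of the argument, namely verifying in each residue class modulo $3$ that the natural product of $f_\bullet$'s, $x_a$ and $f_0$ does not overshoot $L(3\ell D_\infty)$; the tight cases $e=3\ell-1,3\ell-2,3\ell-3$, where respectively $f_{\ell-1}$, $f_{\ell-2}f_0$ and $f_{\ell-2}x_a$ exactly saturate the bound, are where this is most delicate.
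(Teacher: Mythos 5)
Your proposal is correct and follows essentially the same route as the paper: both exhibit, for each putative gap $n=jq+k$, a function $F_{P_{(a,b)}}^{j}\cdot g\in L((q-2)D_\infty)$ with $v_{P_{(a,b)}}=n-1$ (splitting on $k\bmod 3$ and using the $f_\bullet$ from Theorem \ref{thm:fi}), invoke Corollary \ref{cor:maintoolgaps}, and conclude by counting $g(\X_3)$ gaps. The only differences are cosmetic — you use $x_a$ and $f_0$ where the paper uses $y-b$ and $t_{P_{(a,b)}}$ (the latter being a scalar multiple of $f_0$), and you package the case analysis as a single auxiliary claim about $L(3\ell D_\infty)$.
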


\begin{proof}
Let $G:=\{jq+k \mid j=0,\ldots,m-2, \ k=1,\ldots,q-3j-2\}\cup \{(m-1)q+1\}$ be the putative set of gaps. Direct computations show that $|G|=1+\sum_{j=0}^{m-2}(q-(3j+2))=g(\mathcal{X}_3)$.

We need to prove that, for every $g\in G$, there exists a function $h_g\in L((q-2)D_\infty)$ such that $v_P(h_g)=g-1$.

Let $g=jq+k\in G$. We distinguish the following cases.
\begin{enumerate}
    \item  If $\left\lfloor\frac{k}{3} \right\rfloor \neq 0$, then we define:
    \begin{equation*}
h_g := \begin{cases} F_{P_{(a,b)}}^j \cdot f_{\left\lfloor\frac{k}{3} \right\rfloor-1} \ &\mbox{if} \ k \equiv 0 \pmod 3,\\[15pt]
F_{P_{(a,b)}}^j \cdot (y-b)\cdot f_{\left\lfloor\frac{k}{3} \right\rfloor-1} \ &\mbox{if} \ k \equiv 1 \pmod 3,\\[15pt]
F_{P_{(a,b)}}^j \cdot t_{P_{(a,b)}}\cdot f_{\left\lfloor\frac{k}{3} \right\rfloor-1} \ &\mbox{if} \ k \equiv 2 \pmod 3.\\
\end{cases}
    \end{equation*}
    \item If $\left\lfloor\frac{k}{3} \right\rfloor = 0$, we define instead:
        \begin{equation*}
h_g := \begin{cases}
F_{P_{(a,b)}}^j \ &\mbox{if} \ k = 1,\\[15pt]
F_{P_{(a,b)}}^j \cdot (y-b) \ &\mbox{if} \ k = 2.\\
\end{cases}
    \end{equation*}
\end{enumerate}
Here, the function $f_{\left\lfloor\frac{k}{3} \right\rfloor-1}$ is one of the functions $f_i$ constructed in Theorem \ref{thm:fi} and the function $F_{P_{(a,b)}}$ is as defined in equation \eqref{eq:F}.

Note that, as $j=0,\ldots,m-2$, for $k=3,\ldots,q-3j-2$ it holds that
\begin{equation*}
        0 \leq \left\lfloor\frac{k}{3} \right\rfloor-1 \leq \left\lfloor\frac{q-3j-2}{3} \right\rfloor-1 = \frac{q-2}{3} -j -1 = m-2-j\leq m-2,
\end{equation*}
hence the function $h_g$ is well-defined, for any $i=0,\ldots,m-2$ and $k=1,\ldots,q-3j-2$.
Indeed, defining the function $h_g$ in this way, for any $g=jq+k\in G$, we have what follows.

\noindent
{\bf Case 1:} $\left\lfloor\frac{k}{3} \right\rfloor \neq 0$.

If $k \equiv 0 \pmod 3$, then
    $$v_P(h_{g})=jq+3\bigg(\left\lfloor\frac{k}{3} \right\rfloor-1\bigg)+2= jq + k -1$$
and
    $$(h_{g})_\infty\leq \left(3j+3\bigg(\left\lfloor\frac{k}{3} \right\rfloor-1+1\bigg)\right)D_\infty = (3j+k)D_\infty\leq (3j+q-3j-2)D_\infty = (q-2)D_\infty.$$

If $k \equiv 1 \pmod 3$, then
    $$v_P(h_{g})=jq+3\bigg(\left\lfloor\frac{k}{3} \right\rfloor-1\bigg)+2+1= jq + (k -1) -3 +3 = jq + k -1$$
and
\begin{eqnarray*}
(h_{g})_\infty & \leq & \left(3j+3\bigg(\left\lfloor\frac{k}{3} \right\rfloor-1+1\bigg)+2\right)D_\infty = (3j+k+1)D_\infty\\
 & \leq & (3j+q-3j-4+1)D_\infty = (q-3)D_\infty,
\end{eqnarray*}
    where the last inequality follows from the fact that $q-3j-2 \equiv 0 \pmod 3$, hence if $k\equiv 1 \pmod 3$, then $k\leq (q-3j-2)-2=q-3j-4$.

If $k \equiv 2 \pmod 3$, then
    $$v_P(h_{g})=jq+3\bigg(\left\lfloor\frac{k}{3} \right\rfloor-1\bigg)+2+2= jq + (k -2) -3 +4 = jq + k -1$$
and
\begin{eqnarray*}
(h_{g})_\infty & \leq & \left(3j+3\bigg(\left\lfloor\frac{k}{3} \right\rfloor-1+1\bigg)+3\right)D_\infty = (3j+k+1)D_\infty \\
 & \leq & (3j+q-3j-3+1)D_\infty = (q-2)D_\infty,
\end{eqnarray*}

    where the last inequality follows from the fact that $q-3j-2 \equiv 0 \pmod 3$, hence if $k\equiv 2 \pmod 3$, then $k\leq (q-3j-2)-1=q-3j-3$.

\noindent
{\bf Case 2:} $\left\lfloor\frac{k}{3} \right\rfloor = 0$.

If $k=1$, then
        $$v_P(h_{g})=jq$$
and
    $$(h_{g})_\infty\leq (3j)D_\infty \leq ((q+1) - 6)D_\infty = (q-5)D_\infty.$$

If $k=2$, then
        $$v_P(h_{g})=jq + 1$$
        and
        $$(h_{g})_\infty\leq (3j + 2)D_\infty \leq ((q+1) - 6 + 2)D_\infty = (q-3)D_\infty.$$
\end{proof}

Since the Weierstrass semigroup at all but a finite number of points of $\X_3$ is as described in Theorem \ref{maingapsgeneric}, we call
\begin{equation*}
    H_{gen}:=\{0, (j+1)(q-3)+2+k, (m-1)q+2,\ldots \mid j=0,\ldots, m-2, \ k=0,\ldots,3j+1\}
\end{equation*}
the \emph{generic} Weierstrass semigroup of $\X_3$ and
\begin{equation*}
    G_{gen}:=\{jq+k \mid j=0,\ldots,m-2, \ k=1,\ldots,q-3j-2\}\cup \{(m-1)q+1\}
\end{equation*}
the \emph{generic} set of gaps of $\X_3$.

\subsection{The Weierstrass semigroups at the non-$\F_{q^2}$-rational Weierstrass points}

\begin{thm}
\label{thm:non:rational:weierstrass}
Let $P_{(a,b)} \in \mathcal{X}_3(\overline{\mathbb{F}}_{q^2}) \setminus \mathcal{X}_3(\mathbb{F}_{q^2})$ and $i$ the $\PP$-order of $\alpha(P_{(a,b)})$. Suppose that $i \le m-2$.
Then
\begin{equation}
\label{eq:gaps:nonrational:some:i}
\begin{split}
    G(P_{(a,b)})= \left(G_{gen} \setminus \left \{(m-2-i-\ell(i+1))q + (\ell+1)(3i+3) \mid \ell = 0,\ldots, \left\lfloor \frac{m-2-i}{i+1}\right \rfloor\right \} \right) \\ \cup \left \{(m-2-i-\ell(i+1))q + (\ell+1)(3i+3)+1 \mid \ell = 0,\ldots, \left\lfloor \frac{m-2-i}{i+1}\right \rfloor\right \},
\end{split}
\end{equation}
that is
\begin{equation*}
\begin{split}
    H(P_{(a,b)})= \left(H_{gen} \setminus \left \{(m-2-i-\ell(i+1))q + (\ell+1)(3i+3)+1 \mid \ell = 0,\ldots, \left\lfloor \frac{m-2-i}{i+1}\right \rfloor\right \} \right) \\ \cup \left \{(m-2-i-\ell(i+1))q + (\ell+1)(3i+3) \mid \ell = 0,\ldots, \left\lfloor \frac{m-2-i}{i+1}\right \rfloor\right \}.
\end{split}
\end{equation*}
\end{thm}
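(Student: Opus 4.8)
The plan is to show the stated set equals $G(P_{(a,b)})$ by a counting argument. Abbreviate $j_\ell:=m-2-i-\ell(i+1)$ and $g_\ell:=j_\ell q+(\ell+1)(3i+3)$ for $\ell=0,\dots,\lfloor(m-2-i)/(i+1)\rfloor$, so that the asserted gap set is $G:=\bigl(G_{gen}\setminus\{g_\ell\}\bigr)\cup\{g_\ell+1\}$. First I would record the identity $(\ell+1)(3i+3)=3(\ell+1)(i+1)=q-3j_\ell-2$ (using $3m=q+1$); it exhibits $g_\ell=j_\ell q+(q-3j_\ell-2)$ as the largest gap in the ``$j_\ell$-th block'' of $G_{gen}$, so $g_\ell\in G_{gen}$, while $g_\ell+1=(j_\ell+1)q-3j_\ell-1\notin G_{gen}$. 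Since $0<(\ell+1)(3i+3)<q$, the numbers $g_\ell$ lie in pairwise distinct intervals $[\,j_\ell q,(j_\ell+1)q\,)$, and likewise the $g_\ell+1$; hence $|G|=|G_{gen}|=g(\X_3)$ by Theorem~\ref{maingapsgeneric}. As $|G(P_{(a,b)})|=g(\X_3)$ by the Weierstrass gap theorem, it remains only to prove the inclusion $G\subseteq G(P_{(a,b)})$. Because $P_{(a,b)}\notin\mathcal{O}_\infty$ (it is not $\F_{q^2}$-rational), Corollary~\ref{cor:maintoolgaps} reduces this to producing, for each $n\in G$, a function $h_n\in L((q-2)D_\infty)$ with $v_{P_{(a,b)}}(h_n)=n-1$.

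The functions I would use all have poles supported on $D_\infty$: the function $F_{P_{(a,b)}}$ of \eqref{eq:F}, with $v_{P_{(a,b)}}(F_{P_{(a,b)}})=q$ and $(F_{P_{(a,b)}})_\infty=3D_\infty$; the functions $f_0,\dots,f_{i-1}$ of Theorem~\ref{thm:fi}, with $v_{P_{(a,b)}}(f_d)=3d+2$ and $(f_d)_\infty\le(3d+3)D_\infty$; the function $f_i$, with $v_{P_{(a,b)}}(f_i)=3i+3$ and $(f_i)_\infty\le(3i+3)D_\infty$; and the ``small'' functions $x_a$ (a local parameter at $P_{(a,b)}$, so $v_{P_{(a,b)}}(x_a)=1$ and $(x_a)_\infty=3D_\infty$) and $f_0$ (with $v_{P_{(a,b)}}(f_0)=2$ by Proposition~\ref{prop:power-series}). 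For the new elements $n=g_\ell+1$ I would take the single witness $h_n:=F_{P_{(a,b)}}^{\,j_\ell}f_i^{\,\ell+1}$: its vanishing order at $P_{(a,b)}$ is $j_\ell q+(\ell+1)(3i+3)=g_\ell=n-1$, and its pole divisor is $\le\bigl(3j_\ell+(\ell+1)(3i+3)\bigr)D_\infty=(q-2)D_\infty$ by the identity above.

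For $n=jq+k\in G_{gen}\setminus\{g_\ell\}$ (with $1\le k\le q-3j-2$, or $n=(m-1)q+1$) I would revisit the construction in the proof of Theorem~\ref{maingapsgeneric}; the sole difference is that the functions $f_d$ with $i<d\le m-2$ occurring there are no longer available and must be simulated by products. Writing $d:=\lfloor k/3\rfloor-1=c(i+1)+r$ with $0\le r\le i$: if $r\le i-1$, then $f_i^{\,c}f_r$ has exactly the vanishing order $3d+2$ and the pole bound $(3d+3)D_\infty$ of the missing $f_d$, so the recipe of Theorem~\ref{maingapsgeneric} applies verbatim. If $r=i$, then $\lfloor k/3\rfloor=(c+1)(i+1)$ and $k=3(c+1)(i+1)+(k\bmod 3)$: for $k\equiv1\pmod3$ one takes $h_n:=F_{P_{(a,b)}}^{\,j}f_i^{\,c+1}$ and for $k\equiv2\pmod3$ one takes $h_n:=F_{P_{(a,b)}}^{\,j}f_i^{\,c+1}x_a$, both of vanishing order $n-1$ and both inside $L((q-2)D_\infty)$ (the largest admissible $j$ only forces $3j+3(c+1)(i+1)=q-5$, leaving room); while for $k\equiv0\pmod3$ one has $n=jq+3(c+1)(i+1)$, with $n=g_c$ precisely when $j$ equals the maximal admissible block index $j_c=m-2-i-c(i+1)$, so for $n\ne g_\ell$ necessarily $j<j_c$ and one can take $h_n:=F_{P_{(a,b)}}^{\,j}f_i^{\,c}f_{i-1}f_0x_a$, whose vanishing order is $jq+3(c+1)(i+1)-1=n-1$ and whose pole divisor is $\le\bigl(3j+3(c+1)(i+1)+3\bigr)D_\infty\le(q-2)D_\infty$. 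The remaining cases $k\in\{1,2\}$ and $n=(m-1)q+1$ are exactly as in Theorem~\ref{maingapsgeneric} ($h_n=F_{P_{(a,b)}}^{\,j}$, resp.\ $F_{P_{(a,b)}}^{\,j}x_a$). This produces the required $h_n$ for every $n\in G$, hence $G\subseteq G(P_{(a,b)})$ and therefore $G=G(P_{(a,b)})$.

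I expect the third paragraph to be the main obstacle: it requires organising the case analysis on $k\bmod 3$ and on the decomposition $d=c(i+1)+r$, and verifying in each case that the substitute function lies in $L((q-2)D_\infty)$. The conceptual point, which makes everything fit together, is that the unique configuration in which the pole budget fails — the case $k\equiv0$, $r=i$, $j=j_c$ — is exactly $n=g_c$, i.e.\ precisely the element deleted from the generic gap set; in that configuration the natural candidate $F_{P_{(a,b)}}^{\,j_c}f_i^{\,c+1}$ vanishes to order $g_c$ rather than $g_c-1$, which is exactly why it certifies instead that $g_c+1$ is a gap. Thus the ``shift up by one'' from $g_\ell$ to $g_\ell+1$ is accounted for by this same single replacement witness.
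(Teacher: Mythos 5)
Your proposal is correct and follows essentially the same route as the paper's proof: a cardinality count reducing the problem to exhibiting, for each putative gap $n$, a function in $L((q-2)D_\infty)$ vanishing to order $n-1$ at $P_{(a,b)}$ via Corollary~\ref{cor:maintoolgaps}, with witnesses built as products of $F_{P_{(a,b)}}$, powers of $f_i$, the lower $f_r$, and small auxiliary functions, organised by $k \bmod 3$ and the residue of $\lfloor k/3\rfloor$ modulo $i+1$. The only differences are cosmetic choices of auxiliary factors in a couple of sub-cases (e.g.\ $x_a$ where the paper uses $y-b$, and $f_i^c f_{i-1}f_0x_a$ where the paper uses $(y-b)t_{P_{(a,b)}}f_{i-1}f_i^{c}$), which do not affect the argument.
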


\begin{proof}
Let $G$ as in equation \eqref{eq:gaps:nonrational:some:i} be the putative set of gaps.
Since the cardinality of the set
\begin{equation*}
    \left \{(m-2-i-\ell(i+1))q + (\ell+1)(3i+3) \mid \ell = 0,\ldots, \left\lfloor \frac{m-2-i}{i+1}\right \rfloor\right \}
\end{equation*}
is the same as the cardinality of the set
\begin{equation*}
    \left \{(m-2-i-\ell(i+1))q + (\ell+1)(3i+3) + 1 \mid \ell = 0,\ldots, \left\lfloor \frac{m-2-i}{i+1}\right \rfloor\right \},
\end{equation*}
it follows immediately  that $|G(P_{(a,b)})|=|G_{gen}|=g(\X_3)$. Hence, as in the proof of Theorem \ref{maingapsgeneric}, we are now left to show that, for each $g\in G$, there exists a function $h_g$ such that $h_g \in L((q-2)D_\infty)$ and $v_P(h_g)=g-1$.

For any $g=jq+k$, let $\cc:=\left\lfloor\frac{k}{3(i+1)}\right\rfloor$. We can then write
\begin{equation*}
    \left\lfloor \frac{k}{3}\right\rfloor = \cc (i+1) + h,
\end{equation*}
where $h$ is an integer such that $0\leq h \leq i$, and
\begin{equation*}
    k = \left\lfloor \frac{k}{3}\right\rfloor\cdot 3 + r = 3\cc (i+1) + 3h + r,
\end{equation*}
where $r$ is an integer such that $0\leq r \leq 2$.
First note that, with this choice of $\cc$, $0\leq \left\lfloor \frac{k}{3}\right\rfloor-(\cc(i+1)+1)\leq i-1$ for all $k$ such that $\left\lfloor \frac{k}{3}\right\rfloor \neq \cc (i+1)$. Indeed,
\begin{equation*}
    \left\lfloor \frac{k}{3}\right\rfloor-(\cc(i+1)+1) \leq i - 1 \quad \Longleftrightarrow \quad \left\lfloor \frac{k}{3}\right\rfloor  -\cc \leq i + \cc i,
\end{equation*}
hence, as $\left\lfloor \frac{k}{3}\right\rfloor = \cc (i+1) + h$, with $h$ an integer such that $0\leq h\leq i$, we obtain
\begin{equation*}
    \left\lfloor \frac{k}{3}\right\rfloor  -\cc \leq i + \cc i \quad \Longleftrightarrow \quad   \cc (i+1) + h -\cc \leq i + \cc i \quad \Longleftrightarrow \quad   h\leq i,
\end{equation*}
which is satisfied.

We now distinguish the following cases.
\begin{enumerate}
    \item If $\left\lfloor \frac{k}{3}\right\rfloor \neq \cc (i+1)$, then we define:
\begin{equation*}
        h_{g}:=\begin{cases}
            F_{P_{(a,b)}}^j\cdot f_{\left\lfloor \frac{k}{3}\right\rfloor - (\cc(i+1)+1)}\cdot f_i^\cc \quad &\mbox{if} \ k \equiv 0 \pmod {3},\\[15pt]
            F_{P_{(a,b)}}^j\cdot (y-b) \cdot f_{\left\lfloor \frac{k}{3}\right\rfloor - (\cc(i+1)+1)}\cdot f_i^\cc \quad &\mbox{if} \ k \equiv 1 \pmod {3},\\[15pt]
            F_{P_{(a,b)}}^j\cdot t_{P_{(a,b)}} \cdot f_{\left\lfloor \frac{k}{3}\right\rfloor - (\cc(i+1)+1)}\cdot f_i^\cc \quad &\mbox{if} \ k \equiv 2 \pmod {3}.\\
        \end{cases}
    \end{equation*}
    \item If $\left\lfloor \frac{k}{3}\right\rfloor = \cc (i+1)$, we define instead:
\begin{equation*}
        h_{g}:=\begin{cases}
            F_{P_{(a,b)}}^j\cdot (y-b) \cdot t_{P_{(a,b)}} \cdot f_{i-1}\cdot f_i^{\cc-1} \quad &\mbox{if} \ k \equiv 0 \pmod {3} \ \mbox{and} \ j\leq m-2-i,\\[15pt]
            F_{P_{(a,b)}}^j \cdot f_{\frac{k}{3}-1} \ &\mbox{if} \ k \equiv 0 \pmod 3 \ \mbox{and} \ j\geq m-1-i,\\[15pt]
            F_{P_{(a,b)}}^j \cdot f_i^\cc \quad &\mbox{if} \ k \equiv 1 \pmod {3},\\[15pt]
            F_{P_{(a,b)}}^j\cdot (y-b) \cdot f_i^\cc \quad &\mbox{if} \ k \equiv 2 \pmod {3}.\\
        \end{cases}
    \end{equation*}
\end{enumerate}
Indeed, for $g=jq+k\in G$, we have the following situation.

\noindent
{\bf Case 1:} $\left\lfloor \frac{k}{3}\right\rfloor \neq \cc (i+1)$.

If $k \equiv 0 \pmod {3}$, then
\begin{equation*}
    v_{P_{(a,b)}}(h_{g})= jq + 3\left(\left\lfloor \frac{k}{3}\right\rfloor - (\cc(i+1)+1)\right) + 2 + 3\cc(i+1) = jq + k - 3 + 2 = jq + k - 1
\end{equation*}
and
\begin{equation*}
\begin{split}
    (h_{g})_{\infty} &\leq (3j + 3\left(\left\lfloor \frac{k}{3}\right\rfloor - (\cc(i+1)+1)+1\right) + 3\cc(i+1))D_\infty\\
    &=(3j + k)D_\infty\\
    &\leq (3j + q -3j -2)D_\infty = (q-2)D_\infty,\\
\end{split}
\end{equation*}
where the last inequality above follows from the fact that $k\leq q-3j-2$.

If $k \equiv 1 \pmod {3}$, then
\begin{equation*}
    v_{P_{(a,b)}}(h_{g})= jq + 1 + 3\left(\left\lfloor \frac{k}{3}\right\rfloor - (\cc(i+1)+1)\right) + 2 + 3\cc(i+1) = jq + (k - 1) - 3 + 3 = jq + k - 1
\end{equation*}
and
\begin{equation*}
\begin{split}
    (h_{g})_{\infty} &\leq (3j + 2 + 3\left(\left\lfloor \frac{k}{3}\right\rfloor - (\cc(i+1)+1)+1\right) + 3\cc(i+1))D_\infty\\
    &=(3j + k + 1)D_\infty\\
    &\leq (3j + q -3j -3)D_\infty = (q-3)D_\infty,\\
\end{split}
\end{equation*}
where the last inequality follows from the fact that $q-3j-2 \equiv 0 \pmod 3$, hence if $k\equiv 1 \pmod 3$, then $k\leq (q-3j-2)-2=q-3j-4$.

If $k \equiv 2 \pmod {3}$, then
\begin{equation*}
    v_{P_{(a,b)}}(h_{g})= jq + 2 + 3\left(\left\lfloor \frac{k}{3}\right\rfloor - (\cc(i+1)+1)\right) + 2 + 3\cc(i+1) = jq + (k - 2) - 3 + 4  = jq + k - 1
\end{equation*}
and
\begin{equation*}
\begin{split}
    (h_{g})_{\infty} &\leq (3j + 3 + 3\left(\left\lfloor \frac{k}{3}\right\rfloor - (\cc(i+1)+1)+1\right) + 3\cc(i+1))D_\infty\\
    &=(3j + k + 1)D_\infty\\
    &\leq (3j + q -3j -2)D_\infty = (q-2)D_\infty,\\
\end{split}
\end{equation*}
where the last inequality follows from the fact that $q-3j-2 \equiv 0 \pmod 3$, hence if $k\equiv 2 \pmod 3$, then $k\leq (q-3j-2)-1=q-3j-3$.

\noindent
{\bf Case 2:} $\left\lfloor \frac{k}{3}\right\rfloor = \cc (i+1)$.

If $k \equiv 0 \pmod {3}$ and $j\leq m-2-i$, then
\begin{equation*}
    v_{P_{(a,b)}}(h_{g})= jq + 1 + 2 + 3(i-1) + 2 + 3(\cc-1)(i+1) = jq + 3\cc(i+1) - 1 = jq + k - 1
\end{equation*}
and
\begin{equation*}
\begin{split}
    (h_{g})_{\infty} &\leq (3j + 2 + 3 + 3i + 3(\cc-1)(i+1))D_\infty\\
    &=(3j + k + 2)D_\infty\\
    &\leq (3j + q -3j -5+2)D_\infty = (q-3)D_\infty,\\
\end{split}
\end{equation*}
since in this case $k\leq q-3j-3 \equiv 2 \pmod 3$ and hence, as $k\equiv 0 \pmod 3$, then $k\leq (q-3j-3)-2=q-3j-5$.

If $k \equiv 0 \pmod {3}$ and $j\geq m-1-i$, then note that, as $3j\geq q-3i-2$, then $k\leq q-3j-2\leq q-(q-3i-2)-2=3i$ and $\frac{k}{3}-1\leq i-1$. Hence, we have that
\begin{equation*}
    v_{P_{(a,b)}}(h_{g})= jq + 3\left(\frac{k}{3}-1\right) + 2 = jq + k - 1
\end{equation*}
and
\begin{equation*}
\begin{split}
    (h_{g})_{\infty} &\leq (3j + 3\left(\frac{k}{3}\right))D_\infty\\
    &=(3j + k )D_\infty\\
    &\leq (3j + q -3j - 2)D_\infty = (q-2)D_\infty,\\
\end{split}
\end{equation*}
since $k\leq q - 3j - 2$ in this case.
\item If $k \equiv 1 \pmod {3}$, then
\begin{equation*}
    v_{P_{(a,b)}}(h_{g})= jq + 3\cc(i+1) = jq + k - 1
\end{equation*}
as $3\left\lfloor \frac{k}{3}\right\rfloor = 3\cc (i+1) = k - 1$.
Moreover,
\begin{equation*}
\begin{split}
    (h_{g})_{\infty} &\leq (3j + 3\cc(i+1))D_\infty\\
    &=(3j + k - 1)D_\infty\\
    &\leq (3j + q -3j - 2 -1)D_\infty = (q-3)D_\infty,\\
\end{split}
\end{equation*}
since $k\leq q - 3j -2$.

If $k \equiv 2 \pmod {3}$, then
\begin{equation*}
    v_{P_{(a,b)}}(h_{g})= jq + 1 + 3\cc(i+1) = jq + (k - 2) + 1 = jq + k - 1
\end{equation*}
as $3\left\lfloor \frac{k}{3}\right\rfloor = 3\cc (i+1) = k - 2$.
Moreover,
\begin{equation*}
\begin{split}
    (h_{g})_{\infty} &\leq (3j + 2 + 3\cc(i+1))D_\infty\\
    &=(3j + k)D_\infty\\
    &\leq (3j + q -3j - 2)D_\infty = (q-2)D_\infty,\\
\end{split}
\end{equation*}
since $k\leq q - 3j -2$.
\end{proof}

\subsection{Final remarks on the Weierstrass points of $\X_3$}
\label{subsec:remarks}

We finish this section by collecting a few further facts on the Weierstrass points of $\X_3$.

\begin{prop}
Only for $q \in \{2,5,8\}$ are all Weierstrass points of $\X_3$ defined over $\F_{q^2}.$
\end{prop}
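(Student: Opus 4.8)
The plan is to first pin down exactly which points of $\X_3$ are Weierstrass points, to observe that—apart from one family governed by the $\PP$-order—they are all automatically $\F_{q^2}$-rational, and finally to reduce the statement to an elementary divisibility condition on $m=(q+1)/3$.

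I would start by disposing of $q=2$, where $\X_3$ has genus one and hence has no Weierstrass points at all, making the claim vacuous; so assume $q\ge 5$. Next I would sort the points into three groups. By Corollary \ref{cor:orbitO} every point of $\mathcal{O}$ is $\F_{q^2}$-rational, and by Lemma \ref{lem:Porder_rational} so is every $P_{(a,b)}\notin\mathcal{O}$ with $\al(P_{(a,b)})^2-\al(P_{(a,b)})+1=0$; for these points, Weierstrass or not, there is nothing to check. For the remaining $P_{(a,b)}\notin\mathcal{O}$ the value $\al:=\al(P_{(a,b)})$ avoids $\{0,1,-\zeta_3,-\zeta_3^2\}$ and so has a $\PP$-order $i$. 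If $i\ge m-1$ then $\PP_j(\al)\ne 0$ for $j=2,\dots,m-1$, so by Theorem \ref{maingapsgeneric} one has $H(P_{(a,b)})=H_{gen}$ and $P_{(a,b)}$ is not a Weierstrass point. If $i\le m-2$ then Theorem \ref{thm:non:rational:weierstrass} shows $H(P_{(a,b)})\ne H_{gen}$—for instance $G(P_{(a,b)})$ contains $(m-2-i)q+(3i+4)$, which is not a generic gap, since in the row $j=m-2-i$ the generic gaps $jq+k$ only run up to $k=q-3j-2=3i+3$—so $P_{(a,b)}$ is a Weierstrass point. Since Lemma \ref{lem:Porder_rational} says a point of $\PP$-order $i$ is $\F_{q^2}$-rational exactly when $i+1\mid m$, it follows that the non-$\F_{q^2}$-rational Weierstrass points of $\X_3$ are precisely those $P_{(a,b)}\notin\mathcal{O}$ whose $\PP$-order $i$ satisfies $1\le i\le m-2$ and $i+1\nmid m$.

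By Lemma \ref{lem:Porder_rational} a point of $\PP$-order $i$ exists at all iff $\gcd(i+1,p)=1$, so, writing $d=i+1$, the conclusion of the previous paragraph is that all Weierstrass points of $\X_3$ are defined over $\F_{q^2}$ if and only if
\[
\text{every integer } d \text{ with } 2\le d\le m-1 \text{ satisfies } p\mid d \ \text{ or }\ d\mid m .
\]
The last step is to check this holds precisely for $q\in\{5,8\}$. For $q=5$ we have $m=2$ and the range of $d$ is empty; for $q=8$ we have $m=3$, $p=2$, and the unique $d=2$ satisfies $p\mid d$. Conversely, suppose $q\ge 5$ satisfies the displayed condition. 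If $m\le 2$ then $q+1\le 6$, which together with $q\equiv 2\pmod 3$ forces $q=5$. If $m\ge 3$, apply the condition to $d=m-1$ (which is $\ge 2$ and does not divide $m$): we get $p\mid m-1$, and since $3(m-1)=q-2=p^e-2$ with $q=p^e$ this gives $p\mid 2$, hence $p=2$ and $q=2^e$ with $e$ odd (forced by $q\equiv 2\pmod 3$). Then $m=(2^e+1)/3$ is odd, and if $m\ge 5$ the odd integer $d=m-2$ lies in $[2,m-1]$ with $p\nmid d$, forcing $m-2\mid m$, i.e. $m-2\mid 2$, which is impossible for $m\ge 5$. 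Hence $m=3$ and $q=8$, completing the argument.

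The step I expect to be most delicate is the first one: one has to read off from Theorem \ref{thm:non:rational:weierstrass} a clean certificate that the Weierstrass semigroup at \emph{every} point of $\PP$-order $i\le m-2$ genuinely differs from $H_{gen}$ (so that no such point is secretly generic), and one must be careful to route the exceptional cases—$\mathcal{O}$, the locus $\al^2-\al+1=0$, and $q=2$—onto the $\F_{q^2}$-rational side. By contrast, the divisibility endgame is short and elementary once the criterion is isolated.
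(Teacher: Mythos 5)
Your proof is correct, and while the first half coincides with the paper's (both reduce the statement, via Lemma \ref{lem:Porder_rational} and Theorem \ref{thm:non:rational:weierstrass}, to the question of whether there exists $d=i+1$ with $2\le d\le m-1$, $p\nmid d$ and $d\nmid m$), the endgame is genuinely different. The paper settles the arithmetic criterion by a counting bound: $m$ has at most $m/3+1$ proper divisors and there are at most $\lfloor m/p\rfloor$ multiples of $p$ up to $m$, so a valid $d$ exists whenever $m-2>1+m/3+m/p$, which with $p\ge 2$ covers all $q>53$; the remaining cases $q\in\{11,17,23,29,32,41,47,53\}$ are then checked by hand. You instead test the two specific values $d=m-1$ and $d=m-2$: the first forces $p\mid m-1$, hence $p\mid 3(m-1)=q-2$ and so $p=2$, after which the oddness of $m=(2^e+1)/3$ makes $d=m-2$ coprime to $p$ and forces $m-2\mid 2$, leaving only $m\in\{2,3\}$, i.e.\ $q\in\{5,8\}$. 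Your route eliminates the finite verification entirely and is arguably cleaner; the paper's counting argument is blunter but requires no cleverness in choosing witnesses. A further small merit of your write-up is that you make explicit why a point of $\PP$-order $i\le m-2$ really is a Weierstrass point, by exhibiting the concrete gap $(m-2-i)q+3i+4\in G(P_{(a,b)})\setminus G_{gen}$ (its residue $3i+4$ exceeds the bound $q-3(m-2-i)-2=3i+3$ in the row $j=m-2-i$), and why all the exceptional loci ($\mathcal{O}$, the points with $\al^2-\al+1=0$, and $q=2$) land on the $\F_{q^2}$-rational side; the paper leaves these verifications implicit in the phrase ``Lemma \ref{lem:Porder_rational} and Theorem \ref{thm:non:rational:weierstrass} imply.''
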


\begin{proof}
Lemma \ref{lem:Porder_rational} and Theorem \ref{thm:non:rational:weierstrass} imply that a non-rational Weierstrass point exists precisely if there exists $i$ such that $1 \le i \le m-2$, $\gcd(i+1,p)=1$, and $i+1$ does not divide $m$. Since $m$ has at most $m/3+1$ divisors (not counting $m$ itself) and there are at most $\lfloor m/p \rfloor$ multiples of $p$ between $1$ and $m$,  we see that a non-rational Weierstrass point exists if $m-2 > 1+m/3+m/p.$
Since $p \ge 2$, and $m-2 > 1+m/3+m/2$ if and only if $q>53$, this already shows that there exists a non-rational Weierstrass point for all $q>53$. It is trivial to check that $i$ satisfying the conditions exists for $q \in \{11,17,23,29,32,41,47,53\}$, while no such $i$ exists for $i \in \{2,5,8\}.$
\end{proof}

\begin{rem}
It is at this point quite simple to determine the number of distinct possible Weierstrass semigroups $H(P)$ as $P$ varies. Indeed, the possible $\PP$-orders less than or equal to $m-1$ are simply the number of $i$ between $1$ and $m-1$, such that $\gcd(p,i+1)=1$. Counting the semigroup for $P \in {\mathcal O}$ as well, this gives $m-\lfloor m/p \rfloor$ possible semigroups different from the generic semigroup. The generic semigroup corresponds to those points $P$ on $\X_3$ whose $\PP$-order is at least $m$. Hence there are precisely $m-\lfloor m/p \rfloor+1$ possible semigroups.
\end{rem}

\begin{rem}
For $\F_{q^2}$-rational points, we determined the multiplicity and conductors the corresponding Weierstrass semigroups. Using Theorem \ref{maingapsgeneric}, we see that in the generic case, the smallest positive non-gap in $H(P)$ is $q-1$, while the largest gap is $(m-1)q+1$. Hence in the generic case, the multiplicity is $q-1$ and the conductor $(m-1)q+2.$ If $P \in \X_3(\overline{\F}_{q^2}) \setminus \X_3(\F_{q^2})$ has $\PP$-order $i \le m-2$, then Theorem \ref{thm:non:rational:weierstrass} implies quite easily that the largest gap still is $(m-1)q+1$ and therefore that the conductor is $(m-1)q+2.$
\end{rem}

The situation for the multiplicity is more complicated. We show what is going on in the following theorem.

\begin{thm}
Let $P_{(a,b)} \in \X_3(\overline{\F}_{q^2}) \setminus \X_3(\F_{q^2})$. Then the multiplicity of the semigroup $H(P_{(a,b)})$ is $q-2$ or $q-1$. Moreover, the following are equivalent:
\begin{enumerate}
\item The multiplicity of $H(P_{(a,b)})$ is $q-2$.
\item The $\PP$-order $i$ of $\alpha(P_{(a,b)})$ is such that $i+1$ divides $m-1$.
\item $\PP_{m-1}(\alpha(P_{(a,b)}))=0$.
\item The Frobenius of $(a,b)$, that is $\Phi(a,b):=(a^{q^2},b^{q^2})$, lies on the tangent line of the plane curve $y^{q+1}+x^{2m}+x^m=0$ at $(a,b)$.
\end{enumerate}
\end{thm}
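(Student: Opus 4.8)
The plan is to prove the bound on the multiplicity together with the cycle $(1)\Rightarrow(2)\Rightarrow(3)\Rightarrow(4)\Rightarrow(1)$. Write $P:=P_{(a,b)}$, set $\al:=\al(P)$ and $s:=a^m$; since $3m=q+1$ we get $a^{q+1}=s^3$, while the plane equation gives $b^{q+1}=-s(s+1)$. As $P\notin\X_3(\F_{q^2})$, Lemma~\ref{lem:Porder_rational} shows $\al^2-\al+1\ne0$ (so the $\PP$-order $i$ of $\al$ is defined), $i+1\nmid m$, and $i\ne m-1$, hence $i\le m-2$ or $i\ge m$; moreover $s\notin\{0,\zeta_3,\zeta_3^2\}$ (because $\al^2-\al+1=\tfrac{s^2+s+1}{(1+s)^2}$), $b\ne0$, and $s^{q-1}\ne1$ — otherwise $s\in\F_q^\times$, so $a^{q+1},b^{q+1}\in\F_q$, forcing $a^{q^2}=a$ and $b^{q^2}=b$ against non-rationality. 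In particular $\Phi(P)=P_{(a^{q^2},b^{q^2})}\notin\mathcal O$ and $\Phi(P)\ne P$.

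For $(1)\Leftrightarrow(2)$ and the multiplicity I would invoke the classification of $H(P)$ in Theorems~\ref{maingapsgeneric} and~\ref{thm:non:rational:weierstrass}: if $i\ge m$ then $\PP_j(\al)\ne0$ for $j=2,\dots,m-1$, Theorem~\ref{maingapsgeneric} applies, $H(P)=H_{gen}$, and the least positive element of $H_{gen}$ is $q-1$; if $i\le m-2$, Theorem~\ref{thm:non:rational:weierstrass} presents $H(P)$ as $H_{gen}$ with finitely many integers replaced by their successors, and a direct inspection of the sets appearing there shows that the only modification involving an integer $\le q-1$ occurs precisely when $i+1\mid m-1$, where $q-2$ becomes a non-gap and $q-1$ a gap. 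Thus the multiplicity is always $q-2$ or $q-1$, and equals $q-2$ exactly when $i\le m-2$ and $i+1\mid m-1$, i.e.\ condition~$(2)$. For $(2)\Rightarrow(3)$ I set $\omega:=\bigl((\al+\zeta_3)/(\al+\zeta_3^2)\bigr)^3$: by the argument in the proof of Lemma~\ref{lem:Porder_rational} together with Remark~\ref{rem:PQ:existence_of_i} the element $\omega$ has multiplicative order exactly $i+1$, while (since $\al\ne0,1,-\zeta_3^2$ and $3(m-1)=q-2$) one has $\PP_{m-1}(\al)=0\iff(\al+\zeta_3)^{q-2}=(\al+\zeta_3^2)^{q-2}\iff\omega^{m-1}=1\iff i+1\mid m-1$.

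The heart of the argument is $(3)\Rightarrow(4)$, which I would handle by showing that both conditions collapse to the single equation $2s^q+s^{q-1}+s+2=0$. On the one hand, starting from the equivalent form $(\al+\zeta_3)^{q-2}=(\al+\zeta_3^2)^{q-2}$ of $(3)$ and using $\al=s/(1+s)$, $1+\zeta_3=-\zeta_3^2$, $1+\zeta_3^2=-\zeta_3$, $\zeta_3^{q-2}=1$, it becomes $(s-\zeta_3)^{q-2}=(s-\zeta_3^2)^{q-2}$; via $(s-\zeta_3)^{q+1}=(s^q-\zeta_3^2)(s-\zeta_3)$ and $(s-\zeta_3^2)^{q+1}=(s^q-\zeta_3)(s-\zeta_3^2)$, clearing $(s-\zeta_3)^3$ and $(s-\zeta_3^2)^3$ turns it into $(s^q-\zeta_3^2)(s-\zeta_3^2)^2=(s^q-\zeta_3)(s-\zeta_3)^2$, and the difference of the two sides factors as $(\zeta_3-\zeta_3^2)\,s\,(2s^q+s^{q-1}+s+2)$. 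On the other hand, $f_0$ is a nonzero scalar multiple of the tangent function $t_{P_{(a,b)}}$ of~\eqref{eq:tangent}, so $(4)$ amounts to the vanishing of $f_0$ at $\Phi(P)$; rewriting \eqref{eq:f0} as $f_0=(1-\al)\bigl(\tfrac{2s+1}{a}(x-a)+\tfrac{3b^q}{s}(y-b)\bigr)$ and substituting $a^{q^2}=a\,s^{3(q-1)}$, $b^{q^2}=b\,(s(s+1))^{q-1}$, one obtains after simplification (the arising cubic in $s^{q-1}$ has $1$ as a double root)
\[
f_0(\Phi(P))=(1-\al)\,(s^{q-1}-1)^2\bigl((2s+1)s^{q-1}+s+2\bigr).
\]
Since $1-\al\ne0$ and $s^{q-1}\ne1$, $(4)$ is equivalent to $(2s+1)s^{q-1}+s+2=2s^q+s^{q-1}+s+2=0$, the same equation as $(3)$.

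Finally, for $(4)\Rightarrow(1)$: if $\Phi(a,b)$ lies on the tangent line at $(a,b)$ then $v_{\Phi(P)}(f_0)\ge1$, and since $(f_0)=2P+E_0-3D_\infty$ with $P\notin\mathrm{supp}(E_0)$ (Proposition~\ref{prop:divisors-on-X3}) and $\Phi(P)\ne P$, this forces $\Phi(P)\in\mathrm{supp}(E_0)$; combining with $(F_{P_{(a,b)}})=qP+\Phi(P)-3D_\infty$ from~\eqref{eq:F}, the function $f_0/F_{P_{(a,b)}}$ has divisor $(E_0-\Phi(P))-(q-2)P$ with $E_0-\Phi(P)$ effective and not supported at $P$, hence pole divisor exactly $(q-2)P$, so $q-2\in H(P)$ and, by the multiplicity bound, the multiplicity is $q-2$. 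The hardest step is $(3)\Rightarrow(4)$: carrying out the explicit evaluation of $f_0$ at $\Phi(P)$ and recognising that the Weierstrass-semigroup condition $\PP_{m-1}(\alpha)=0$ and the tangency condition share the factor $2s^q+s^{q-1}+s+2$.
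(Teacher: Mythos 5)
Your proposal is correct and follows essentially the same route as the paper: the multiplicity bound and $(1)\Leftrightarrow(2)$ from the semigroup descriptions in Theorems \ref{maingapsgeneric} and \ref{thm:non:rational:weierstrass}, $(2)\Rightarrow(3)$ via the order of $\bigl((\al+\zeta_3)/(\al+\zeta_3^2)\bigr)^3$, $(3)\Leftrightarrow(4)$ by reducing both conditions to the single equation $2(a^m)^q+(a^m)^{q-1}+a^m+2=0$ (the paper phrases this as $\al^{q-1}+(\al-1)^{q-1}+1=0$, which is the same condition after substituting $\al=a^m/(1+a^m)$), and $(4)\Rightarrow(1)$ via the pole divisor of $t_{P}/F_{P}$ (your $f_0/F_{P_{(a,b)}}$ is a nonzero constant multiple of it). The only differences are cosmetic: you work with $s=a^m$ rather than $\al$ in the key computation, and you verify the factorization $(s^{q-1}-1)^2\bigl((2s+1)s^{q-1}+s+2\bigr)$ directly, both of which check out.
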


\begin{proof}
Comparing the gap set in the generic case and the case described in Theorem \ref{thm:non:rational:weierstrass}, we see that the only difference is that the value of certain gaps is increased by one. Since in the generic case, $1,\dots,q-2$ are gaps and $q-1$ is not a gap, this means that the multiplicity of $H(P_{(a,b)})$ for any $P_{(a,b)} \in \X_3(\overline{\F}_{q^2}) \setminus \X_3(\F_{q^2})$ can be either $q-2$ or $q-1$. Now we show equivalence of the four listed items. For convenience, we write $P=P_{(a,b)}$ and $\alpha=\al(P_{(a,b)})$.

$(1) \Rightarrow (2):$ Assume that $q-2 \in H(P)$ and let $i$ be the $\PP$-order of $\alpha$. Then according to Theorem \ref{thm:non:rational:weierstrass} $q-2$ can be written in the form $(m-2-i-\ell(i+1))q + (\ell+1)(3i+3)$ for some $\ell$ between $0$ and $\lfloor (m-2-i)/(i+1) \rfloor.$ Then necessarily $m-2-i-\ell(i+1)=0$, which is only possible if $\ell =(m-2-i)/(i+1)$ is an integer. Hence $i+1$ divides $m-1$.

$(2) \Rightarrow (3):$ From the definition of the polynomial $\PP_{i+1}(s)$, we see that $((\alpha+\zeta_3)/(\alpha+\zeta_3^2))^{i+1}=1$. If $i+1$ divides $m-1$, this implies that $((\alpha+\zeta_3)/(\alpha+\zeta_3^2))^{m-1}=1$, which in turn implies that $\PP_{m-1}(\alpha)=0.$

$(3) \Rightarrow (4):$ The tangent line $\ell_P$ of the plane curve $y^{q+1}+x^{2m}+x^m=0$ at $(a,b)$ is given by the equation $a^{m-1}(2a^m+1)(x-a)+3b^q(y-b)=0$. Hence $\Phi(a,b)$ lies on $\ell_P$ if and only if $a^{m}(2a^m+1)(a^{q^2-1}-1)+3b^{q+1}(b^{q^2-1}-1)=0$. Using that $b^{q+1}=-a^{2m}-a^m$, we can express all quantities in this equation in terms of $a^m$ and obtain the equivalent equation $a^m((a^m)^{q-1}-1)^2(2(a^m)^q+(a^m)^{q-1}+a^m+2)=0.$ Since $P \not \in \X_3(\F_{q^2})$, we know $a^m \not\in \F_q$ and hence we conclude that
\begin{equation*}
(a^{q^2},b^{q^2}) \in \ell_P  \Leftrightarrow 2(a^m)^q+(a^m)^{q-1}+a^m+2=0.
\end{equation*}
Using that $a^m=\al/(1-\al)$, we conclude that
\begin{equation}\label{eq:phiPontangent}
(a^{q^2},b^{q^2}) \in \ell_P  \Leftrightarrow \alpha^{q-1}+(\alpha-1)^{q-1}+1=0.
\end{equation}
Now let us investigate our assumption: $\PP_{m-1}(\alpha)=0$. This implies \[\left( \frac{\al+\zeta_3}{\al+\zeta_3^2} \right)^{q-2}=1 \text{ and hence } \left( \frac{\al+\zeta_3}{\al+\zeta_3^2} \right)^{q}=\left( \frac{\al+\zeta_3}{\al+\zeta_3^2} \right)^{2},\] which in turn gives
$$0=(\al+\zeta_3)^q (\al+\zeta_3^2)^2-(\al+\zeta_3^2)^q(\al+\zeta_3)^2=(\al^q+\zeta_3^2) (\al+\zeta_3^2)^2-(\al^q+\zeta_3)(\al+\zeta_3)^2.$$
Multiplying everything out and dividing by $\zeta_3^2-\zeta_3$, we find that
\[0=2\alpha^{q+1}-\alpha^q+\alpha^2-2\alpha=\alpha(\alpha-1)(\alpha^{q-1}+(\alpha-1)^{q-1}+1).\]
In light of equation \eqref{eq:phiPontangent}, we obtain that $\Phi(a,b) \in \ell_P$.

$(4) \Rightarrow (1):$ If $\Phi(a,b) \in \ell_P$, then the function $t_{P}/F_P$, see equations \eqref{eq:tangent} and \eqref{eq:F}, has a pole of order $q-2$ at $P_{(a,b)}$ and no other poles. Since we already have seen that $H(P)$ has multiplicity $q-1$ or $q-2$, the conclusion is that the multiplicity is $q-2$.
\end{proof}

\begin{rem}
Let us denote by $W_q$ the total number of Weierstrass points. We have seen that
\begin{eqnarray*}
W_q & = & -(q+1)^2+(q+1)+2(q+1)m+(q+1)^2 \left(\sum_{i=0}^{m-1}\varphi(i+1)-\sum_{i=0}^{(m-1)/p-1} \varphi(p\cdot (i+1))\right)\\
 & = & -(q+1)^2+(q+1)+2(q+1)m+(q+1)^2 \left(\sum_{i=1}^{m}\varphi(i)-\sum_{i=1}^{(m-1)/p} \varphi(p\cdot i)\right).
\end{eqnarray*}
Here the notation $\sum_{i=0}^\xi$ for $\xi \in \mathbb{R}_{\ge 0}$ is shorthand for $\sum_{i=0}^{\lfloor \xi \rfloor}$.

Using iteratively that $$\sum_{i=1}^{(m-1)/p} \varphi(p \cdot i)=(p-1)\sum_{i=1}^{(m-1)/p} \varphi(i)+\sum_{i=1}^{(m-1)/p^2} \varphi(p \cdot i),$$ one obtains that
$$\sum_{i=1}^{(m-1)/p} \varphi(p \cdot i)=\sum_{e=1}^{\lfloor \log_p(m-1) \rfloor}(p-1)\sum_{i=1}^{(m-1)/p^e}\varphi(i).$$
It is well known, see for example \cite[Thm.330]{HW}, that $\sum_{i=1}^{N} \varphi(i) = \frac{3}{\pi^2}N^2 + O(N \log(N))$ asymptotically as $N \to \infty$.

Hence, we see that
\begin{eqnarray*}
\sum_{i=1}^{(m-1)/p} \varphi(p \cdot i) & = & \sum_{e=1}^{\lfloor \log_p(m-1) \rfloor}\frac{3(p-1)(m-1)^2}{\pi^2 p^{2e}}+ O\left(\sum_{e=1}^{\lfloor \log_p(m-1) \rfloor} \frac{m-1}{p^e} \log_p \left( \frac{m-1}{p^e}\right) \right)\\
& = &  \frac{3(p-1)(m-1)^2}{\pi^2}\frac{1-p^2/p^{2 \lfloor \log_p(m-1) \rfloor}}{p^2-1} \\ 
& & \qquad \qquad \qquad \qquad \qquad \qquad +O\left(\int_0^{\log_p(m-1)} \frac{m-1}{p^e} \log_p \left( \frac{m-1}{p^e}\right) de\right)\\
& = & \frac{3(m-1)^2}{\pi^2 (p+1)}- \frac{3p^2}{\pi^2(p+1)}\left(\frac{m-1}{p^{\lfloor \log_p(m-1) \rfloor}}\right)^2+O(q\log(q))\\
& = & \frac{3(m-1)^2}{\pi^2 (p+1)}+O(q\log(q)).
\end{eqnarray*}
Going back to the number of Weierstrass points, we see that
\begin{eqnarray*}
W_q &= & (q+1)^2 \left(\sum_{i=1}^{m-1}\varphi(i)-\sum_{i=1}^{(m-1)/p} \varphi(p\cdot i)\right)+O(q^2)\\
 & = & (q+1)^2\left(\frac{3(m-1)^2}{\pi^2}-\frac{3(m-1)^2}{\pi^2 (p+1)}\right)+O(q^3 \log(q))\\
 & = & \frac{3(m-1)^2(q+1)^2}{\pi^2}\frac{p}{p+1}+O(q^{3}\log(q))\\
 & = & \frac{q^4}{3\pi^2}\frac{p}{p+1}+O(q^{3}\log(q)).
 \end{eqnarray*}
Since the number of rational points is $O(q^3)$, this shows that for large $q$, the number of nonrational Weierstrass points, vastly outnumbers the number of rational Weierstrass points.
\end{rem}

\section{The full automorphism group \texorpdfstring{$\mathrm{Aut}(\mathcal{X}_3)$}{Aut(X3)} of \texorpdfstring{$\mathcal{X}_3$}{X3}}
\label{sec:automorphisms}

It turns out that knowing the Weierstrass semigroup of all $\F_{q^2}$-rational points on $\X_3$, allows us to determine the full automorphism group $\mathrm{Aut}(\mathcal{X}_3)$ of $\X_3$. We devote this section to this. As before $q \equiv 2 \pmod{3}$ and we denote by $p$ the characteristic of $\F_{q^2}$. As discussed in Section \ref{sec:preliminaries}, the function field $\F_{q^2}(\X_3)$ can be seen as a subfield of the Hermitian function field $\F_{q^2}(\mathcal{H})$, and the function field extension $\F_{q^2}(\mathcal{H})/\F_{q^2}(\X_3)$ is an unramified Galois extension of degree $3$ (see Remark \ref{rem:unramified}), with Galois group generated by the automorphism $\tau$, defined in equation~\eqref{eq:tau}. This observation is useful when constructing automorphisms of the curve $\mathcal{X}_3$ (equivalently, of the function field $\F_{q^2}(\X_3)$).

Indeed, a way to find automorphisms of $\F_{q^2}(\X_3)$ is to consider the normalizer $N(\langle\tau\rangle)$ of $\langle\tau\rangle$ in $\mathrm{Aut}(\mathcal{H}) \cong \PGU(3,q)$. Doing so, the group $N(\langle\tau\rangle)/\langle \tau \rangle$ is theoretically guaranteed to be a subgroups of the full automorphism group of the fixed field $\F_{q^2}(\X_3)$ of $\langle\tau\rangle$.
The group $N(\langle\tau\rangle)$ in $\PGU(3,q)$ is a well-known maximal subgroup stabilizing a self-polar triangle, see \cite{HKT}*{Theorem A.10}. It has order $6(q+1)^2/\gcd(3,q+1)=2(q+1)^2$ and is isomorphic to the semidirect product of an abelian group of order $(q+1)^2/3$ containing $\tau$ and a symmetric group of order $6$. This explains the structure of the automorphism group described in Lemma \ref{lem:someauto}.

We now begin our study of the full automorphism group of $\X_3$. Recall that ${\mathcal O}_0:=\{P_{0}^1,\ldots ,P_{0}^m\}$, ${\mathcal O}_\infty:=\{P_{\infty}^1,\ldots ,P_{\infty}^m\}$ and ${\mathcal O}_m=\{P_{(a,0)} \mid a^m+1=0\}$. Moreover $\mathcal{O}={\mathcal O}_0 \cup {\mathcal O}_\infty \cup {\mathcal O}_m$ as in equation \eqref{eq:O}.

\begin{lem} \label{lem:action}
Let $\mathcal{O}$ be the set defined in equation \eqref{eq:O}. Then $\mathcal{O}$ is an orbit of $\mathrm{Aut}(\mathcal{X}_3)$.
\end{lem}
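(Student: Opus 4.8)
The plan is to use the well-known fact, already invoked in the proof of Theorem \ref{thm:O}, that every automorphism of $\X_3$ preserves Weierstrass semigroups: for $\sigma\in\mathrm{Aut}(\X_3)$ and $P\in\X_3$ one has $H(\sigma(P))=H(P)$. By Corollary \ref{cor:orbitO}, $\mathcal{O}$ is a single orbit of the subgroup $G\le\mathrm{Aut}(\X_3)$ of Lemma \ref{lem:someauto}, so $\mathcal{O}$ is contained in one $\mathrm{Aut}(\X_3)$-orbit; to conclude that this orbit equals $\mathcal{O}$ it suffices to show that $\mathcal{O}$ is invariant under $\mathrm{Aut}(\X_3)$. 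In view of the semigroup invariance, this reduces to the purely arithmetic statement that $\langle q-2,q,q+1\rangle$ occurs as the Weierstrass semigroup of a point $P$ of $\X_3$ \emph{only} when $P\in\mathcal{O}$. As throughout Sections \ref{sec:rational} and \ref{sec:nonrational} we take $q\ge 5$; for $q=2$ the curve $\X_3$ is elliptic and $\mathrm{Aut}(\X_3)$ acts transitively on all of $\X_3$, so the statement is genuinely specific to $q\ge 5$.

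To prove the characterization I would run through the classification of semigroups obtained in Sections \ref{sec:rational} and \ref{sec:nonrational}. By Theorem \ref{thm:O}, $H(P)=\langle q-2,q,q+1\rangle$ for every $P\in\mathcal{O}$, and this semigroup has multiplicity $q-2$ and contains $q+1$. For $P\in\X_3(\F_{q^2})\setminus\mathcal{O}$, Theorems \ref{thm:special:short:orbit} and \ref{thm:rationalcase:other:semigroups} show that every generator of $H(P)$ is at least $q-1$, so its multiplicity is $q-1\ne q-2$; since $q-2$ is the multiplicity of $\langle q-2,q,q+1\rangle$, we get $H(P)\ne\langle q-2,q,q+1\rangle$. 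For $P\in\X_3(\overline{\F}_{q^2})\setminus\X_3(\F_{q^2})$ I would instead show that $q+1$ is always a gap, which again forces $H(P)\ne\langle q-2,q,q+1\rangle$. First, $q+1\in G_{gen}$: it equals $1\cdot q+1$, which lies in the generic gap set whenever $m\ge 3$, and it equals $(m-1)q+1$ when $m=2$. If moreover $P$ is a non-rational Weierstrass point of $\PP$-order $i\le m-2$, Theorem \ref{thm:non:rational:weierstrass} alters $G_{gen}$ only by replacing each value $(m-2-i-\ell(i+1))q+(\ell+1)(3i+3)$ by the same value plus $1$; since $q\equiv 2\pmod 3$ and $(\ell+1)(3i+3)$ is a positive multiple of $3$, one checks directly that $q+1$ is neither removed from nor added to $G_{gen}$, hence remains a gap. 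Every other non-rational point is generic in the sense of Theorem \ref{maingapsgeneric}, where $q+1$ is also a gap. This proves the characterization, and therefore the lemma.

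I do not expect a real obstacle here; the only step needing a little care is the verification that $q+1$ survives as a gap at the non-rational Weierstrass points, i.e.\ that $q+1$ is not among the finitely many gap values shifted by $1$ in Theorem \ref{thm:non:rational:weierstrass}, which is precisely where the congruence $q\equiv 2\pmod 3$ is used. Equivalently, the whole argument can be organized around two separating invariants of $H(P)$ — having multiplicity $q-2$ (which rules out the rational points outside $\mathcal{O}$) together with $q+1$ being a non-gap (which rules out all non-rational points, including those of multiplicity $q-2$ identified in Section \ref{sec:nonrational}) — but the content is the same.
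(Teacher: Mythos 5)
Your proposal is correct, but it takes a different route from the paper's proof, and it is worth recording the difference. The paper first invokes the fact that, since $\X_3$ is $\F_{q^2}$-maximal, $\mathrm{Aut}(\X_3)$ is defined over $\F_{q^2}$ and hence permutes the set $\X_3(\F_{q^2})$ (citing \cite{BMT}*{Lemma 2.4}); after that reduction it only has to separate $\mathcal{O}$ from the \emph{rational} points outside $\mathcal{O}$, which Theorems \ref{thm:O}, \ref{thm:special:short:orbit} and \ref{thm:rationalcase:other:semigroups} do immediately, and then Corollary \ref{cor:orbitO} upgrades invariance to transitivity exactly as you do. You dispense with the rationality-preservation input altogether and instead prove the stronger statement that $\langle q-2,q,q+1\rangle$ occurs as $H(P)$ \emph{only} for $P\in\mathcal{O}$ among all points of $\X_3$, which forces you to also rule out the non-rational points via Section \ref{sec:nonrational}. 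Your two separating invariants check out: for rational $P\notin\mathcal{O}$ every listed generator is at least $q-1$ (the generator $(q-1)+i(q-2)-1$ is $\ge 2q-4>q-1$ for $i\ge 1$), so the multiplicity is $q-1\ne q-2$; and for non-rational $P$ the element $q+1$ is a gap, since $q+1\in G_{gen}$ (as $1\cdot q+1$ for $m\ge 3$, as $(m-1)q+1$ for $m=2$) and it is never among the removed values $Aq+B$ of Theorem \ref{thm:non:rational:weierstrass}: there $B=(\ell+1)(3i+3)\ge 6$ excludes $A=1$, $A\ge 2$ is too large, and $A=0$ forces $B=3(m-1)=q-2\ne q+1$. (Note also that non-rational points of $\PP$-order exactly $m-1$ fall under Theorem \ref{maingapsgeneric}, and by Lemma \ref{lem:Porder_rational} the points with $\al^2-\al+1=0$ are all rational, so your case split is exhaustive.) The trade-off is clear: the paper's argument is shorter but leans on an external fact about maximal curves, while yours is self-contained modulo the semigroup computations but needs the full non-rational classification of Section \ref{sec:nonrational}, which the paper's proof never touches. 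Both are valid for $q\ge 5$, the standing assumption of these sections.
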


\begin{proof}
Since $\mathcal{X}_3$ is $\mathbb{F}_{q^2}$-maximal, its full automorphism group $\mathrm{Aut}(\mathcal{X}_3)$ is defined over $\mathbb{F}_{q^2}$ and hence acts on the set $\mathcal{X}_3(\mathbb{F}_{q^2})$, see for example \cite[Lemma 2.4]{BMT}.
Let $H(P_{(a,b)})$ and $H(P)$ be the Weierstrass semigroups at a point $P_{(a,b)} \in \mathcal{X}_3(\mathbb{F}_{q^2}) \setminus \mathcal{O}$ and at $P \in \mathcal{O}$, respectively. Since the semigroups $H(P_{(a,b)})$ and $H(P)$ are not the same (see Theorems \ref{thm:O}, \ref{thm:special:short:orbit} and \ref{thm:rationalcase:other:semigroups}), $\mathrm{Aut}(\mathcal{X}_3)$ acts separately on $\mathcal{O}$ and $\mathcal{X}_3(\mathbb{F}_{q^2}) \setminus \mathcal{O}$. Moreover, since from Corollary \ref{cor:orbitO} $\mathcal{O}$ is an orbit of $G \subseteq \mathrm{Aut}(\mathcal{X}_3)$, we deduce that $\mathcal{O}$ is also an orbit of the entire $\mathrm{Aut}(\mathcal{X}_3)$.
\end{proof}

We now use that $\mathcal O$ is an orbit, to start investigating the $p$-Sylow subgroup of $\mathrm{Aut}(\mathcal{X}_3)$.

\begin{lem} \label{lem:Splessthanq}
Let $q \geq 11$. Let $S_p$ denote a Sylow subgroup of $|\mathrm{Aut}(\mathcal{X}_3)|$. Then $|S_p|<q$.
\end{lem}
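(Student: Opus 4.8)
The plan is to first force $S_p$ to fix a point of the orbit $\mathcal O$, and then exploit the unusually small Weierstrass semigroup there. First I would note that, by Lemma~\ref{lem:action}, $\mathcal O$ is an orbit of $\mathrm{Aut}(\X_3)$, while $|\mathcal O|=q+1\equiv 1\pmod p$ by Corollary~\ref{cor:orbitO}. The $p$-group $S_p$ acts on $\mathcal O$ with orbits of $p$-power length, so not all of them can be nontrivial and $S_p$ fixes some $P\in\mathcal O$. By Theorem~\ref{thm:O}, $H(P)=\langle q-2,q,q+1\rangle$; in particular the smallest positive non-gap is $q-2$, so $L((q-2)P)=\langle 1,f\rangle$, $L(qP)=\langle 1,f,g\rangle$ and $L((q+1)P)=\langle 1,f,g,h\rangle$ with $(f)_\infty=(q-2)P$, $(g)_\infty=qP$, $(h)_\infty=(q+1)P$; for $P=P_{(a,0)}$ with $a^m=-1$ one may take $h=1/(x-a)$, $g=y/(x-a)$, $f=y^3/(x(x-a))$, and then $\F_{q^2}(g,h)=\F_{q^2}(\X_3)$ and $g^3=f\,h\,(ah+1)$.

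Next I would linearise the action. Every $\sigma\in S_p$ fixes $P$ and has $p$-power order, hence acts trivially on the cotangent space at $P$; since $\sigma(f)\in L((q-2)P)$, this gives $\sigma(f)=f+c_\sigma$, so that $\phi\colon S_p\to(\overline{\F}_{q^2},+)$, $\sigma\mapsto c_\sigma$, is a homomorphism. Its kernel $K$ fixes $f$, hence the rational subfield $\F_{q^2}(f)$, so $|K|$ divides $[\F_{q^2}(\X_3):\F_{q^2}(f)]=q-2$; as $K$ is a $p$-group and $q-2=p^h-2$ is coprime to $p$ for odd $p$, we get $K=1$, that is, $S_p$ embeds into $(\overline{\F}_{q^2},+)$. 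Comparing $\sigma(f)-f=c_\sigma\ne 0$, which has valuation $0$ at $P$, with the local expansion $f=\alpha t^{-(q-2)}+\cdots$ and using $p\nmid q-2$, one sees that the leading term of $\sigma(f)-f$ does not cancel, forcing the unique ramification break of $S_p$ at $P$ to be $q-2$; hence the different exponent of $\X_3\to\X_3/S_p$ at $P$ is exactly $(q-1)(|S_p|-1)$.

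Then I would feed this into the Hurwitz genus formula. Since $2g(\X_3)-2=(q-2)(q+1)/3$, if $\X_3/S_p$ has genus $\ge 1$ one gets $(q-2)(q+1)/3\ge(q-1)(|S_p|-1)$, and if it has genus $0$ one gets $(q-2)(q+1)/3+2|S_p|\ge(q-1)(|S_p|-1)$; in both cases $|S_p|\le(q^2+2q-5)/(3(q-3))<q$ for $q\ge 11$.

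The one case that does not fit this pattern is $p=2$ (so $q\ge 32$): then $2\mid q-2$, the jump computation degenerates, and $K$ need only have order $\le 2$. The hard part will be this case, which I would instead handle by using that $S_2$ acts faithfully on $L((q+1)P)=\langle 1,f,g,h\rangle$ (because $\F_{q^2}(g,h)=\F_{q^2}(\X_3)$), writing each $\sigma\in S_2$ as a unipotent matrix in the basis $(1,f,g,h)$, substituting $\sigma(f),\sigma(g),\sigma(h)$ into the relation $g^3=f\,h\,(ah+1)$, and matching pole orders at $P$: proceeding from the top order downwards, the dominant monomial at each stage forces the corresponding matrix entry to vanish, until only a single entry $\zeta$ with $\zeta^2=\zeta$ survives, whence $|S_2|\le 2<q$. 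The only genuinely delicate step is this last term-by-term bookkeeping in characteristic $2$; the rest is the Hurwitz estimate together with the shape of $H(P)$.
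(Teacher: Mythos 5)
Your proof is correct, but it takes a genuinely different route from the paper's. Both arguments open identically: since $|\mathcal{O}|=q+1\equiv 1\pmod{p}$ and $\mathcal{O}$ is an $\mathrm{Aut}(\X_3)$-orbit (Lemma~\ref{lem:action}), the $p$-group $S_p$ fixes some $P\in\mathcal{O}$. From there the paper goes group-theoretic: the $p$-rank zero property forces $S_p$ to act semiregularly on $\mathcal{O}\setminus\{P\}$, so $|S_p|$ divides $q$, and the extremal case $|S_p|=q$ is excluded via $2$-transitivity on $\mathcal{O}$, the Kantor--O'Nan--Seitz classification, the existence of a regular normal subgroup forcing $q+1$ to be a prime power, and finally Fermat primes plus Mihailescu's theorem. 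You instead work locally at $P$, exploiting $H(P)=\langle q-2,q,q+1\rangle$ from Theorem~\ref{thm:O} to linearize the $S_p$-action on the flag $L((q-2)P)\subset L(qP)\subset L((q+1)P)$: for odd $p$ the additive character $\sigma\mapsto c_\sigma$ is injective (since $p\nmid q-2$), the single ramification break at $q-2$ follows, and Riemann--Hurwitz gives $|S_p|\le (q^2+2q-5)/(3(q-3))<q$; for $p=2$ the substitution of the unipotent matrices into $g^3=fh(ah+1)$ does close as claimed --- comparing pole orders $3q-2,\,3q-4,\,3q-6,\,2q+2,\,2q$ in turn kills $c_2,c_4,c_5,c_1,c_3$ and leaves only $ac_6^2=c_6$, whence $|S_2|\le 2$. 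Two small points to tidy up: you should note that $S_p$ can be conjugated inside $\mathrm{Aut}(\X_3)$ (using Corollary~\ref{cor:orbitO} and Sylow conjugacy) so that its fixed point lies in $\mathcal{O}_m$, since your explicit generators $f,g,h$ and the cubic relation are written for $P=P_{(a,0)}$ with $a^m=-1$; and in the odd case the two Hurwitz bounds differ slightly, though both are indeed dominated by $(q^2+2q-5)/(3(q-3))$. What your approach buys is considerable: it avoids the classification machinery and Catalan's conjecture entirely, and it proves strictly more than the lemma asks --- the conclusion $|S_2|\le 2$ would render the paper's Lemmas~\ref{lem:ordersylow} and~\ref{lem:S2notq2} unnecessary, and for $q=p$ prime the odd bound already forces $S_p$ to be trivial, recovering Lemma~\ref{lem:tame} in that case. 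The paper's argument, by contrast, is shorter to write given the cited results and treats both parities uniformly.
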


\begin{proof}
Since $S_p$ acts on $\mathcal{O}$ by Lemma \ref{lem:action}, we see that $S_p$ has at least one fixed point $P \in \mathcal{O}$. Without loss of generality, we can assume $P=P_{(a,0)}$, for some $a$ such that $a^m+1=0$. Since $\mathcal{X}_3$ has $p$-rank zero, $S_p$ acts with long orbits on $\mathcal{O} \setminus \{P_{(a,0)}\}$, see \cite[Lemma 11.129]{HKT}. This implies that $|S_p| \leq q$.

Now suppose that $|S_p|=q$. Then $\mathrm{Aut}(\mathcal{X}_3)$ acts $2$-transitively on $\mathcal{O}$ and the stabilizer of two points is cyclic in this action, since it is of order relatively prime to $p$ (see \cite[Theorem 11.49]{HKT}). Moreover, from \cite[Theorem 1.1]{KOS}, $\mathrm{Aut}(\mathcal{X}_3)$ has a regular normal subgroup $N$, unless:
\begin{itemize}
    \item $\mathrm{Aut}(\mathcal{X}_3)$ is isomorphic to either $\mathrm{PSL}(2,q)$, $\mathrm{PGL}(2,q)$, or
    \item $q=\bar q^3$ and $\mathrm{Aut}(\mathcal{X}_3)$ is isomorphic to $\mathrm{PSU}(3,\bar q)$ or $\PGU(3,\bar q)$, or
    \item $\mathrm{Aut}(\mathcal{X}_3)$ is isomorphic to the Suzuki group $Sz(\bar{q})$ where $q=\bar{q}^2$.
\end{itemize}
The first two possibilities can be excluded, since in that case $|\mathrm{Aut}(\mathcal{X}_3)|$ would not be divisible by $2(q+1)^2$. Further, if $\mathrm{Aut}(\mathcal{X}_3)$ would be isomorphic to the Suzuki group $Sz(\bar{q})$, then the characteristic is two and $q=\bar{q}^2$ is an even power of two. However, this is impossible, since $q \equiv 2 \pmod{3}$. This means that $\mathrm{Aut}(\mathcal{X}_3)$ has a regular normal subgroup $N$. Then, from \cite{BW}*{Theorem 1.7.6}, we see that $|\mathcal{O}|=q+1=\ell^h$ for some $h\in \mathbb{Z}_{>0}$ and some prime number $\ell$. If $q$ is odd, this cannot happen as $q+1$ is divisible by $6$. If $q$ is even, we would get $|\mathcal{O}|=q+1=2^n+1=\ell^h$. If $h=1$, this would mean that $\ell$ is a Fermat prime, which is only possible if $n$ is a power of two. However, since $n$ is odd, this would imply $n=1$. This is impossible, since $q \ge 11$. If $h >1$, then from Catalan's Conjecture (Mihailescu's theorem \cite{Mih}), we see that the only possibility is that $\ell=3$ and $n=3$. This is again not possible, since we assumed that $q \ge 11$. Hence, we conclude that the only possibility is $|S_2|<q$.
\end{proof}

Next is a lemma that will allow us to identify certain automorphisms of $\X_3$.

\begin{lem} \label{lem:lift}
Let $\alpha \in \mathrm{Aut}(\mathcal{X}_3)$ and suppose that $\alpha(x)$ is a cube, when seen as a function of the Hermitian function field $\overline{\mathbb{F}}_{q^2}(\mathcal{H})$. Then $\alpha$ can be lifted to an automorphism $\bar\alpha$ of $\mathcal{H}$.
\end{lem}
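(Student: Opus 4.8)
I would work over $\overline{\F}_{q^2}$ and translate everything into function fields. Recall from Section~\ref{sec:preliminaries} that $\varphi^{*}$ realizes $\overline{\F}_{q^2}(\X_3)$ as the subfield $\overline{\F}_{q^2}(u^{3},uv)$ of $\overline{\F}_{q^2}(\mathcal{H})=\overline{\F}_{q^2}(u,v)$, and that $[\overline{\F}_{q^2}(\mathcal{H}):\overline{\F}_{q^2}(\X_3)]=3$ (the geometric irreducibility of both curves guarantees the degree does not drop when the constant field is enlarged). Since $u^{3}=x$ and this extension has degree $3$, the polynomial $X^{3}-x$ is the minimal polynomial of $u$ over $\overline{\F}_{q^2}(\X_3)$, so it is irreducible and $\overline{\F}_{q^2}(\mathcal{H})\cong\overline{\F}_{q^2}(\X_3)[X]/(X^{3}-x)$. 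The automorphism $\alpha$ induces a field automorphism $\alpha^{\#}$ of $\overline{\F}_{q^2}(\X_3)$ fixing $\overline{\F}_{q^2}$ (here $\alpha^{\#}(x)$ is what the statement calls $\alpha(x)$, regarded inside $\overline{\F}_{q^2}(\mathcal{H})$), and producing an automorphism $\bar\alpha$ of $\mathcal{H}$ lifting $\alpha$ amounts precisely to extending $\alpha^{\#}$ to a field automorphism of $\overline{\F}_{q^2}(\mathcal{H})$; any such extension automatically satisfies $\bar\alpha^{\#}\circ\varphi^{*}=\varphi^{*}\circ\alpha^{\#}$, i.e.\ $\varphi\circ\bar\alpha=\alpha\circ\varphi$.

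The key step is then the isomorphism-extension theorem for simple algebraic extensions. By hypothesis $\alpha^{\#}(x)=w^{3}$ for some $w\in\overline{\F}_{q^2}(\mathcal{H})$, so the image $X^{3}-\alpha^{\#}(x)$ of the minimal polynomial of $u$ under $\alpha^{\#}$ has the root $w$ in $\overline{\F}_{q^2}(\mathcal{H})$. Hence $\alpha^{\#}$ extends to the field homomorphism $\beta\colon\overline{\F}_{q^2}(\mathcal{H})\to\overline{\F}_{q^2}(\mathcal{H})$ defined by $\sum_{i}a_{i}u^{i}\mapsto\sum_{i}\alpha^{\#}(a_{i})w^{i}$, which is well defined precisely because $w^{3}=\alpha^{\#}(x)$ and which fixes $\overline{\F}_{q^2}$ since $\alpha^{\#}$ does.

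It remains to verify that $\beta$ is surjective, and this is the only point that is not purely formal, since an injective $\overline{\F}_{q^2}$-endomorphism of a one-variable function field need not be onto. Here one argues by degrees: because $\alpha^{\#}$ is an automorphism of $\overline{\F}_{q^2}(\X_3)$, one has $\overline{\F}_{q^2}(\X_3)=\beta(\overline{\F}_{q^2}(\X_3))\subseteq\beta(\overline{\F}_{q^2}(\mathcal{H}))\subseteq\overline{\F}_{q^2}(\mathcal{H})$; as $\beta$ is an isomorphism onto its image it preserves the degree of this subextension, so $[\beta(\overline{\F}_{q^2}(\mathcal{H})):\overline{\F}_{q^2}(\X_3)]=[\overline{\F}_{q^2}(\mathcal{H}):\overline{\F}_{q^2}(\X_3)]=3$, and multiplicativity of degrees in the tower forces $\beta(\overline{\F}_{q^2}(\mathcal{H}))=\overline{\F}_{q^2}(\mathcal{H})$. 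Thus $\beta$ is an automorphism of $\overline{\F}_{q^2}(\mathcal{H})$ fixing $\overline{\F}_{q^2}$, hence corresponds to an automorphism $\bar\alpha$ of $\mathcal{H}$, and by construction $\bar\alpha^{\#}$ restricts to $\alpha^{\#}$, i.e.\ $\bar\alpha$ lifts $\alpha$.

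I expect no genuine obstacle here beyond bookkeeping: the construction of $\beta$ is routine, and the surjectivity argument above is the single spot needing a small extra observation. A minor point to settle is the convention for ``$\alpha(x)$'' (pushforward versus pullback of the function $x$); whichever is intended, one runs the argument with the corresponding field automorphism, and since $\alpha$ lifts to $\mathcal{H}$ if and only if $\alpha^{-1}$ does, the conclusion is unaffected. It is also worth remarking that the hypothesis is necessary: if $\bar\alpha$ lifts $\alpha$, then $\bar\alpha(u)$ is a cube root of $\bar\alpha^{\#}(x)=\alpha^{\#}(x)$, so $\alpha^{\#}(x)$ is forced to be a cube in $\overline{\F}_{q^2}(\mathcal{H})$.
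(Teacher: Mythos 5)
Your proof is correct and is essentially the paper's argument in more abstract packaging: the paper likewise sends $u$ to a cube root $w$ of $\alpha(x)$ (and $v$ to $\alpha(y)/w$), verifying well-definedness by checking directly that $\bar\alpha(u)^{q+1}+\bar\alpha(v)^{q+1}+1=0$, whereas you invoke the isomorphism-extension theorem for the simple extension $\overline{\F}_{q^2}(\X_3)(u)=\overline{\F}_{q^2}(\mathcal{H})$ with minimal polynomial $X^3-x$. Your explicit surjectivity check via degrees in the tower is a point the paper leaves implicit and is a worthwhile addition.
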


\begin{proof}
Since $\alpha$ is an automorphism of $\mathcal{X}_3$, we know that $\alpha(y)^{q+1}+\alpha(x)^{(q+1)/3}+\alpha(x)^{2(q+1)/3}=0$.
Let $\alpha(x)=w^3$, where $w=w(u,v) \in \overline{\mathbb{F}}_{q^2}(\mathcal{H})$, and define
$$\bar\alpha(u)=w \quad \textrm{and} \quad \bar\alpha(v)=\frac{\alpha(y)}{\bar\alpha(u)}.$$
Then
$$\bar\alpha(u)^{q+1}+\bar\alpha(v)^{q+1}+1=w^{q+1}+\frac{\alpha(y)^{q+1}}{w^{q+1}}+1=\frac{\alpha(x)^{2(q+1)/3}+\alpha(y)^{q+1}+\alpha(x)^{(q+1)/3}}{w^{q+1}}=0.$$
This means that $\bar\alpha$ preserves the defining equation of the Hermitian function field, and defines an automorphism of $\mathcal{H}$. 
\end{proof}

Note that since all automorphisms of $\mathcal H$ are defined over $\F_{q^2}$, the automorphism $\bar{\alpha}$ will also be defined over $\F_{q^2}$. Therefore, if $\alpha(x)$ is a cube in $\overline{\mathbb{F}}_{q^2}(\mathcal{H})$, it was necessarily already a cube in $\mathbb{F}_{q^2}(\mathcal{H}).$

\subsection{The full automorphism group $\mathrm{Aut}(\mathcal{X}_3)$, $q$ odd}

We wish to use the information that $\mathcal{O}$ is an orbit of $\mathrm{Aut}(\mathcal{X}_3)$ to show that, for odd $q$. If $q=5$, the plane curve defined by the (affine) equation $X^5+X=Y^3$, is birationally equivalent to $\X_3$. The corresponding isomorphism of function fields is describes as $x=wX+(wX)^{-1},y=Y/X$, where $w^2=2$. This curve is known to have an automorphism group that is isomorphic to a semidirect product of a cyclic group of order $3$ with $\mathrm{PGL}(2,5)$, see \cite{HKT}*{Theorem 12.11}. In particular $|\mathrm{Aut}(\X_3)|=360$ if $q=5$, which is five times the cardinality of the group of automorphisms $G$ described in Lemma \ref{lem:someauto}.

From now, we assume in this subsection that $q \ge 11$ and $q$ is odd. It turns out that in this case, the automorphism group of $\mathcal{X}_3$ actually coincides with $G$. To see why, let us first prove under the aforementioned hypothesis on $q$ that $\mathrm{Aut}(\mathcal{X}_3)$ is tame, that is, it does not contain any element of order $p$.

\begin{lem} \label{lem:tame}
Let $q \geq 11$ and $q$ odd. Then $|\mathrm{Aut}(\mathcal{X}_3)|$ is not divisible by the characteristic $p$ of the field $\mathbb{F}_{q^2}$.
\end{lem}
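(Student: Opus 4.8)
The plan is to argue by contradiction: suppose $p \mid |\mathrm{Aut}(\mathcal{X}_3)|$, so that a nontrivial Sylow $p$-subgroup $S_p$ exists. By Lemma \ref{lem:Splessthanq} we know $1 < |S_p| < q$, and since $S_p$ acts on the orbit $\mathcal{O}$ (Lemma \ref{lem:action}) and $\mathcal{X}_3$ has $p$-rank zero, $S_p$ fixes exactly one point of $\mathcal{O}$, say $P$, and acts with orbits of length $|S_p|$ on the remaining $q$ points of $\mathcal{O}$. First I would fix a nontrivial element $\sigma \in S_p$ of order $p$ and analyze its action on the Weierstrass semigroup $H(P) = \langle q-2, q, q+1 \rangle$, which we have computed in Theorem \ref{thm:O}. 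The key point is that $\sigma$ fixes $P$, hence fixes the ray of the semigroup and acts on the (one-dimensional, for suitable pole orders) Riemann--Roch spaces $L(nP)$; in particular $\sigma$ acts on the function with pole divisor $(q+1)P$, which up to scalar is $1/(x-a)$ in the coordinates of the proof of Theorem \ref{thm:O}. This forces $\sigma(x)$ to be of the form $(\lambda x + \mu)/(\nu x + \rho)$ for constants with $\nu a + \rho \neq 0$, i.e.\ $\sigma$ acts on the subfield $\F_{q^2}(x)$ as a fractional linear transformation fixing $x = a$ (the image of $P$ under $x$).

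Next I would pin down this fractional linear action more precisely. Since $\sigma$ has order $p$, its action on $\F_{q^2}(x)$ is either trivial or an order-$p$ element of $\mathrm{PGL}(2, \overline{\F}_{q^2})$. An order-$p$ element of $\mathrm{PGL}(2)$ is an additive translation $x \mapsto x + c$ (up to conjugation), hence has a \emph{unique} fixed point on $\mathbb{P}^1$. But the divisor of $x$ from equation \eqref{eq:divs} shows that the fibres of $x$ over $x = 0$ and $x = \infty$ each consist of $m \geq 2$ points, all lying in $\mathcal{O}$; the subgroup $S_p$ permutes these, and since $|S_p| < q$ is a power of $p$ while $m = (q+1)/3$, a counting/fixed-point argument (again using $p$-rank zero, so no long orbit can collapse) will show $\sigma$ must actually fix the subfield $\F_{q^2}(x)$ pointwise — that is, $\sigma(x) = x$. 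Then $\sigma$ lies in the Galois group of the Kummer extension $\F_{q^2}(\mathcal{X}_3)/\F_{q^2}(x)$, which is cyclic of order $q+1$, coprime to $p$ since $q \equiv 2 \pmod 3$ and $p \mid q$. Hence $\sigma = \mathrm{id}$, contradicting $|\sigma| = p$.

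I expect the main obstacle to be the middle step: rigorously excluding the possibility that $\sigma$ acts nontrivially on $\F_{q^2}(x)$ as a translation $x \mapsto x + c$. The subtlety is that $P$ maps to a \emph{finite} value $x = a$ with $a^m = -1$, not to $0$ or $\infty$, so the unique fixed point of the translation on $\mathbb{P}^1$ would have to be $x = a$ — but a translation fixes only $x = \infty$. Thus if $\sigma$ acted as a nontrivial translation, $P$ would map to $\infty$, forcing $P \in \mathcal{O}_0 \cup \mathcal{O}_\infty$, and then $\sigma$ would have to permute the $m \geq 2$ points of that fibre; I would derive a contradiction by noting that $S_p$ fixes only \emph{one} point of $\mathcal{O}$, whereas all $m$ points of the fibre over $x=\infty$ are fixed by $\sigma$ acting as a translation (they all sit over the unique fixed point of the base action), so $m \le 1$, contradicting $q \geq 11$. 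An alternative, perhaps cleaner route for this step: observe that $\sigma$ of order $p$ fixing $P \in \mathcal{O}$ must, by the structure of the wild inertia at $P$ on a curve of $p$-rank zero combined with the already established $|S_p| < q$, have so few fixed points on $\mathcal{X}_3$ that comparing with the Hurwitz/Deuring--Shafarevich constraints becomes impossible; but I favour the explicit fractional-linear argument since the function $1/(x-a)$ generating the $(q+1)$-pole space is so concrete. Once $\sigma(x) = x$ is established, the contradiction with the coprimality of $q+1$ and $p$ is immediate.
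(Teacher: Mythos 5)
There is a genuine gap at the central step of your argument. You claim that because $q+1$ is a non-gap at $P$, the automorphism $\sigma$ must send $1/(x-a)$ to (essentially) a scalar multiple of itself, forcing $\sigma(x)$ to be a fractional linear transformation of $x$. But the non-gaps at $P\in\mathcal{O}$ that are at most $q+1$ are $0$, $q-2$, $q$ and $q+1$ (Theorem \ref{thm:O}), so $L((q+1)P)$ is four-dimensional, spanned by $1$, $y^3/(x(x-a))$, $y/(x-a)$ and $1/(x-a)$. The one-dimensionality of $L((q+1)P)/L(qP)$ only gives $\sigma(1/(x-a))\equiv \lambda/(x-a) \pmod{L(qP)}$, and the correction term may involve $y$; nothing forces $\sigma(x)\in\F_{q^2}(x)$. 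Equivalently, your plan requires $\sigma$ to normalize the Galois group $C_{q+1}$ of the Kummer extension $\F_{q^2}(\X_3)/\F_{q^2}(x)$, which is precisely what cannot be assumed here: the structure theorem for stabilizers in positive characteristic (\cite{HKT}*{Theorem 11.49}) makes the Sylow $p$-subgroup $S_p$ \emph{normal} in the stabilizer of $P$, with the tame cyclic part acting on it --- not the other way around. (The paper's proof of Theorem \ref{thm:fullaut:qodd} does derive $\gamma(x)=\lambda x$ or $\lambda/x$ from commutation with $C_{q+1}$, but only \emph{after} tameness is known, so that the stabilizer is cyclic; using that here would be circular.) Your second step also misstates the fixed-point analysis: an order-$p$ element of $\mathrm{PGL}(2,\overline{\F}_{q^2})$ has a unique fixed point which can be any point of $\mathbb{P}^1$, in particular $x=a$, so the claim that $P$ would be forced into $\mathcal{O}_0\cup\mathcal{O}_\infty$ does not hold as written --- though the correct version of this step (the fibres over $0$ and $\infty$ each contain $m\ge 2$ points of $\mathcal{O}$, so $\bar\sigma$ of odd order must fix both $0$ and $\infty$, contradicting uniqueness of its fixed point) would be salvageable if the first step were repaired.

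The paper's proof runs the normalization in the workable direction: $C_{q+1}$ normalizes $S_p$ and therefore permutes the $S_p$-orbits on $\mathcal{O}$. Since $C_{q+1}$ fixes $\mathcal{O}_m$ pointwise and is transitive on each of $\mathcal{O}_0$ and $\mathcal{O}_\infty$, every $S_p$-orbit meeting $\mathcal{O}_m$ either stays inside $\mathcal{O}_m$ or swallows all of $\mathcal{O}_0$ or $\mathcal{O}_\infty$; the latter forces $|S_p|\ge (q+1)/3+1$ and hence $|S_p|=q$, excluded by Lemma \ref{lem:Splessthanq}. Thus $S_p$ acts semiregularly on $\mathcal{O}_0\cup\mathcal{O}_\infty$, a set of size $2(q+1)/3$ prime to $p$ --- a contradiction. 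You would need either to adopt this counting argument or to find an independent justification that $\sigma$ stabilizes the subfield $\F_{q^2}(x)$; as it stands the proposal does not prove the lemma.
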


\begin{proof}
Suppose by contradiction that $\mathrm{Aut}(\mathcal{X}_3)$ admits a Sylow $p$-subgroup $S_p$ of order $p^i$ for some $i \geq 1$. As we have seen in the proof of Lemma \ref{lem:Splessthanq}, we may assume that $S_p$ fixes $P \in \mathcal{O}$, where $P=P_{(a,0)}$, for some $a$ such that $a^m+1=0$ and that $S_p$ acts with long orbits on $\mathcal{O} \setminus \{P_{(a,0)}\}$. Further by Lemma \ref{lem:Splessthanq}, we may assume that $|S_p|<q$.

Recall that the automorphism $\sigma: (x,y) \mapsto (x,\delta y)$, where $\delta$ is a primitive $(q+1)$-th root of unity, fixes the set ${\mathcal O}_m$ point-wise, while it acts transitively on the sets ${\mathcal O}_0$ and ${\mathcal O}_\infty$. From this, it follows that $\sigma$ normalizes $S_p$ (see \cite[Theorem 11.49]{HKT}) and preserves the orbit of $S_p$ containing ${\mathcal O}_m$. We have thus two possibilities for a fixed $\bar{a}$: either the orbit of $S_p$ containing $P_{(\bar{a},0)}$ is contained in ${\mathcal O}_m$, or it contains entirely either ${\mathcal O}_0$ or ${\mathcal O}_\infty$. In the second case, we would get that $|S_p| \geq (q+1)/3+1$ and hence $|S_p|=q$, which is not possible. Therefore, we can deduce that, for all $\bar{a}$ with $\bar{a}^m+1=0$, the $S_p$-orbit of $P_{(\bar{a},0)}$ is contained in ${\mathcal O}_m$. Since $S_p$ acts on $\mathcal{O}={\mathcal O}_m \cup {\mathcal O}_0 \cup {\mathcal O}_\infty$, $S_p$ must then act with long orbits on ${\mathcal O}_0 \cup {\mathcal O}_\infty$, which is a set of cardinality $2(q+1)/3$. We hence obtain the desired contradiction, as $2(q+1)/3$ is not divisible by $p$.
\end{proof}

\begin{thm}\label{thm:fullaut:qodd}
Let $q \geq 11$, $q$ odd. Then $\mathrm{Aut}(\mathcal{X}_3)=G$.
\end{thm}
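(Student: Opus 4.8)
The plan is to pin down the order of $\mathrm{Aut}(\mathcal{X}_3)$ via a point stabiliser in its action on $\mathcal{O}$, and to control that stabiliser by a local computation at a point of $\mathcal{O}_m$.

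Since $\mathrm{Aut}(\mathcal{X}_3)$ is tame by Lemma~\ref{lem:tame} and, by Lemma~\ref{lem:action} together with Corollary~\ref{cor:orbitO}, acts on $\mathcal{O}$ as a single orbit of length $q+1$, for $P\in\mathcal{O}$ the stabiliser $\mathrm{Aut}(\mathcal{X}_3)_P$ is cyclic (by \cite{HKT}*{Theorem 11.49}) and $|\mathrm{Aut}(\mathcal{X}_3)|=(q+1)\,|\mathrm{Aut}(\mathcal{X}_3)_P|$. As $G\subseteq\mathrm{Aut}(\mathcal{X}_3)$ and $\mathcal{O}$ is a $G$-orbit of size $q+1$, we have $G_P\subseteq\mathrm{Aut}(\mathcal{X}_3)_P$ with $|G_P|=2(q+1)$. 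Hence it suffices to prove that every $\sigma\in\mathrm{Aut}(\mathcal{X}_3)_P$ has order dividing $2(q+1)$: since $\mathrm{Aut}(\mathcal{X}_3)_P$ is cyclic this forces $|\mathrm{Aut}(\mathcal{X}_3)_P|\mid 2(q+1)$, hence $|\mathrm{Aut}(\mathcal{X}_3)_P|=2(q+1)$, hence $|\mathrm{Aut}(\mathcal{X}_3)|=2(q+1)^2=|G|$, and therefore $\mathrm{Aut}(\mathcal{X}_3)=G$.

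To bound the order of $\sigma$ I would work at $P=P_{(a,0)}\in\mathcal{O}_m$ (so $a^m=-1$, and $y$ is a local parameter at $P$). By Theorem~\ref{thm:O} and its proof the functions $h:=1/(x-a)$, $g:=y/(x-a)$, $f:=y^3/(x(x-a))$ have pole divisors $(q+1)P$, $qP$, $(q-2)P$; since $H(P)=\langle q-2,q,q+1\rangle$ they span $L((q+1)P)$ together with the constants, and eliminating $x$ and $y$ yields the identity $g^3=f(ah^2+h)$. As $\mathrm{Aut}(\mathcal{X}_3)$ is tame, $\sigma$ acts on the cotangent line at $P$ by a primitive root of unity $\nu$ of order $n:=\mathrm{ord}(\sigma)$, so comparing leading terms $\sigma(f)=\nu^{-(q-2)}f+a_{f0}$, $\sigma(g)=\nu^{-q}g+a_{gf}f+a_{g0}$, $\sigma(h)=\nu^{-(q+1)}h+a_{hg}g+a_{hf}f+a_{h0}$ for suitable constants (using $L((q-2)P)=\langle 1,f\rangle$, $L(qP)=\langle 1,f,g\rangle$, $L((q+1)P)=\langle 1,f,g,h\rangle$). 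Applying $\sigma$ to the identity and comparing, inside $L(3qP)$, the coefficients of the (unique) monomials in $f,g,h$ of pole orders $3q-1,\ 3q-2,\ 3q-3,\ 2q+2,\ 2q$ forces successively $a_{hg}=0$, $a_{gf}=0$, $a_{hf}=0$, $a_{f0}=0$, $a_{g0}=0$; the remaining comparisons at pole orders $2q-1$ and $q-2$ then give $\nu^{-(q+1)}=2aa_{h0}+1$ and $a_{h0}(aa_{h0}+1)=0$, whence $\nu^{-(q+1)}\in\{1,-1\}$. Thus $\nu^{2(q+1)}=1$, i.e.\ $n\mid 2(q+1)$, as needed.

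The delicate point is the bookkeeping in the last paragraph: one must verify that $L((q+1)P)=\langle 1,f,g,h\rangle$ and, crucially, that for each of the pole orders $3q,3q-1,3q-2,3q-3,2q+2,2q,2q-1,q-2$ there is a \emph{unique} monomial $f^\alpha g^\beta h^\gamma$ realising it, so that comparing coefficients is legitimate; this is a finite case analysis on the exponents which works because $q\ge 11$, and the elimination steps use $\mathrm{char}\,\mathbb{F}_{q^2}\neq 2,3$, which holds as $q$ is odd and $q\equiv 2\pmod 3$. An alternative opening move is to observe that $A_P=\{\theta_{1,\delta}\mid\delta^{q+1}=1\}$ is the unique subgroup of order $q+1$ of the cyclic group $\mathrm{Aut}(\mathcal{X}_3)_P$, hence characteristic, and fixes exactly $\mathcal{O}_m$ pointwise, so that $\sigma$ stabilises $\mathcal{O}_m$; one would then want to upgrade this, using the functions $x,\ y^3/x,\ y^3/x^2$ and Lemma~\ref{lem:lift}, to the statement that $\sigma$ lifts to $\mathcal{H}$ and hence lies in $G$ — but that still requires controlling the action of $\sigma$ on $\mathcal{O}_0\cup\mathcal{O}_\infty$, which is essentially the same obstacle.
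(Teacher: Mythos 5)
Your proposal is correct, and it takes a genuinely different route from the paper. The paper argues by contradiction: an extra automorphism $\gamma$ in the (cyclic, tame) stabilizer of $P_{(a,0)}$ must commute with $C_{q+1}$, hence preserve $\mathcal{O}_m$ and either fix or swap $\mathcal{O}_0$ and $\mathcal{O}_\infty$; this forces $\gamma(x)=\lambda x$ or $\lambda/x$, so $\gamma(x)$ is a cube in $\F_{q^2}(\mathcal{H})$, and Lemma \ref{lem:lift} lifts $\gamma$ to $\PGU(3,q)$, where it stabilizes a self-polar triangle and hence lies in $N(\langle\tau\rangle)$, i.e.\ descends into $G$. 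Your argument instead bounds the exponent of the stabilizer purely locally: using Theorem \ref{thm:O} to get the filtration $L((q-2)P)\subset L(qP)\subset L((q+1)P)$ with basis $1,f,g,h$, the faithfulness of the cotangent representation for tame stabilizers, and the syzygy $g^3=f(ah^2+h)$, you force $\nu^{-(q+1)}=2aa_{h0}+1$ with $a_{h0}(aa_{h0}+1)=0$, hence $\nu^{2(q+1)}=1$ and $|\mathrm{Aut}(\X_3)_P|\mid 2(q+1)$; combined with $|G_P|=2(q+1)$ this gives $|\mathrm{Aut}(\X_3)|=|G|$. I checked the coefficient chase: the only pole-order collision among monomials $f^\alpha g^\beta h^\gamma$ of degree $\le 3$ is $g^3$ versus $fh^2$ at $3q$ (handled by the syzygy itself; note $3q$ is therefore not ``uniquely realised'', contrary to your parenthetical, but the comparison there is vacuous), all your successive vanishings $a_{hg}=a_{gf}=a_{hf}=a_{f0}=a_{g0}=0$ and the two final relations come out as claimed, and the needed inequalities $3q-6>2q+2>\dots$ hold for $q\ge 11$, with $p\neq 2,3$ available. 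What each approach buys: the paper's proof is short but leans on the classification of maximal subgroups of $\PGU(3,q)$ and the lifting lemma; yours is self-contained given the Weierstrass semigroup at $P\in\mathcal{O}_m$ and standard facts about tame stabilizers, at the cost of a bookkeeping computation. Your closing remark that the lifting route ``still requires controlling the action on $\mathcal{O}_0\cup\mathcal{O}_\infty$'' slightly undersells the paper's argument: that control comes for free from commutation with $C_{q+1}$, since $\mathcal{O}_0$ and $\mathcal{O}_\infty$ are the only $C_{q+1}$-orbits in $\mathcal{O}$ of length greater than one.
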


\begin{proof}
Suppose by contradiction that $|\mathrm{Aut}(\mathcal{X}_3)|>|G|$. Let $G_{P_{(a,0)}}$ be the stabilizer in G of $P_{(a,0)}$, for an $a$ such that $a^m+1=0$. Since, by the orbit-stabilizer theorem, $|G|=|\mathcal{O}||G_{P_{(a,0)}}|$ and, by Lemma \ref{lem:action}, $\mathcal{O}$ is an orbit of $\mathrm{Aut}(\mathcal{X}_3)$, the stabilizer $\mathrm{Aut}(\mathcal{X}_3)_{P_{(a,0)}}$ of $P_{(a,0)}$ in $\mathrm{Aut}(\X_3)$ contains some extra automorphism $\gamma \not\in G_{P_{(a,0)}}$.
Let $C_{q+1}$ be the cyclic group generated by $\sigma: (x,y) \mapsto (x,\delta y)$, where $\delta$ is a primitive $(q+1)$-th root of unity. Then, since $\mathrm{Aut}(\mathcal{X}_3)_{P_{(a,0)}}$ is cyclic (as follows from the fact that $\mathrm{Aut}(\mathcal{X}_3)$ is of order relatively prime to $p$), $\gamma$ commutes with $C_{q+1}$ and hence it acts on its fixed points (and, in general, orbits). This means that $\gamma$ acts on the sets ${\mathcal O}_m$ and ${\mathcal O}_0 \cup {\mathcal O}_\infty$, because the set ${\mathcal O}_m$ is exactly the set of fixed points of $C_{q+1}$. Since ${\mathcal O}_0$ and ${\mathcal O}_\infty$ are orbits of $C_{q+1}$ of the same length, either $\gamma$ fixes both ${\mathcal O}_0$ and ${\mathcal O}_\infty$, or interchanges them.

If $\gamma$ fixes both ${\mathcal O}_0$ and ${\mathcal O}_\infty$, then it fixes the divisor of $x$ from equation \eqref{eq:divs}. This means that $\gamma(x)=\lambda x$, for some constant $\lambda$. Hence, $\gamma(x)$ is a cube in $\overline{\mathbb{F}}_{q^2}(\mathcal{H})$, as $x=u^3$ and $\lambda$ is a constant.
Suppose instead that $\gamma$ interchanges ${\mathcal O}_0$ and ${\mathcal O}_\infty$. Then, $\gamma$ maps the divisor of $x$ to the divisor of $\frac{1}{x}$, meaning that there exists a constant $\lambda$ such that $\gamma(x)=\frac{\lambda}{x}$. Hence, in all cases $\alpha(x)$ is a cube in $\overline{\mathbb{F}}_{q^2}(\mathcal{H})$.

From Lemma \ref{lem:lift}, $\gamma$ can be lifted to an automorphism $\bar\gamma$ of the Hermitian curve $\mathcal{H}$ acting on the set of $3(q+1)$ points above those in $\mathcal{O}$. Those points are geometrically the intersection of the Hermitian curve $\mathcal{H}$ with $3$ lines intersecting each other in $3$ points outside $\mathcal{H}$, that is a self-polar triangle. Since this shows that $\gamma$ is induced by $N(\langle\sigma\rangle)$, then $\gamma \in G$, which gives a contradiction.
\end{proof}

\subsection{The full automorphism group $\mathrm{Aut}(\mathcal{X}_3)$, $q$ even}

We now turn our attention to the case where $q$ is even, that is to say when $q=2^n$, $n$ odd.
If $q=2$, the curve is isomorphic to the Hermitian curve over $\F_4$ and therefore has $\PGU(3,2)$ as automorphism group, which contains $216$ elements. Here only automorphisms defined over $\F_{q^2}$ were considered. Hence in this case, there are twelve times more automorphisms than described in Lemma \ref{lem:someauto}.
If $q=8$, the automorphism group of $\mathcal{X}_3$ is known, as in this case $\X_3$ is isomorphic to the Giulietti-Korchm\'aros maximal curve (see \cite{GK}). This curve can for example be given as a plane curve with affine equation $Y^9=(X^2+X)(X^2+X+1)^{3}$. An explicit isomorphism on the level of function fields is then given by $X=\zeta_3+(x^5+x^4+x^3)/y^9$ and $Y=(x^5+x^4+x^3)/y^8.$ Hence for $q=8$, the automorphism group of $X_3$ is a semidirect product of a cyclic group of order $3$ and $\PGU(3,2)$, resulting in $648$ automorphisms, four times more than the group from Lemma \ref{lem:someauto} contains.

From the remainder of this subsection, we will assume that $q=2^n$, $n$ odd and at least five. We will now show that in this case the automorphism group of $\mathcal{X}_3$ coincides with the group $G$ from Lemma \ref{lem:someauto}. To this aim, a similar argument as in the previous subsection will be provided. Of course in this case we cannot prove that $\mathrm{Aut}(\mathcal{X}_3)$ is tame, as $G$ itself is non-tame. We will in fact first prove that, if a Sylow $2$-subgroup of $\mathrm{Aut}(\mathcal{X}_3)$ has order larger than $2$, then its cardinality must be $q/2$.

\begin{lem} \label{lem:ordersylow}
Let $n \geq 5$ and $q=2^n$. Let also $S_2$ denote a Sylow $2$-subgroup of  $\mathrm{Aut}(\mathcal{X}_3)$. Then either $|S_2|=2$ or $|S_2|=q/2$. In the latter case, a $2$-Sylow $S_2$ fixing a point $P_{(a,0)}$, with $a^m+1=0$, acts on $\mathcal{O}$ with the following 3 orbits:
\begin{itemize}
    \item $\{P_{(a,0)}\}$,
    \item ${\mathcal O}_1^{S_2}:={\mathcal O}_0 \cup \{P_{(\beta_1,0)}, \ldots, P_{(\beta_{(q-2)/6,0})}\}$,
    \item ${\mathcal O}_2^{S_2}:={\mathcal O}_\infty \cup \{ P_{(\gamma_1,0)}, \ldots, P_{(\gamma_{(q-2)/6},0)}\}$,
\end{itemize}
where $\{P_{(a,0)}\}$, $\{P_{(\beta_1,0)}, \ldots, P_{(\beta_{(q-2)/6,0})}\}$ and $\{P_{(\gamma_1,0)}, \ldots, P_{(\gamma_{(q-2)/6,0})}\}$ is a suitably chose partition of ${\mathcal O}_m.$
\end{lem}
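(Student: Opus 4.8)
The plan is to combine the already-established fact that $\mathcal{O}$ is an orbit of $\mathrm{Aut}(\mathcal{X}_3)$ (Lemma~\ref{lem:action}) with the $p$-rank zero constraint on Sylow subgroups, exactly as in the proof of Lemma~\ref{lem:Splessthanq}, and then push the analysis a little further. As in that proof, a Sylow $2$-subgroup $S_2$ acts on the set $\mathcal{O}$ of size $q+1$, which is odd; hence $S_2$ has a fixed point, which we may assume to be $P_{(a,0)}$ with $a^m+1=0$. Since $\mathcal{X}_3$ has $p$-rank zero, $S_2$ acts with long orbits on $\mathcal{O}\setminus\{P_{(a,0)}\}$ by \cite[Lemma~11.129]{HKT}, so $|S_2|$ divides $q$ and $|S_2|\le q$. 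Lemma~\ref{lem:Splessthanq} already rules out $|S_2|=q$, so $|S_2|\in\{1,2,4,\dots,q/4\}$ or $|S_2|=q/2$; I need to eliminate every value strictly between $2$ and $q/2$ and also settle the orbit structure in the case $|S_2|=q/2$.

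The key tool for this is the automorphism $\sigma\colon(x,y)\mapsto(x,\delta y)$ of order $q+1$, as used in the proof of Lemma~\ref{lem:tame}: $\sigma$ fixes ${\mathcal O}_m$ pointwise and acts transitively on each of ${\mathcal O}_0$ and ${\mathcal O}_\infty$. First I would argue that a Sylow $2$-subgroup containing $P_{(a,0)}$ can be taken to be normalized by $\sigma$ — more precisely, $\sigma$ normalizes $S_p$ by \cite[Theorem~11.49]{HKT}, so $\sigma$ permutes the $S_2$-orbits on $\mathcal{O}$. Since $\sigma$ fixes ${\mathcal O}_m$ setwise and acts transitively on each of ${\mathcal O}_0,{\mathcal O}_\infty$, any $S_2$-orbit meeting ${\mathcal O}_0$ must contain all of ${\mathcal O}_0$, and likewise for ${\mathcal O}_\infty$; an $S_2$-orbit contained in ${\mathcal O}_m$ is preserved by $\sigma$. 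Hence each $S_2$-orbit is either contained in ${\mathcal O}_m$, or contains ${\mathcal O}_0$ entirely, or contains ${\mathcal O}_\infty$ entirely. Because $S_2$ acts with long (size $|S_2|$) orbits away from the fixed point $P_{(a,0)}$, and $|{\mathcal O}_0|=|{\mathcal O}_\infty|=(q+1)/3$, an orbit containing ${\mathcal O}_0$ has size a multiple of $(q+1)/3$ that divides $q$; combined with $|S_2|<q$ this forces $|S_2|=(q+1)/3$ or gives a contradiction on divisibility, and I would show that in fact the orbit through ${\mathcal O}_0$ must have size exactly $|S_2|$ and that $|S_2|\ge(q+1)/3+1>(q+1)/3$. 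Balancing: the $q+1$ points of $\mathcal{O}$ split as $\{P_{(a,0)}\}$ plus long $S_2$-orbits, so $|S_2|$ divides $q$; the orbits through ${\mathcal O}_0$ and ${\mathcal O}_\infty$ each have size $|S_2|$ and together contain $2(q+1)/3$ of the points, and the remaining $(q-2)/3$ points of ${\mathcal O}_m$ are distributed among further $S_2$-orbits. Counting forces $2|S_2| + (\text{rest}) = q$ with the rest a sum of copies of $|S_2|$, and the only solution consistent with $|S_2|\mid q$, $|S_2|>(q+1)/3$, $|S_2|<q$ is $|S_2|=q/2$, with exactly one orbit through ${\mathcal O}_0$, one through ${\mathcal O}_\infty$, and these two orbits absorbing the whole of ${\mathcal O}_m\setminus\{P_{(a,0)}\}$, split into two halves of size $(q-2)/6$ each. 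This yields precisely the three orbits $\{P_{(a,0)}\}$, ${\mathcal O}_1^{S_2}$, ${\mathcal O}_2^{S_2}$ in the statement, and shows $(q-2)/6\in\mathbb{Z}$, consistent with $q\equiv 2\pmod 3$ and $q$ even.

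The step I expect to be the main obstacle is the combinatorial/divisibility bookkeeping that rules out all intermediate orders $2<|S_2|<q/2$: one has to be careful that "$S_2$ acts with long orbits" only applies away from its (unique) fixed point in each relevant set, and that the orbit through ${\mathcal O}_0$ genuinely has size $|S_2|$ rather than some proper divisor — this requires knowing $S_2$ has a single fixed point on $\mathcal{O}$, which again follows from $p$-rank zero. Once the orbit-size constraints "$|S_2| \mid q$, $|S_2|$ is a common orbit length $> (q+1)/3$" are in place, the elimination is forced because the only divisors of $q=2^n$ exceeding $(q+1)/3$ and smaller than $q$ is $q/2$ alone. A secondary subtlety is justifying that we may indeed choose the Sylow $2$-subgroup (and its fixed point) compatibly with $\sigma$; this is handled exactly as in Lemma~\ref{lem:tame} via \cite[Theorem~11.49]{HKT}, so no new idea is needed there.
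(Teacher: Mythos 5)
Your overall strategy (a fixed point of $S_2$ on the odd-size set $\mathcal{O}$, long orbits from $p$-rank zero, and the normalizing automorphism $\sigma$) is the same as the paper's, but the central combinatorial step is wrong. You claim that because $\sigma$ normalizes $S_2$ and acts transitively on ${\mathcal O}_0$, ``any $S_2$-orbit meeting ${\mathcal O}_0$ must contain all of ${\mathcal O}_0$.'' This does not follow: $\sigma$ only \emph{permutes} the $S_2$-orbits, so the intersections of the $S_2$-orbits with ${\mathcal O}_0$ merely form a block system for $\langle\sigma\rangle$ acting on ${\mathcal O}_0$, i.e.\ equal-size blocks whose common size divides $(q+1)/3$ --- not necessarily all of ${\mathcal O}_0$. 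Your trichotomy cannot be right, because it would force every $S_2$-orbit meeting ${\mathcal O}_0$ to have size at least $(q+1)/3$, hence $|S_2|>(q+1)/3$ and therefore $|S_2|=q/2$ \emph{unconditionally}; this erases the alternative $|S_2|=2$ from the conclusion, and since Lemma~\ref{lem:S2notq2} subsequently rules out $|S_2|=q/2$, your argument would leave $\mathrm{Aut}(\X_3)$ with no Sylow $2$-subgroup at all, which is absurd as $G$ already has order $2(q+1)^2$.

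The correct use of $\sigma$ is the one in the paper: $\sigma$ is guaranteed to stabilize setwise only those $S_2$-orbits that contain a point it fixes, namely the orbits of the points of ${\mathcal O}_m$. Each such orbit, being $\sigma$-invariant, is a union of $\sigma$-orbits and hence is either contained in ${\mathcal O}_m$ or contains ${\mathcal O}_0$ or ${\mathcal O}_\infty$ entirely. The dichotomy is then: if the latter happens for some point of ${\mathcal O}_m$, that orbit has at least $(q+1)/3+1$ elements, so $|S_2|\geq (q+1)/3+1$, and the only power of two strictly between $(q+1)/3$ and $q$ is $q/2$ (your orbit count for this case, giving the two orbits ${\mathcal O}_1^{S_2}$ and ${\mathcal O}_2^{S_2}$ with $(q-2)/6$ extra points each, is then fine); if it never happens, then ${\mathcal O}_0\cup{\mathcal O}_\infty$ is a disjoint union of long $S_2$-orbits, so $|S_2|$ divides $2(q+1)/3=2\cdot(\text{odd})$ and hence $|S_2|=2$. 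Your proposal is missing this second branch entirely, and it is the branch that actually occurs.
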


\begin{proof}
Let $S_2$ be of order $2^i$ for some $i \geq 1$. Just as in the proof of Lemma \ref{lem:tame}, we may assume that $S_2$ fixes $P \in \mathcal{O}$, where $P=P_{(a,0)}$, for some $a$ such that $a^m+1=0$ and that $S_2$ acts with long orbits on $\mathcal{O} \setminus \{P_{(a,0)}\}$. Further by Lemma \ref{lem:Splessthanq}, we may assume that $|S_2|<q$.

Recall that the automorphism $\sigma: (x,y) \mapsto (x,\delta y)$, where $\delta$ is a primitive $(q+1)$-th root of unity, fixes $P_{(a,0)}$ and hence normalizes $S_2$, from \cite[Theorem 11.49]{HKT}. Moreover, $\sigma$ fixes the set ${\mathcal O}_m$ point-wise, while it acts transitively on ${\mathcal O}_0$ and ${\mathcal O}_\infty$. This means that $\sigma$ preserves the orbit of $S_2$ containing $P_{(a,0)}$, for $a$ such that $a^m+1=0$. We have thus two possibilities for a fixed $a$: either the orbit of $S_2$ containing $P_{(a,0)}$ is contained in ${\mathcal O}_m$, or it contains entirely either ${\mathcal O}_0$ or ${\mathcal O}_\infty$.

If the second case never occurs, then $S_2$ acts semiregularly on ${\mathcal O}_0 \cup {\mathcal O}_\infty$, which is a set of cardinality $2(q+1)/3$. This implies that $|S_2|=2$. If the second case occurs for some $a$, then we get that $|S_2| \geq (q+1)/3+1$ and hence $|S_2|=q/2$. Note that in this case the only possible configuration of orbits of $S_2$ acting on the $q$ points in ${\mathcal O}_m \setminus \{P_{(a,0)}\}$ is that $S_2$ has exactly $2$ orbits of length $q/2$: one ${\mathcal O}_1^{S_2}$ containing ${\mathcal O}_0$ and $(q-2)/6$ points of ${\mathcal O}_m$, and another one ${\mathcal O}_2^{S_2}$ containing ${\mathcal O}_\infty$ and the remaining $(q-2)/6$ points in ${\mathcal O}_m$.
\end{proof}

We now exclude the second case in Lemma \ref{lem:ordersylow}.

\begin{lem}\label{lem:S2notq2}
The case $|S_2|=q/2$ cannot occur.
\end{lem}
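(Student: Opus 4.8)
The plan is to pit the explicit involution $\theta_2\in G$ of Lemma~\ref{lem:someauto} against the precise orbit structure on $\mathcal{O}$ recorded in Lemma~\ref{lem:ordersylow}. First I would normalise the fixed point of $S_2$: since $\mathcal{X}_3$ has $p$-rank zero, the $2$-group $S_2$ has a unique fixed point on $\mathcal{X}_3$, and by Lemma~\ref{lem:action} this point lies in $\mathcal{O}$. As $\mathcal{O}$ is a single $\mathrm{Aut}(\mathcal{X}_3)$-orbit, replacing $S_2$ by a conjugate (which does not change $|S_2|$) I may assume that this fixed point is $P_{(1,0)}$; here $1^m+1=0$ because $q$ is even, and $(1,0)$ is a smooth point of the plane model, so $P_{(1,0)}\in{\mathcal O}_m$ is a well-defined point of $\mathcal{X}_3$. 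Lemma~\ref{lem:ordersylow} then applies to this $S_2$ and tells us that its orbits on $\mathcal{O}$ are $\{P_{(1,0)}\}$, an orbit ${\mathcal O}_1^{S_2}\supseteq{\mathcal O}_0$, and an orbit ${\mathcal O}_2^{S_2}\supseteq{\mathcal O}_\infty$, with ${\mathcal O}_1^{S_2}\cap{\mathcal O}_2^{S_2}=\emptyset$.

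Next I would show that $\theta_2\in S_2$. From $\theta_2(x,y)=(1/x,y/x)$ we get $\theta_2(P_{(1,0)})=P_{(1,0)}$, so $\theta_2$ lies in the stabiliser $\mathrm{Aut}(\mathcal{X}_3)_{P_{(1,0)}}$ and has order $2=p$. By \cite[Theorem 11.49]{HKT} the stabiliser of a point has a normal Sylow $p$-subgroup; since $S_2\subseteq\mathrm{Aut}(\mathcal{X}_3)_{P_{(1,0)}}$ is a Sylow $2$-subgroup of $\mathrm{Aut}(\mathcal{X}_3)$, it is also one of the stabiliser, and hence coincides with that unique normal subgroup. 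Consequently every element of $2$-power order in the stabiliser, in particular $\theta_2$, belongs to $S_2$.

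Finally I would read off the contradiction. By equation~\eqref{eq:divs} the automorphism $\theta_2$ maps the divisor of $x$ to the divisor of $1/x$, hence sends a point with $x$-coordinate $0$ to one with $x$-coordinate $\infty$ and vice versa; in other words $\theta_2({\mathcal O}_0)={\mathcal O}_\infty$. Choosing $R\in{\mathcal O}_0\subseteq{\mathcal O}_1^{S_2}$, the fact that $\theta_2\in S_2$ forces $\theta_2(R)\in{\mathcal O}_1^{S_2}$, while at the same time $\theta_2(R)\in{\mathcal O}_\infty\subseteq{\mathcal O}_2^{S_2}$; this contradicts ${\mathcal O}_1^{S_2}\cap{\mathcal O}_2^{S_2}=\emptyset$. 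The one step that needs genuine care is the reduction in the first paragraph: one must make sure that Lemma~\ref{lem:ordersylow} is formulated for an \emph{arbitrary} Sylow $2$-subgroup of order $q/2$ fixing a point of the shape $P_{(a,0)}$ with $a^m+1=0$, so that it may legitimately be applied to the conjugate chosen there (and note this is exactly what fails for $q=8$, where the single non-trivial orbit of $S_2$ on $\mathcal{O}$ contains both ${\mathcal O}_0$ and ${\mathcal O}_\infty$); the remaining steps are routine group theory.
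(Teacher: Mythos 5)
Your proof is correct, and it takes a genuinely different route from the paper's. Both arguments start from the orbit decomposition of Lemma \ref{lem:ordersylow}, but the paper stays inside $S_2$ and $C_{q+1}$: it picks $\gamma \in S_2$ moving $P_{(\beta_1,0)}$ to $P_{(\beta_2,0)}$, shows that $\gamma$ commutes with $\sigma$, extracts an involution $\iota$ from $\gamma$, and uses the oddness of $(q-2)/6$ to force $\iota$ to have a second fixed point on ${\mathcal O}_1^{S_2} \cap {\mathcal O}_m$, contradicting the zero $p$-rank of $\X_3$ via \cite[Lemma 11.129]{HKT}. You instead import the explicit involution $\theta_2$ of Lemma \ref{lem:someauto}: after conjugating $S_2$ so that its unique fixed point is $P_{(1,0)}$ (legitimate, since $\mathcal{O}$ is a single $\mathrm{Aut}(\mathcal{X}_3)$-orbit by Lemma \ref{lem:action}, and $1^m+1=0$ in characteristic two so $P_{(1,0)} \in {\mathcal O}_m$), the normality of the Sylow $p$-subgroup of a point stabilizer (\cite[Theorem 11.49]{HKT}) forces $\theta_2 \in S_2$, and $\theta_2({\mathcal O}_0)={\mathcal O}_\infty$ then clashes with ${\mathcal O}_0$ and ${\mathcal O}_\infty$ lying in distinct $S_2$-orbits. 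Your version is shorter, avoids both the commutation computation and the parity count, and Lemma \ref{lem:ordersylow} is indeed stated for an arbitrary Sylow $2$-subgroup fixing a point of ${\mathcal O}_m$, so the reduction you flag as the delicate step is sound; the paper's version has the minor virtue of not needing to normalize the fixed point to the one place of ${\mathcal O}_m$ that $\theta_2$ actually fixes. One small quibble: your parenthetical about $q=8$ is slightly off --- there the relevant failure is that $|S_2|=q$ rather than $q/2$ (Lemma \ref{lem:Splessthanq} requires $q\ge 11$), not that the orbit structure of Lemma \ref{lem:ordersylow} degenerates --- but this aside carries no weight in your argument.
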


\begin{proof}
Suppose by contradiction $|S_2|=q/2$. With notation as in Lemma \ref{lem:ordersylow}, we can assume that $S_2$ acts on $\mathcal{O}$ with three orbits $\{P_{(a,0)}\}$, ${\mathcal O}_1^{S_2}$ and ${\mathcal O}_2^{S_2}\}$.  The cyclic group $C_{q+1}$, generated by $\sigma: (x,y) \mapsto (x,\delta y)$, where $\delta$ is a primitive $(q+1)$-th root of unity, fixes any point in ${\mathcal O}_m$, in particular $P_{(a,0)}$, and hence normalizes $S_2$. In particular, the group $L$ generated by $\sigma$ and the elements of $S_2$ has $|S_2| (q+1)$ many elements. Since the stabilizer of two points is tame and cyclic, we conclude that $C_{q+1}$ is the two points stabilizer of the points $P_{(a,0)}$ and any other $P \in {\mathcal O}_m.$

Using the notation from Lemma \ref{lem:ordersylow}, choose $\gamma \in S_2$ be such that $\gamma(P_{(\beta_1,0)})=P_{(\beta_2,0)}$, with $P_{(\beta_1,0)},P_{(\beta_2,0)} \in {\mathcal O}_1^{S_2}$ distinct. Such a $\gamma$ exists, since $P_{(\beta_1,0)}$ and $P_{(\beta_2,0)}$ are in the same orbit under the action of $S_2$. Then $\gamma^{-1} \cdot \sigma \cdot \gamma$ fixes $P_{(a,0)}$ and
$\gamma^{-1} \cdot \sigma \cdot \gamma(P_{(\beta_1,0)})=\gamma^{-1} \cdot \sigma(P_{(\beta_2,0)})=\gamma^{-1}(P_{(\beta_2,0)})=P_{(\beta_1,0)}$. Hence, $\gamma^{-1} \cdot \sigma \cdot \gamma$ is an element of order $q+1$ fixing both $P_{(a,0)}$ and $P_{(\beta_1,0)}$. Hence $\gamma^{-1} \cdot \sigma \cdot \gamma \in C_{q+1}$ and more specifically $\gamma^{-1} \cdot \sigma \cdot \gamma=\sigma^k$, where $(k,q+1)=1$. Moreover, since $C_{q+1}$ normalizes $S_2$, there exists $\tilde{\gamma} \in S_2$ such that $\sigma \cdot \gamma=\tilde{\gamma} \cdot \sigma$. Hence $\mathrm{id}=\gamma^{-1} \cdot \sigma \cdot \gamma \cdot \sigma^{-k} = \gamma^{-1}\tilde{\gamma}  \cdot  \sigma^{1-k}$. Since $S_2 \cap C_{q+1}=\{\mathrm{id}\}$, this implies that $k=1$ and hence that $\gamma$ and $\sigma$ commute.

Now let $\iota$ be a suitable power of $\gamma$ such that $\iota$ has order two. Then for any $P_{(\beta_j,0)} \in {\mathcal O}_1^{S_2}$, we have $\iota(P_{(\beta_j,0)}) \in {\mathcal O}_1^{S_2}$, since $S_2$ acts on ${\mathcal O}_1^{S_2}$. On the other hand, using that $\sigma$ and $\iota$ commute and that $\sigma$ fixes all points in ${\mathcal O}_m$, we have $\sigma \cdot \iota (P_{(\beta_j,0)})=\iota \cdot \sigma (P_{(\beta_j,0)})=\iota(P_{(\beta_j,0)}).$ Hence $\iota(P_{(\beta_j,0)})$ is a fixed point of $\sigma$, which implies that $\iota(P_{(\beta_j,0)}) \in {\mathcal O}_m$. We conclude that $\iota(P_{(\beta_j,0)}) \in {\mathcal O}_1^{S_2} \cap {\mathcal O}_m=\{P_{(\beta_1,0)},\dots,P_{(\beta_{(q-2)/6},0)}\}$. In other words: $\iota$ acts on $\{P_{(\beta_1,0)},\dots,P_{(\beta_{(q-2)/6},0)}\}$. Since $(q-2)/6=(q/2+1)/3$ is an odd number, this implies that $\iota$ has, apart from $P_{(a,0)}$, at least one more fixed point. However, since the characteristic is two, this is impossible according to \cite[Lemma 11.129]{HKT}.
\end{proof}

We are now ready to compute $\mathrm{Aut}(\mathcal{X}_3)$ when $q$ is even.

\begin{thm}\label{thm:fullaut:qeven}
Let $q=2^n$, $n \geq 5$ odd. Then $\mathrm{Aut}(\mathcal{X}_3)=G$.
\end{thm}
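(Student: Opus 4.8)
The plan is to follow closely the argument of Theorem \ref{thm:fullaut:qodd}, the only genuinely new point being that in even characteristic the point stabilizers of $\mathrm{Aut}(\X_3)$ need not be cyclic, so the step ``$\gamma$ commutes with $\sigma$'' has to be justified differently. First I would recall that $G\le\mathrm{Aut}(\X_3)$ with $|G|=2(q+1)^2$ by Lemma \ref{lem:someauto}, and that $\mathcal{O}$ is a common orbit of $G$ (Corollary \ref{cor:orbitO}) and of $\mathrm{Aut}(\X_3)$ (Lemma \ref{lem:action}). Arguing by contradiction, I would assume $|\mathrm{Aut}(\X_3)|>|G|$; fixing $P_{(a,0)}\in{\mathcal O}_m$ (so $a^m+1=0$), the orbit--stabilizer theorem gives $|G_{P_{(a,0)}}|=2(q+1)$ while $|\mathrm{Aut}(\X_3)_{P_{(a,0)}}|>2(q+1)$, so there is some $\gamma\in\mathrm{Aut}(\X_3)_{P_{(a,0)}}\setminus G_{P_{(a,0)}}$.

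The preparatory step, which I expect to be the main obstacle, is to pin down the structure of $\mathrm{Aut}(\X_3)_{P_{(a,0)}}$. By the general description of point stabilizers in positive characteristic (\cite{HKT}*{Theorem 11.49}), this group has a normal Sylow $2$-subgroup $S$ with cyclic quotient of odd order. Since $G_{P_{(a,0)}}\le\mathrm{Aut}(\X_3)_{P_{(a,0)}}$ has even order, and since a Sylow $2$-subgroup of $\mathrm{Aut}(\X_3)$ has order $2$ by Lemmas \ref{lem:ordersylow} and \ref{lem:S2notq2}, I get $|S|=2$; as $\mathrm{Aut}(C_2)$ is trivial, $S$ is central, so $\mathrm{Aut}(\X_3)_{P_{(a,0)}}=S\times C$ with $C$ cyclic of odd order. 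The automorphism $\sigma\colon(x,y)\mapsto(x,\delta y)$ ($\delta$ a primitive $(q+1)$-th root of unity) fixes $P_{(a,0)}$ and has odd order $q+1$, hence lies in $C$; thus $C_{q+1}:=\langle\sigma\rangle\le C$, and in particular $\gamma$ commutes with $\sigma$. This bookkeeping (separating the $2$-part from the $2'$-part of the stabilizer and forcing $\gamma$ to centralise $\sigma$) is the one place where the even case differs substantively from the odd one; everything afterwards is a transcription of the odd-case argument.

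From here I would run the geometric argument of Theorem \ref{thm:fullaut:qodd} unchanged. Since $\gamma$ commutes with $\sigma$, it permutes the fixed locus ${\mathcal O}_m$ of $C_{q+1}$ and, because $\mathcal{O}$ is $\gamma$-invariant, also ${\mathcal O}_0\cup{\mathcal O}_\infty$; as ${\mathcal O}_0$ and ${\mathcal O}_\infty$ are the two $C_{q+1}$-orbits on this set, $\gamma$ either fixes each of them setwise or swaps them. Comparing with the divisor of $x$ in equation \eqref{eq:divs}, this yields $\gamma(x)=\lambda x$ or $\gamma(x)=\lambda/x$ for a constant $\lambda$; since $x=u^3$ and $\overline{\F}_{q^2}$ is algebraically closed, in either case $\gamma(x)$ is a cube in $\overline{\F}_{q^2}(\mathcal{H})$. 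Then Lemma \ref{lem:lift} lifts $\gamma$ to $\bar\gamma\in\mathrm{Aut}(\mathcal{H})$ (automatically defined over $\F_{q^2}$), and because $\bar\gamma$ lies over $\gamma$ it permutes the three lines cut on $\mathcal{H}$ by the $3(q+1)$ points above $\mathcal{O}$, which form a self-polar triangle; hence $\bar\gamma$ lies in the stabilizer $N(\langle\tau\rangle)$ of that triangle in $\PGU(3,q)$ (\cite{HKT}*{Theorem A.10}). Therefore $\gamma$ lies in the image $N(\langle\tau\rangle)/\langle\tau\rangle=G$, so $\gamma\in G\cap\mathrm{Aut}(\X_3)_{P_{(a,0)}}=G_{P_{(a,0)}}$, contradicting the choice of $\gamma$. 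Consequently $|\mathrm{Aut}(\X_3)|=|G|$, and since $G\le\mathrm{Aut}(\X_3)$ we conclude $\mathrm{Aut}(\X_3)=G$.
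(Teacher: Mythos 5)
Your proposal is correct and follows essentially the same route as the paper: reduce to $|S_2|=2$ via Lemmas \ref{lem:ordersylow} and \ref{lem:S2notq2}, produce an extra element $\gamma$ of the stabilizer of $P_{(a,0)}$, show it commutes with $\sigma$ using the structure of point stabilizers from \cite{HKT}*{Theorem 11.49}, and then rerun the divisor-of-$x$ and lifting argument of Theorem \ref{thm:fullaut:qodd}. Your explicit decomposition of the stabilizer as $S\times C$ is just a slightly more detailed rendering of the paper's remark that the tame part of the stabilizer is cyclic (the paper instead takes $\gamma$ of odd order), so the two arguments coincide in substance.
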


\begin{proof}
Combining Lemmas \ref{lem:ordersylow} and \ref{lem:S2notq2}, we conclude that $|S_2|=2$. Suppose by contradiction that $|\mathrm{Aut}(\mathcal{X}_3)|>|G|$. Let $G_{P_{(a,0)}}$ be the stabilizer in G of $P_{(a,0)}$, for $a$ such that $a^m+1=0$. Since, by the orbit-stabilizer theorem, $|G|=|\mathcal{O}||G_{P_{(a,0)}}|$ and, by Lemma \ref{lem:action}, $\mathcal{O}$ is an orbit of $\mathrm{Aut}(\mathcal{X}_3)$, the stabilizer $\mathrm{Aut}(\mathcal{X}_3)_{P_{(a,0)}}$ of $P_{(a,0)}$ in $\mathrm{Aut}(\X_3)$ contains some extra automorphism $\gamma \not\in G_{P_{(a,0)}}$. Also, since $|S_2|=2$ and $|G|=2(q+1)^2$, $\gamma$ can be assumed to be of odd order.

Let $C_{q+1}$ be the cyclic group generated by $\sigma: (x,y) \mapsto (x,\delta y)$, where $\delta$ is a primitive $(q+1)$-th root of unity. Then, since the tame part of $\mathrm{Aut}(\mathcal{X}_3)_{P_{(a,0)}}$ is cyclic, $\gamma$ commutes with $C_{q+1}$ and hence it acts on its fixed points (and, in general, orbits). At this point, the remainder of the proof is exactly the same as the proof of Theorem \ref{thm:fullaut:qodd}.
\end{proof}



\end{document}